\DeclareFontFamily{U}{mathx}{}
\DeclareFontShape{U}{mathx}{m}{n}{<-> mathx10}{}
\DeclareSymbolFont{mathx}{U}{mathx}{m}{n}
\DeclareMathAccent{\widehat}{0}{mathx}{"70}
\DeclareMathAccent{\widecheck}{0}{mathx}{"71}
\newcommand*\bigcdot{\mathpalette\bigcdot@{.5}}
\newcommand*\bigcdot@[2]{\mathbin{\vcenter{\hbox{\scalebox{#2}{$\m@th#1\bullet$}}}}}
\newtheorem{theorem}{Theorem}[section]
\newtheorem{lemma}[theorem]{Lemma}
\newtheorem{proposition}[theorem]{Proposition}
\newtheorem{corollary}[theorem]{Corollary}
\newtheorem{algorithm}[theorem]{Algorithm}
\declaretheorem[style=definition,qed=$\vartriangle$,sibling=theorem]{example}
\declaretheorem[style=remark,qed=$\vartriangle$,sibling=theorem]{remark}
\numberwithin{equation}{section}
\newcommand{\NN}{N}
\newcommand{\R}{\mathbb R}
\newcommand{\N}{\mathbb N}
\newcommand{\cF}{\mathcal F}
\newcommand{\identity}{\mathrm{Id}}
\DeclareMathOperator*{\argmin}{argmin}
\DeclareMathOperator{\Span}{span}
\newcommand*\diff{\mathop{}\!\mathrm{d}}
\newcommand{\nrm}{| \! | \! |}
\newcommand{\be}{\begin{equation}}
\newcommand{\ee}{\end{equation}}
\newcommand{\uumlaut}{{\"u}}
\newenvironment{algotab}%
{\par\begin{samepage}%
\begin{tabbing}\ttfamily%
 \hspace*{5mm}\=\hspace{3ex}\=\hspace{3ex}\=\hspace{3ex}\=\hspace{3ex}%
\=\hspace{3ex}\=\hspace{3ex}\=\hspace{3ex}\=\hspace{3ex}\kill}%
{\end{tabbing}\end{samepage}}
\newcounter{ccondition}
\renewcommand{\theequation}{\temp}%
\newcounter{mylistcounter}
\renewcommand{\themylistcounter}{(\roman{mylistcounter})}
\title[Quasi-optimal time-space discretizations for nonlinear parabolic PDEs]{Quasi-optimal time-space discretizations for a class of nonlinear parabolic PDEs}
\date{\today}
\author[N.\ Beranek]{Nina Beranek}
\author[R.K.H.\ Smeets]{Robin Smeets}
\author[R.P.\ Stevenson]{Rob Stevenson}
\address[N.\ Beranek]{Ulm University, Institute for Numerical Mathematics,
Helmholtzstr. 20, 89081 Ulm, Germany}
\email{nina.beranek@uni-ulm.de}
\address[R.K.H.\ Smeets and R.P.\ Stevenson]{Korteweg-de Vries (KdV) Institute for Mathematics, University of Amsterdam, PO Box 94248, 1090 GE Amsterdam, The Netherlands}
\email{r.k.h.smeets@uva.nl and r.p.stevenson@uva.nl}
\thanks{The work of Nina Beranek was supported by the Office for Gender Equality Ulm University.}
\subjclass[2020]{
35A15, 
35B35, 
35K90, 
65M12, 
65M15, 
65M60
}
\keywords{Nonlinear parabolic PDEs, simultaneous time-space variational formulation, quasi-best approximation, uniform inf-sup stability, adaptive stabilization, inexact Uzawa iteration}
\begin{document}

\begin{abstract}
We consider parabolic evolution equations with Lipschitz continuous and strongly monotone spatial operators.
By introducing an additional variable, we construct an equivalent system where the operator is a Lipschitz continuous mapping from a Hilbert space $Y \times X$ to its dual, with a Lipschitz continuous inverse. Resulting Galerkin discretizations can be solved with an inexact Uzawa type algorithm.
Quasi-optimality of the Galerkin approximations is guaranteed under an inf-sup condition on the selected `test' and `trial' subspaces of $Y$ and $X$.
To circumvent the restriction imposed by this inf-sup condition, an a posteriori condition for quasi-optimality is developed that is shown to be satisfied whenever the test space is sufficiently large.
 \end{abstract}
\maketitle

\section{Introduction}
In recent years one witnesses a growing interest in time-space variational formulations of parabolic initial value problems (IVPs) as the basis for monolitic numerical solvers.
They enable to compute numerical approximations that are quasi-best without regularity assumptions on the solution \cite{11,249.992}, they lead to systems that have excellent scalability properties for parallel implementations \cite{169.06,306.65,169.065}, and they naturally allow adaptive local mesh refinements simultaneously in time and space.
At the downside, monolitic methods require more memory than time-stepping methods. This can be mitigated by using (adaptive) sparse tensor product approximation allowed by the Cartesian product structure of the time-space cylinder \cite{11,249.991}. The disadvantage of the larger memory requirement vanishes for applications where the forward parabolic problem is coupled with an adjoint backward parabolic problem, as with problems of optimal control \cite{169.053,19.96,75.292,75.2576}, or with goal-oriented error minimization \cite{70.48}.
Another interesting application of time-space methods is in reduced basis methods for parameter-dependent problems where, other than with time-stepping methods, they reduce 
complexity both in space \emph{and} time \cite{75.528,75.292}.

For \emph{linear} parabolic PDEs, where with a Gelfand triple $V \hookrightarrow H \simeq H' \hookrightarrow V'$,
for each time $t$ the linear elliptic spatial PDO (partial differential operator) is a mapping $V \rightarrow V'$, the commonly used time-space variational formulation writes the pair of the parabolic PDO and the initial trace operator 
as a boundedly invertible mapping $B_e \colon X \rightarrow Y' \times H$. Here $X:=L_2((0,T);V) \cap H^1((0,T);V')$ and $Y:=L_2((0,T);V)$. Well-posedness of the IVP can also be demonstrated for other choices of the spaces $X$ and $Y$, \cite{247.155,249.3}, or for its formulation as a first-order system \cite{75.257,75.28}, both with interesting numerical approximation schemes resulting from that.

For a \emph{nonlinear} parabolic PDE, where for each $t$ the spatial PDO is a Lipschitz continuous and strongly monotone mapping $V \rightarrow V'$ ,
\cite{318.3} shows that $B_e$ is an invertible mapping $X \rightarrow Y' \times H$. This result is shown under even relaxed assumptions.
To the best of our knowledge, however, any sort of stability of $B_e^{-1}$, essential for the development of numerical approximations, has not been demonstrated.
Under the conditions of strong monotonicity and Lipschitz continuity of the spatial PDO, in the current work it is shown that $B_e^{-1}$ is Lipschitz continuous.

In view of finding a formulation amenable to Galerkin discretizations, by introducing a secondary variable $\theta$ besides the solution $u$, we show that the IVP can be written as a $2 \times 2$ system of equations for $(\theta,u) \in Y \times X$, where both the operator $Y \times X \rightarrow Y' \times X'$
and its inverse are Lipschitz continuous. 
Our system formulation has been inspired by the application of the Br\'{e}zis-Ekeland-Nayroles variational principle \cite{35.81,233.5}, where we avoid the condition that the spatial PDO has a potential.

For finite dimensional subspaces $Y^\delta \subset Y$ and $X^\delta \subset X$, the latter equipped with a $Y^\delta$-dependent norm,
we show that the operator $Y^\delta \times X^\delta \rightarrow (Y^\delta)' \times (X^\delta)'$ resulting from a Galerkin discretization
 is Lipschitz continuous with a Lipschitz continuous inverse, with constants independent of $Y^\delta$ and $X^\delta$.
For pairs $X^\delta \subset Y^\delta$ that satisfy an inf-sup condition, uniformly over a family of such pairs, this shows that the second component of the Galerkin solution $(\theta^\delta,u^\delta) \in Y^\delta \times X^\delta$ is a quasi-best approximation from $X^\delta$ to the solution of the IVP w.r.t.~the norm on $X$.

This uniform inf-sup condition has been verified, with $\dim Y^\delta$ of the order of $\dim X^\delta$, for $Y^\delta$ and $X^\delta$ tensor products of function spaces of the temporal or spatial variables, or more generally, when this tensor product constraint holds true in each time-slab from an arbitrary partition of the time-interval.

The Galerkin approximation $(\theta^\delta,u^\delta)$ is the solution of a system of nonlinear equations that need to be solved.
We develop an Uzawa type algorithm, known for the iterative solution of linear saddle-point system, where 
the Lipschitz continuous and strongly monotone problems in the inner and outer loop are approximately solved by Zarantonello iterations, and demonstrate its R-linear convergence, uniformly in $Y^\delta$ and $X^\delta$.

Finally, although the setting of tensor product approximation per time-slab covers finite element spaces w.r.t. non-uniform, and different partitions in each time slab, it does not allow partitions that result from `local time stepping'.
In order to include fully general (finite element) spaces $X^\delta \subset Y^\delta$, and so to enable local (adaptive) mesh refinements,
inspired by an idea from \cite{45.44} we derive an a posteriori condition for quasi-optimality of the $u^\delta$ component of the Galerkin solution. This condition is shown to be satisfied whenever $Y^\delta$ is sufficiently large, which given an $X^\delta$, suggests an (adaptive) enrichment of $Y^\delta$ until the condition holds true. Validity of the condition moreover guarantees that an a posteriori computable estimator for $\|u-u^\delta\|_X$ is efficient and reliable, which suggests an outer (adaptive) enrichment of $X^\delta$.

A computational verification of the a posteriori condition requires an a posteriori error estimator for the error in a Galerkin approximation of the solution of an equation with an
Lipschitz continuous and strongly monotone operator $Y \rightarrow Y'$. 
Such an estimator and numerical experiments with the double adaptivity loop will be presented in forthcoming work.

\subsection{Organization} In Sect.~\ref{sec:2} we write the IVP in time-space variational form.
Some basic facts about Lipschitz continuous and strongly monotone operators are recalled in Sect.~\ref{sec:LipMono}.
In Sect.~\ref{sec:4} we write the IVP as an equivalent system, and show that the corresponding operator, as well as those corresponding to the Galerkin discretizations, are Lipschitz continuous with a Lipschitz continuous inverse. In Sect.~\ref{sec:5} it is shown that under an inf-sup condition the Galerkin approximations are quasi-optimal. Situations are recalled in which this inf-sup condition has been demonstrated. An alternative a posteriori condition for quasi-optimality is derived in Sect.~\ref{sec:Pjotr}. An inexact Uzawa algorithm for the iterative solution of the Galerkin systems is presented in Sect.~\ref{sec:7}. Conclusions and an outlook are given in Sect.~\ref{sec:8}.
For $Y^\delta$ and $X^\delta$ of tensor product form, in Appendix~\ref{sec:precond} a construction is given of optimal preconditioners that provide a cheap alternative for the application of the inverse Riesz operators in the Uzawa algorithm.

\subsection{Notations} For our convenience, we restrict ourselves to linear spaces over $\R$.
We equip any Cartesian product of normed spaces ${\mathcal X}$ and ${\mathcal Y}$ with $\|(x,y)\|:=\|x\|_{\mathcal X}+\|y\|_{\mathcal Y}$.
By $C \lesssim D$ we will mean that $C\ge0$ can be bounded by a multiple of $D\ge0$, independently of parameters on which $C$ and $ D$ may depend, in particular the discretization index $\delta$.
Obviously, $C \gtrsim D$ is defined as $D \lesssim C$, and $C\eqsim D$ as $C\lesssim D$ and $C \gtrsim D$.
For a Hilbert space $K$, $R_K\colon K \rightarrow K'$ will denote the Riesz isometry defined by $(R_K v)(w)=\langle w,v\rangle_K$.

\section{First-order evolution by Lipschitz continuous and strongly monotone operators} \label{sec:2}

For a Gelfand triple $V \hookrightarrow H \simeq H' \hookrightarrow V'$, and a $T>0$, with $\Xi:=(0,T)$ we consider the \emph{Initial Value Problem} (IVP) of finding $u\colon \Xi \rightarrow V$ such that
\be \label{eq:IVP}
\left\{
\begin{array}{rcll}
\tfrac{\diff u}{\diff t} (t)+A(t)u(t) & = & \ell(t) & \text{a.e. }t \in \Xi  , \\
u(0) & = & u_0,
\end{array}
\right.
\ee
where $\ell(t) \in V'$, $A(t)\colon V \rightarrow V'$, and $u_0 \in H$.

The basic example for \emph{linear} $A(t)$ is given next. 
\begin{example}[Heat equation] \label{ex:simple} For a domain $\Omega \subset \R^d$, let $H:=L_2(\Omega)$, $V:=H^1_0(\Omega)$, and let 
$(A(t) \eta)(\xi):=\int_\Omega \nabla \eta(x) \bigcdot \nabla \xi(x) \diff x \quad (\eta,\xi \in H^1_0(\Omega))$.
\end{example}

Our main interest is in cases where $A(t)$ is \emph{nonlinear}.
We consider the IVP in \emph{operator}, or \emph{time-space variational form}: We set
$$
Y:=L_2(\Xi;V),
$$
$A\colon Y \rightarrow Y'$ by $(Aw)(v):=\int_\Xi (A(t)w(t))(v(t))\diff t$,
$$
X:=L_2(\Xi;V) \cap H^1(\Xi;V'),
$$
and $\diff_t\colon X \rightarrow Y'$ by $(\diff_t w)(v):=\int_\Xi (\tfrac{\diff w}{\diff t} (t))(v(t)) \diff t$.  It is known that $X \hookrightarrow C(\overline{\Xi},H)$, so that with the trace operator $\gamma_t \colon X \rightarrow H\colon w \mapsto w(t)$,
\be \label{eq:embedding}
C_{\Xi}:=\sup_{t \in \overline{\Xi}} \sup_{0 \neq w \in X} \frac{\|\gamma_t w\|_H}{\sqrt{\|w\|_Y^2+\|\diff_t w\|_{Y'}^2}}<\infty.
\ee

For smooth $u, w \in X$,
$$
(\diff_t u)(v)+(\diff_t v)(u)=\int_\Xi \tfrac{\diff}{\diff t} \langle u(t),v(t)\rangle_H \diff t=\langle u(T),v(T)\rangle_H-\langle u(0),v(0)\rangle_H,
$$
so that, by a density argument,
\be \label{eq:34}
\diff_t+\diff_t'+\gamma_0' \gamma_0=\gamma_T' \gamma_T
\ee
as mappings $X \rightarrow X'$.

Given data $(\ell,u_0) \in Y' \times H$, our IVP reads as finding $u \in X$ such that
$$
\left\{
\begin{array}{rcll}
(Bu)(v):=(\diff_t u)(v)+(Au)(v)&=&\ell(v) & (v \in Y),\\
\gamma_0 u &=& u_0,
\end{array}
\right.
$$
i.e., as
\be \label{eq:33}
B_e u:=\left[\begin{array}{@{}c@{}} B \\ \gamma_0 \end{array}\right] u=\left[\begin{array}{@{}c@{}} \ell \\ u_0\end{array}\right].
\ee

We will consider operators $A\colon Y \rightarrow Y'$ that are \emph{Lipschitz continuous} and \emph{strongly monotone}.
For such operators it is known that \eqref{eq:33} has a unique solution \cite[Theorem 30.A.]{318.3}.\footnote{This holds even true for $A$ that are only monotone, hemicontinuous, coercive, and bounded.} In the forthcoming Corollary~\ref{corol:Lipstab} it will be shown that this solution depends Lipschitz continuously on the data.

Our leading example is the following.

\begin{example}[Quasi-linear parabolic PDE] \label{ex:quasi}
As in Example~\ref{ex:simple}, let $H:=L_2(\Omega)$, $V:=H^1_0(\Omega)$, the latter equipped with $\|\nabla \bigcdot\|_{L_2(\Omega)^d}$, so that $Y:=L_2(\Xi;H^1_0(\Omega))$ is equipped with $\|\bigcdot\|_Y:=\|\nabla_x \bigcdot\|_{L_2(\Sigma)^d}$, where $\Sigma:=\Xi \times \Omega$. 

Let for a.e. $t \in \Xi$, $A(t)$ be a quasi-linear elliptic spatial operator of second-order of the form
$$
(A(t) \eta)(\xi)=\int_\Omega \mu(t,x,|\nabla \eta(x)|^2) \nabla \eta(x) \bigcdot \nabla \xi(x) \diff x \quad (\eta,\xi \in H^1_0(\Omega)),
$$
where $t \mapsto (A(t) \eta)(\xi)$ is measurable, and for some constants $m_\mu, M_\mu>0$,
$$
m_\mu (r-s) \leq \mu(\bigcdot,\bigcdot,r^2) r-\mu(\bigcdot,\bigcdot,s^2) s \leq M_\mu(r-s)\quad (r \geq s \geq 0).
$$
Then  $A(t) \colon V \rightarrow V'$ for a.e. $t \in \Xi$,  and so $A \colon Y \rightarrow Y'$, are Lipschitz continuous and strongly monotone
 with constants $L_A=3 M_\mu$ and $m_A=m_\mu$ \cite[Lemma 25.26]{318.3}. 
\end{example}

\section{Lipschitz continuous and strongly monotone operators} \label{sec:LipMono}

Since they will be used frequently, we recall some basic facts about Lipschitz continuous and strongly monotone operators.

For a Hilbert space $Z$ (for convenience over $\R$), let $G\colon Z \rightarrow Z'$ be such that for some constants $L_G, m_G>0$, $G$ is \emph{Lipschitz continuous}, i.e.,
\begin{align}  \label{eq:Lip}
\|G w-G v\|_{Z'} & \leq L_G  \|w-v\|_Z \quad (w,v\in Z), 
\intertext{and \emph{strongly monotone}, i.e.,} \label{eq:mono}
(G w-G v)(w-v) &\geq m_G \|w-v\|_Z^2 \quad (w,v \in Z).
\end{align}

Setting $T:=R_Z^{-1}G$, where $R_Z\colon Z \rightarrow Z'$ is the \emph{Riesz map}, from \eqref{eq:Lip}-\eqref{eq:mono} it follows that for $T_\zeta:=\identity -\zeta T$,
$$
\|T_\zeta w-T_\zeta v\|^2_Z \leq (1-2 m_G\zeta+L_G^2 \zeta^2) \|w-v\|_Z^2 \quad (w,v \in Z).
$$
For $\zeta \in (0,\frac{2m_G}{L_G^2})$, we have $1-2 m_G \zeta+L_G^2 \zeta^2 <1$ with minimal value $1-\frac{m_G^2}{L_G^2}$ for $\zeta=\zeta^*:=\frac{m_G}{L_G^2}$.
For $w_0$ arbitrary, the sequence $\{w_i\}$ defined by $w_{i+1}:=T_{\zeta^*} w_i$ converges to the unique fixed point $w$ of $T_{\zeta^*} (w)=w$, being thus the \emph{unique} solution of $G w=0$. This \emph{Picard iteration} is in this setting also known as the \emph{Zarantonello iteration} \cite{317.75}.

If $G$ satisfies \eqref{eq:Lip}-\eqref{eq:mono}, then for any $f \in Z'$ so does $\widetilde{G} w :=G w-f$, with $L_{\widetilde{G}}=L_G$, $m_{\widetilde{G}}=m_G$.
From the fact that thus $\widetilde{G} w=0$ has a unique solution, it follows that $G^{-1}\colon Z' \rightarrow Z$ exists.

By \eqref{eq:mono}, we have $m_G \|w-v\|_Z \leq \|G w-G v\|_{Z'}$, and so
$$
\|G^{-1} f-G^{-1} g\|_Z \leq \tfrac{1}{m_G} \|f-g\|_{Z'} \quad (f,g \in Z'),
$$
i.e., $G^{-1}$ is Lipschitz continuous with $L_{G^{-1}}=\frac{1}{m_G}$.

From \eqref{eq:Lip} and \eqref{eq:mono} we infer
\begin{align*}
\|G w-G v\|_{Z'}^2 &\leq L_G^2 \|w-v\|_Z^2 \leq  \tfrac{L_G^2}{m_G}  (G w-G v)(w-v),
\intertext{and so,} 
(f-g)(G^{-1} f-G^{-1} g) & \geq \tfrac{m_G}{L_G^2} \|f-g\|_{Z'}^2,\quad(f,g \in Z'),
\end{align*}
i.e., $G^{-1}$ is strongly monotone with $m_{G^{-1}}=\tfrac{m_G}{L_G^2}$.

\section{An equivalent formulation of our IVP amenable to Galerkin discretizations} \label{sec:4}

Already because the co-domain $Y' \times H$ of $B_e$ is not the dual of its domain $X$, equation \eqref{eq:33} is not a suitable starting point for creating numerical approximations.
For \emph{linear} $A$, in \cite{11,249.99,249.992} least squares, also known as minimal residual approximations are obtained by replacing $X$ by a subspace in 
$u=\argmin_{w \in X} \|(\ell,u_0)-B_ew\|_{Y' \times H}^2$, and by simultaneously replacing the dual norm on $Y'$ by a computable discretized dual norm.
Although for linear $A$ connected to the method that will be developed here (see \cite[Rem.~4.1]{249.992}), 
for \emph{nonlinear} $A$ the minimal residual approach does not lead to an easily manageable discretization that is necessarily `stable'.
Instead, with 
$$
\NN:=\left[\begin{array}{@{}cc@{}} A & \diff_t\\ \diff_t'& -(A+\gamma_T' \gamma_T)\end{array}\right]\colon Y \times X \rightarrow Y' \times X',
$$
we consider the system 
\be \label{eq:BEN}
\NN
\left[\begin{array}{@{}c@{}} \theta \\ u \end{array}\right]=
\left[\begin{array}{@{}c@{}} \ell \\ -(\ell+\gamma_0' u_0) \end{array}\right].
\ee
As we will see in Corollary~\ref{corol:stability}, $N$ is invertible. By applying \eqref{eq:34}, one directly verifies the following result.

\begin{theorem} \label{thm:basic} The unique solution of \eqref{eq:BEN} is given by $\theta=u:=B_e^{-1}(\ell,u_0)$.
\end{theorem}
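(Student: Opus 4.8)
The statement makes two assertions — that the pair $(\lambda,u)=(w,w)$ with $w:=B_e^{-1}(\ell,u_0)$ solves \eqref{eq:BEN}, and that \eqref{eq:BEN} has no other solution — and I would prove them separately. The first is the direct substitution announced in the sentence preceding the theorem. For the second I would appeal to the invertibility of $N$ stated there (Corollary~\ref{corol:stability}); alternatively, should one want the argument to be self-contained at this stage, uniqueness can be obtained directly from the strong monotonicity of $A$, as sketched below.

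For existence, recall that $w:=B_e^{-1}(\ell,u_0)\in X$ is characterised by $\diff_t w+Aw=\ell$ in $Y'$ together with $\gamma_0 w=u_0$. Putting $\lambda=u=w$ in \eqref{eq:BEN}, the first ($Y'$-valued) row becomes $Aw+\diff_t w=\ell$, which is the PDE. For the second ($X'$-valued) row I would substitute $\diff_t'=\gamma_T'\gamma_T-\diff_t-\gamma_0'\gamma_0$ (this is \eqref{eq:34}, as operators $X\to X'$), obtaining
$$
\diff_t'w-(A+\gamma_T'\gamma_T)w=\gamma_T'\gamma_T w-\diff_t w-\gamma_0'\gamma_0 w-Aw-\gamma_T'\gamma_T w=-(\diff_t w+Aw)-\gamma_0'\gamma_0 w=-(\ell+\gamma_0'u_0),
$$
where the PDE and $\gamma_0 w=u_0$ are used in the final step; hence $(w,w)$ solves \eqref{eq:BEN}.

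For uniqueness without invoking Corollary~\ref{corol:stability}, let $(\lambda,u)\in Y\times X$ be any solution and set $\mu:=\lambda-w\in Y$, $z:=u-w\in X$. Subtracting the two rows satisfied by $(w,w)$ from those satisfied by $(\lambda,u)$ — the linear operators $\diff_t,\diff_t',\gamma_T'\gamma_T$ cancel, $A$ surviving only through the increments $A\lambda-Aw$ and $Au-Aw$ — gives $A\lambda-Aw=-\diff_t z$ in $Y'$ and $\diff_t'\mu=(Au-Aw)+\gamma_T'\gamma_T z$ in $X'$. Evaluating the first identity at $\mu$, using the adjointness $(\diff_t z)(\mu)=(\diff_t'\mu)(z)$, and then inserting the second identity yields
$$
(A\lambda-Aw)(\mu)=-(\diff_t'\mu)(z)=-(Au-Aw)(z)-\|\gamma_T z\|_H^2 .
$$
Since $A$ is strongly monotone, the left-hand side is $\geq m_A\|\mu\|_Y^2\geq0$ and $(Au-Aw)(z)\geq m_A\|z\|_Y^2$, so the identity forces $\|\mu\|_Y=\|z\|_Y=\|\gamma_T z\|_H=0$, i.e.\ $\lambda=w$ and $u=w$. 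The step that needs care is precisely this bookkeeping: because $A$, and hence $N$, is nonlinear one cannot write $N(\lambda,u)-N(w,w)$ as a single linear expression, so one must carry the increments of $A$ along and exploit the adjointness $(\diff_t z)(\mu)=(\diff_t'\mu)(z)$ to see the cross terms collapse into a monotonicity estimate; everything else is a one-line computation. If the proof of Corollary~\ref{corol:stability} does not itself rely on Theorem~\ref{thm:basic}, the most economical write-up simply cites it for uniqueness and retains only the substitution above for the explicit form of the solution.
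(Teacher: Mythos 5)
Your main line of argument matches the paper's exactly: the paper proves existence by direct substitution using \eqref{eq:34} (indeed, this is precisely the identity $\diff_t' = \gamma_T'\gamma_T - \diff_t - \gamma_0'\gamma_0$ that makes the second row collapse to $-(\ell + \gamma_0' u_0)$), and it obtains uniqueness by forward-citing the invertibility of $\NN$ from Corollary~\ref{corol:stability}, whose proof (via Proposition~\ref{prop:equivSchur} and Lemma~\ref{lem:Schur}) does not depend on Theorem~\ref{thm:basic}, so there is no circularity — exactly the bookkeeping point you flag at the end. Your self-contained uniqueness argument via strong monotonicity of $A$ is correct and is a genuine alternative worth recording: subtracting the two rows, testing the first increment identity against $\mu$, and using the adjointness $(\diff_t z)(\mu) = (\diff_t'\mu)(z)$ to substitute the second identity yields $m_A\|\mu\|_Y^2 \le (A\lambda - Aw)(\mu) = -(Au-Aw)(z) - \|\gamma_T z\|_H^2 \le -m_A\|z\|_Y^2 \le 0$, forcing $\mu = z = 0$. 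What this buys is independence of Theorem~\ref{thm:basic} from the whole Schur-complement machinery of Section~\ref{sec:4}, at the cost of a slightly longer local computation; the paper's choice trades that self-containment for brevity since Corollary~\ref{corol:stability} is needed anyway.
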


Although it will not be used in this work, in the next remark it is shown how, under an additional condition, \eqref{eq:BEN} can be derived from the application of the so-called Br\'{e}zis-Ekeland-Nayroles variational principle (\cite{35.81,233.5}).

\begin{remark} Suppose $A$ has a proper potential $\varphi\colon Y \rightarrow \R \cup \{\infty\}$, i.e., the G\^{a}teaux derivative $d\varphi$ equals $A$. Then it is known (\cite{15.35,75.374}) that the solution $u$ of \eqref{eq:33} is given by $u=\argmin_{w \in X} I(w)$, where, with $\varphi^*\colon Y' \rightarrow \R \cup\{\infty\}$ denoting the Fenchel-Legendre dual of $\varphi$, it holds that
$$
I(w):=\varphi(w)+\varphi^*(\ell-\diff_t w) - (\ell-\diff_t w)(w)+\tfrac12\|\gamma_0 w-u_0\|_H^2.
$$
This $I$ is G\^{a}teaux differentiable at any $w \in X$, with
$$
\diff I(w)=A w+\gamma_T'\gamma_T w-\ell-\gamma_0' u_0-\diff_t' A^{-1}(\ell-\diff_t w),
$$
as one may verify by direct calculation.
As we will see in Lemma~\ref{lem:Schur}, $\diff I\colon X \rightarrow X'$ (there denoted by $S_{\ell,-(\ell+\gamma_0' u_0)}$) is Lipschitz continuous and strongly monotone, so that $\diff I(u)=0$ has a unique solution.
As a consequence of the strong monotonicity of $\diff I$, $I$ is strictly convex, which shows that the solution of $\diff I(u)=0$ is the unique minimizer of $I$.
By setting $\theta:=A^{-1}(\ell-\diff_tu)$, the equation $\diff I(u)=0$ is just the system \eqref{eq:BEN}.

Knowing that the solution $u$ of \eqref{eq:BEN} solves \eqref{eq:33}, using \eqref{eq:34} one verifies that $I(u)=0$.
\end{remark}

\begin{remark}
For linear $A$ and a homogeneous initial condition, \eqref{eq:BEN} has a similar structure as a preconditioned version  of the (linear) parabolic PDE discretized by implicit Euler presented in \cite{234.7}.
\end{remark}

We will study (families of) Galerkin discretizations of \eqref{eq:BEN}. 
Let $(X^\delta)_{\delta \in \Delta}$ and $(Y^\delta)_{\delta \in \Delta}$ be families of closed non-trivial subspaces of $X$ and $Y$, respectively.
Sometimes we will refer to $X^\delta$ and $Y^\delta$ as the \emph{trial} and \emph{test} spaces.
We include a $\delta_* \in \Delta$ for which $X^{\delta_*}=X$ and $Y^{\delta_*}=Y$, which has the following consequence:\vspace*{.5ex}
\begin{center}
\emph{All results that will be shown for $\delta \in \Delta$ include the continuous setting.}
\end{center}
We will assume that $X^\delta$ and $Y^\delta$ for $\delta \neq \delta_*$  are finite dimensional. The trivial embeddings $X^\delta \hookrightarrow X$ and $Y^\delta \hookrightarrow Y$ will be denoted by $E_X^\delta$ and $E_Y^\delta$, respectively, which operators as well as their duals we often write for clarity.
\smallskip

We equip $Y^\delta$ with its natural (Hilbertian) norm
\begin{align*}
&\|\bigcdot\|_{Y^\delta}:=\|E_Y^\delta \bigcdot\|_Y,
\intertext{so that}
&\|\bigcdot\|_{(Y^\delta)'}=\sup_{0 \neq v \in Y^\delta}\frac{\bigcdot(v)}{\|v\|_{Y^\delta}}.
\intertext{On $X$, besides $\|\bigcdot\|_X$ we define additional, and, concerning the measurement of the temporal derivative, \emph{`mesh'-dependent} norms,}
&\|\bigcdot\|_{X,\delta}:=\sqrt{\| \bigcdot\|^2_{Y}+\|(E_Y^\delta)' \diff_t \bigcdot\|^2_{(Y^\delta)'}+\|\gamma_T \bigcdot\|_H^2}\,,
\intertext{and equip $X^\delta$ with}
&\|\bigcdot\|_{X^\delta}:=\|E_X^\delta \bigcdot\|_{X,\delta}\,,
\intertext{so that}
&\|\bigcdot\|_{(X^\delta)'}=\sup_{0 \neq v \in X^\delta}\frac{\bigcdot(v)}{\|v\|_{X^\delta}}.
\end{align*}
We emphasize that for $\delta\neq \delta_*$, the norm on $X^\delta$, and therefore that on $(X^\delta)'$, depend on the choice of the subspace $Y^\delta \subset Y$ that will be clear from the context.

The above norms $\|\bigcdot\|_{Y^{\delta_*}}$, $\|\bigcdot\|_{(Y^{\delta_*})'}$, $\|\bigcdot\|_{X^{\delta_*}}$, $\|\bigcdot\|_{(X^{\delta_*})'}$ will usually be written without the index $\delta_*$. Thanks to \eqref{eq:embedding}, $\|\bigcdot\|_X=\sqrt{\|\bigcdot\|^2_{Y}+\|\diff_t \bigcdot\|^2_{Y'}+\|\gamma_T \bigcdot\|_H^2}$ is equivalent to the canonical norm $\sqrt{\|\bigcdot\|^2_{Y}+\|\diff_t \bigcdot\|^2_{Y'}}$ on $X$.
\smallskip

The following result extends the `inf-sup identity' known for $\delta=\delta_*$ (e.g.~\cite{70.95}), to general $\delta \in \Delta$.
For convenience we recall its short proof.

\begin{lemma}[{\cite[Lemma 3.4]{249.99}}] With the \emph{Riesz map} $R_Y\colon Y \rightarrow Y'$, it holds that
$$
\|\bigcdot\|_{X,\delta}^2=\|(\diff_t+R_Y)\bigcdot\|_{(Y^\delta)'}^2+\|\gamma_0 \bigcdot\|_H^2 \qquad\text{on } X \cap Y^\delta.
$$
\end{lemma}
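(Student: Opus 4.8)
The plan is to pass to Riesz representatives inside $Y^\delta$ and then reduce the claimed identity to the integration-by-parts relation \eqref{eq:34}. Fix $w \in X \cap Y^\delta$. Since $(E_Y^\delta)'\diff_t w \in (Y^\delta)'$, I would let $z \in Y^\delta$ be its Riesz representative with respect to $\langle\cdot,\cdot\rangle_Y$, i.e.\ $\langle z,v\rangle_Y=(\diff_t w)(v)$ for all $v\in Y^\delta$; because $\|\cdot\|_{Y^\delta}$ is the restriction of $\|\cdot\|_Y$, this gives $\|(E_Y^\delta)'\diff_t w\|_{(Y^\delta)'}=\|z\|_Y$. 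The crucial use of the hypothesis $w\in Y^\delta$ is that the Riesz representative in $Y^\delta$ of the functional $v\mapsto(\diff_t w)(v)+\langle w,v\rangle_Y$ is then $z+w$, which again lies in $Y^\delta$; hence $\|(\diff_t+R_Y)w\|_{(Y^\delta)'}=\|z+w\|_Y$.

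Next I would expand $\|z+w\|_Y^2=\|z\|_Y^2+2\langle z,w\rangle_Y+\|w\|_Y^2$ and note, once more using that $w$ is itself an admissible test function in $Y^\delta$, that $\langle z,w\rangle_Y=(\diff_t w)(w)$. This yields
$$\|(\diff_t+R_Y)w\|_{(Y^\delta)'}^2=\|(E_Y^\delta)'\diff_t w\|_{(Y^\delta)'}^2+\|w\|_Y^2+2(\diff_t w)(w).$$
Finally, applying the operator identity \eqref{eq:34}, which is valid on all of $X$ by density of smooth functions together with the embedding $X\hookrightarrow C(\overline{\cJ},H)$, to the pair $(w,w)$ and using $(\diff_t' w)(w)=(\diff_t w)(w)$, one gets $2(\diff_t w)(w)+\|\gamma_0 w\|_H^2=\|\gamma_T w\|_H^2$. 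Substituting this into the previous display and rearranging gives $\|(\diff_t+R_Y)w\|_{(Y^\delta)'}^2+\|\gamma_0 w\|_H^2=\|w\|_Y^2+\|(E_Y^\delta)'\diff_t w\|_{(Y^\delta)'}^2+\|\gamma_T w\|_H^2=\|w\|_{X,\delta}^2$, which is the assertion.

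There is no genuine obstacle in this argument; the only points deserving care are that both the identification $\|(\diff_t+R_Y)w\|_{(Y^\delta)'}=\|z+w\|_Y$ and the equality $\langle z,w\rangle_Y=(\diff_t w)(w)$ rest essentially on $w\in Y^\delta$ — for a general $w\in X$ one would only obtain one-sided estimates. For $\delta=\mbox{`\,\,'}$ this recovers the classical inf-sup identity and the argument specializes to the familiar one.
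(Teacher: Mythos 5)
Your argument is correct and is essentially the same as the paper's proof: your $z$ plays the role of the paper's $y$, your $w$ plays the role of the paper's $z$, and both arguments compute $\|(\diff_t+R_Y)w\|_{(Y^\delta)'}^2$ by identifying the Riesz representative in $Y^\delta$ as $y+w$ (relying on $w\in Y^\delta$), expanding the norm, and closing with the integration-by-parts identity \eqref{eq:34}. The ordering of the final algebraic steps differs slightly but the substance is identical.
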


\begin{proof} For $z \in X \cap Y^\delta$, let $y \in Y^\delta$ be defined by $(R_Y y)(v)=(\diff_t z)(v)$ ($v \in Y^\delta$). Then $(R_Y y)(y)=\|\diff_t z\|_{(Y^\delta)'}^2$.
Furthermore, for $v \in Y^\delta$ we have $((\diff_t+R_Y)z)(v)=(R_Y(y+z))(v)$, and so, thanks to $z \in Y^\delta$,
\begin{align*}
\|(\diff_t+R_Y)z\|_{(Y^\delta)'}^2&=(R_Y(y+z))(y+z)=(R_Y y)(y)+2(R_Y y)(z)+(R_Y z)(z)\\
&=(R_Y y)(y)+2 (\diff_t z)(z)+(R_Y z)(z)=\|z\|_{X,\delta}^2-\|\gamma_0 z\|_H^2,
\end{align*}
where we used  $\diff_t+\diff_t'=\gamma_T' \gamma_T-\gamma_0' \gamma_0$ as mappings $X \rightarrow X'$.
\end{proof}

Our only application of the above lemma will be in the following immediate corollary. In view of \eqref{eq:embedding}, it is mainly relevant for $\delta \neq \delta_*$.

\begin{corollary} \label{corol:2} 
For $X^\delta \subset Y^\delta$, it holds that $
\|\gamma_0 E_X^\delta\|_{X^\delta \rightarrow H} \leq 1$.
\end{corollary}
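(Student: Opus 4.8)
The plan is to apply the preceding lemma with the subspace taken to be $X^\delta$ itself, exploiting the inclusion $X^\delta \subset Y^\delta$. Since the lemma is valid for any closed subspace of $Y$ playing the role of $Y^\delta$, and since $X^\delta \subset Y^\delta \subset Y$ is such a subspace, we may instantiate the lemma with $Y^\delta$ replaced by $X^\delta$. For $z \in X \cap X^\delta = X^\delta$ this yields
\[
\|z\|_{X,\delta'}^2 = \|(\diff_t + R_Y) z\|_{(X^\delta)'}^2 + \|\gamma_0 z\|_H^2,
\]
where here $\|\bigcdot\|_{X,\delta'}$ denotes the norm built from the dual norm over $X^\delta$ rather than over $Y^\delta$; i.e.\ the norm $\sqrt{\|\bigcdot\|_Y^2 + \|(E_X^\delta)'\diff_t\bigcdot\|_{(X^\delta)'}^2 + \|\gamma_T\bigcdot\|_H^2}$.

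The second step is to compare this with $\|\bigcdot\|_{X^\delta} = \|E_X^\delta \bigcdot\|_{X,\delta}$, which uses the dual norm over $Y^\delta$. Because $X^\delta \subset Y^\delta$, for any functional $g$ on $Y$ we have $\|g\|_{(X^\delta)'} \le \|g\|_{(Y^\delta)'}$ (the supremum defining the former is over a smaller set). Applying this with $g = \diff_t z$ gives $\|z\|_{X,\delta'} \le \|z\|_{X^\delta}$ for $z \in X^\delta$. Combining with the identity above,
\[
\|\gamma_0 z\|_H^2 \le \|z\|_{X,\delta'}^2 \le \|z\|_{X^\delta}^2 \qquad (z \in X^\delta),
\]
which is exactly the claim $\|\gamma_0 E_X^\delta\|_{X^\delta \to H} \le 1$.

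I do not anticipate a genuine obstacle here; the only point requiring care is the bookkeeping around \emph{which} space the dual norm is taken over, since, as Remark~\ref{rem:2} stresses, the norm on $X^\delta$ is defined via the $Y^\delta$-dual, whereas the lemma, applied with parameter $X^\delta$, naturally produces the $X^\delta$-dual. The inclusion $X^\delta \subset Y^\delta$ is precisely what makes these two go in the right direction: the $Y^\delta$-dual norm dominates the $X^\delta$-dual norm, so the larger norm $\|\bigcdot\|_{X^\delta}$ still controls $\|\gamma_0 \bigcdot\|_H$ via the identity. One could alternatively give an even more direct argument avoiding the lemma: for $z \in X^\delta$, test the relation $\diff_t + \diff_t' = \gamma_T'\gamma_T - \gamma_0'\gamma_0$ against $z$ to get $2(\diff_t z)(z) = \|\gamma_T z\|_H^2 - \|\gamma_0 z\|_H^2$, whence $\|\gamma_0 z\|_H^2 = \|\gamma_T z\|_H^2 - 2(\diff_t z)(z) \le \|\gamma_T z\|_H^2 + 2\|\diff_t z\|_{(Y^\delta)'}\|z\|_{Y^\delta}$ using $z \in Y^\delta$, and then bound the right-hand side by $\|\gamma_T z\|_H^2 + \|\diff_t z\|_{(Y^\delta)'}^2 + \|z\|_Y^2 = \|z\|_{X^\delta}^2$ via Young's inequality. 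Either route is short; I would present the one invoking the lemma for economy.
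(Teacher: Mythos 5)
Your proof is correct, but both of your routes are detours compared with what the paper calls an ``immediate'' consequence. The point of the hypothesis $X^\delta \subset Y^\delta$ is simply that it places $E_X^\delta z$ inside $X \cap Y^\delta$, the domain on which Lemma~4.4 (applied \emph{as stated}, with the given $Y^\delta$) holds. So for $z \in X^\delta$ one gets directly
\[
\|z\|_{X^\delta}^2 = \|E_X^\delta z\|_{X,\delta}^2 = \|(\diff_t + R_Y)E_X^\delta z\|_{(Y^\delta)'}^2 + \|\gamma_0 E_X^\delta z\|_H^2 \ge \|\gamma_0 E_X^\delta z\|_H^2,
\]
and there is nothing more to do. You instead re-instantiated the lemma with $X^\delta$ playing the role of the test space, which produces an identity for an auxiliary norm $\|\cdot\|_{X,\delta'}$ built on $(X^\delta)'$, and then used $X^\delta \subset Y^\delta$ a second time to compare dual norms and show $\|\cdot\|_{X,\delta'} \le \|\cdot\|_{X^\delta}$. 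Both steps are correct, and the monotonicity of dual norms under inclusion is indeed the heart of the matter, but you are using the inclusion twice when once suffices: once is enough to land in the lemma's domain of validity, and then the $\gamma_0$-term is simply a summand in an identity for $\|\cdot\|_{X^\delta}$ itself. Your alternative direct argument (testing $\diff_t + \diff_t' = \gamma_T'\gamma_T - \gamma_0'\gamma_0$ against $z$ and applying Young's inequality) is also valid and in effect re-derives the relevant piece of Lemma~4.4 inline; since the lemma is already available, there is no gain, but it is a legitimate self-contained proof.
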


With
$$
A_Y^\delta:=(E_Y^\delta)' A E_Y^\delta,\quad
A_X^\delta:=(E_X^\delta)' A E_X^\delta,\quad
\diff_t^\delta:=(E_Y^\delta)' \diff_t E_X^\delta,\quad
\gamma_{t}^\delta:=\gamma_{t} E_X^\delta,
$$
we set
$$
\NN^\delta := \left[\begin{array}{@{}cc@{}} A_Y^\delta & \diff_t^\delta\\ (\diff_t^\delta)'& -(A_X^\delta+(\gamma^\delta_T)' \gamma^\delta_T)\end{array}\right]\colon Y^\delta \times X^\delta \rightarrow (Y^\delta)' \times (X^\delta)',
$$
being the operator resulting from the \emph{Galerkin discretization} of \eqref{eq:BEN}.

\begin{proposition} \label{prop:Lip} $N^\delta\colon Y^\delta \times X^\delta \rightarrow (Y^\delta)' \times (X^\delta)'$ is \emph{Lipschitz continuous}, with constant $L_{\NN^\delta}=L_\NN:=L_A+1$.
\end{proposition}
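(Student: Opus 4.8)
The plan is to split $\NN^\delta=\NN_1^\delta+\NN_2^\delta$ into the part carrying the spatial operator $A$ and the (linear) part carrying the temporal derivative and the trace at $T$,
\[
\NN_1^\delta:=\left[\begin{array}{@{}cc@{}} A_Y^\delta & 0\\ 0 & -A_X^\delta\end{array}\right],
\qquad
\NN_2^\delta:=\left[\begin{array}{@{}cc@{}} 0 & \diff_t^\delta\\ (\diff_t^\delta)' & -(\gamma^\delta_T)'\gamma^\delta_T\end{array}\right],
\]
to bound the Lipschitz constant of each, and to add them. Since the sum of two Lipschitz maps into a normed space is Lipschitz with constant at most the sum of the two, $L_{\NN^\delta}\le L_{\NN_1^\delta}+L_{\NN_2^\delta}$, so it suffices to prove $L_{\NN_1^\delta}\le L_A$ and $L_{\NN_2^\delta}\le1$.

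For $\NN_1^\delta$ I combine Lipschitz continuity of $A$ with the elementary bounds $\|(E_Y^\delta)'g\|_{(Y^\delta)'}\le\|g\|_{Y'}$ and $\|(E_X^\delta)'g\|_{(X^\delta)'}\le\|g\|_{Y'}$, the latter because $\|E_X^\delta\bigcdot\|_Y\le\|\bigcdot\|_{X^\delta}$, which is immediate from the definition of $\|\bigcdot\|_{X,\delta}$. These yield $\|A_Y^\delta p-A_Y^\delta q\|_{(Y^\delta)'}\le L_A\|p-q\|_{Y^\delta}$ and $\|A_X^\delta p-A_X^\delta q\|_{(X^\delta)'}\le L_A\|E_X^\delta(p-q)\|_Y\le L_A\|p-q\|_{X^\delta}$; both diagonal entries being $L_A$-Lipschitz, $L_{\NN_1^\delta}\le L_A$.

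Since $\NN_2^\delta$ is linear, I bound its operator norm. Fix $(\mu,v)\in Y^\delta\times X^\delta$. The first component $\diff_t^\delta v$ satisfies $\|\diff_t^\delta v\|_{(Y^\delta)'}\le\|v\|_{X^\delta}$, directly from the definition of $\|\bigcdot\|_{X,\delta}$. For the second component, testing against $z\in X^\delta$ and using $((\diff_t^\delta)'\mu)(z)=(\diff_t^\delta z)(\mu)$,
\[
(\diff_t^\delta z)(\mu)-\langle\gamma^\delta_T v,\gamma^\delta_T z\rangle_H
\le \|\diff_t^\delta z\|_{(Y^\delta)'}\|\mu\|_{Y^\delta}+\|\gamma^\delta_T z\|_H\|\gamma^\delta_T v\|_H
\le \|z\|_{X^\delta}\sqrt{\|\mu\|_{Y^\delta}^2+\|v\|_{X^\delta}^2},
\]
where the last step applies the Cauchy--Schwarz inequality in $\R^2$ to the pairs $(\|\diff_t^\delta z\|_{(Y^\delta)'},\|\gamma^\delta_T z\|_H)$ and $(\|\mu\|_{Y^\delta},\|\gamma^\delta_T v\|_H)$, using $\|\diff_t^\delta z\|_{(Y^\delta)'}^2+\|\gamma^\delta_T z\|_H^2\le\|z\|_{X^\delta}^2$ and $\|\gamma^\delta_T v\|_H\le\|v\|_{X^\delta}$, once more read off from $\|\bigcdot\|_{X,\delta}$. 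Hence both components of $\NN_2^\delta(\mu,v)$ are bounded by $\sqrt{\|\mu\|_{Y^\delta}^2+\|v\|_{X^\delta}^2}\le\|\mu\|_{Y^\delta}+\|v\|_{X^\delta}=\|(\mu,v)\|$, so $L_{\NN_2^\delta}\le1$, which completes the proof.

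The argument is uniform in $\delta\in\Delta$ and uses neither an inf--sup condition nor the inclusion $X^\delta\subset Y^\delta$. The one point requiring care is the sharpness of the constant $L_A+1$: it hinges on keeping $\gamma^\delta_T$ together with $\diff_t^\delta$ in $\NN_2^\delta$ and merging their contributions through the single Cauchy--Schwarz step above (with $\|\diff_t^\delta z\|_{(Y^\delta)'}^2+\|\gamma^\delta_T z\|_H^2\le\|z\|_{X^\delta}^2$); splitting off and estimating the $A$-, $\diff_t^\delta$- and $\gamma^\delta_T$-terms one at a time would give only a larger constant. Everything else is bookkeeping with the embeddings $E_X^\delta,E_Y^\delta$ and the mesh-dependent norms.
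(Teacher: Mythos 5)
Your proof is correct, and the argument is essentially the paper's, reorganized. The paper writes out the bilinear pairing $\big(N^\delta(\xi,z)-N^\delta(\mu,w)\big)(\sigma,v)$, bounds all five resulting terms at once, and concludes via a single weighted Cauchy--Schwarz in which the contributions of $\diff_t^\delta$ and $\gamma_T^\delta$ are merged using the fact that $\|\diff_t^\delta z\|_{(Y^\delta)'}^2+\|\gamma_T^\delta z\|_H^2\le\|z\|_{X^\delta}^2$; you instead split $N^\delta=N_1^\delta+N_2^\delta$ and add Lipschitz constants, which isolates the same Cauchy--Schwarz step inside $N_2^\delta$. The splitting buys transparency: it makes the origin of $L_A+1$ visible as $L_A$ (the two $A$-blocks) plus $1$ (the skew $\diff_t^\delta$-block together with the $\gamma_T$-term), and it makes explicit that putting $(\gamma_T^\delta)'\gamma_T^\delta$ with the temporal block rather than the $A$-block is what avoids a degraded constant such as $\max(1,L_A)+1$. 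One small caveat: your final aside slightly overstates the role of the $\R^2$ Cauchy--Schwarz. Under the convention you (and the paper) actually use — sum norm on the domain, component-wise (dual) bound on the codomain — the cruder estimate $\|(\diff_t^\delta)'\mu-(\gamma^\delta_T)'\gamma^\delta_T v\|_{(X^\delta)'}\le\|\mu\|_{Y^\delta}+\|v\|_{X^\delta}$ obtained by treating the two terms separately already yields $L_{N_2^\delta}\le 1$; the Cauchy--Schwarz step sharpens the intermediate bound to $\sqrt{\|\mu\|_{Y^\delta}^2+\|v\|_{X^\delta}^2}$ but is not what saves the constant here. Everything else — the embedding bounds $\|E_X^\delta\bigcdot\|_Y\le\|\bigcdot\|_{X^\delta}$, $\|\diff_t^\delta\bigcdot\|_{(Y^\delta)'}\le\|\bigcdot\|_{X^\delta}$, $\|\gamma_T^\delta\bigcdot\|_H\le\|\bigcdot\|_{X^\delta}$, and the $L_A$-Lipschitz continuity of $A_Y^\delta$ and $A_X^\delta$ — is exactly as in the paper.
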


\begin{proof} For $(\xi,z), (\mu,w), (\sigma,v) \in Y^\delta \times X^\delta$,
\begin{align*}
&\Big(\NN^\delta
\left[\begin{array}{@{}c@{}} \xi \\ z \end{array}\right]
-
\NN^\delta
\left[\begin{array}{@{}c@{}} \mu \\ w\end{array}\right]
\Big)
\left[\begin{array}{@{}c@{}} \sigma \\ v \end{array}\right]
=
(A_Y^\delta \xi)(\sigma)-(A_Y^\delta \mu)(\sigma)+(\diff_t^\delta(z-w))(\sigma)\\
&\hspace*{1cm}+(\diff_t^\delta v)(\xi-\mu)+(A_X^\delta w)(v)-(A_X^\delta z)(v)+\langle \gamma_T^\delta (w-z),\gamma_T^\delta v\rangle_H\\
&\leq L_A\|\xi-\mu\|_{Y^\delta} \|\sigma\|_{Y^\delta}+\|\diff_t^\delta(z-w)\|_{(Y^\delta)'}\|\sigma\|_{Y^\delta}+\|\xi-\mu\|_{Y^\delta}\|\diff_t^\delta v\|_{(Y^\delta)'}\\
&\hspace{1em} + L_A\|E_X^\delta(z-w)\|_Y \|E_X^\delta v\|_Y+\|\gamma_T^\delta (z-w)\|_H \|\gamma_T^\delta v\|_H\\
& \leq
\Big[ \big((1+L_A) \|\xi-\mu\|_{Y^\delta}^2+\max(1,L_A)\|z-w\|_{X^\delta}^2\big)\\
&\hspace{6em} \times \big((1+L_A) \|\sigma\|_{Y^\delta}^2+\max(1,L_A)\|v\|_{X^\delta}^2\big)\Big]^\frac12\\
& \leq (1+L_A) \big(\|z-w\|_{X^\delta}+\|\xi-\mu\|_{Y^\delta}\big)(\|v\|_{X^\delta}+\|\sigma\|_{Y^\delta}\big),
\end{align*}
which completes the proof.
 \end{proof}
 
 For $(f^\delta,g^\delta) \in (Y^\delta)' \times (X^\delta)'$,  consider the problem of finding $(\theta^\delta,u^\delta) \in Y^\delta \times X^\delta$ such that
\be \label{eq:Galsystem}
\NN^\delta \left[\begin{array}{@{}c@{}} \theta^\delta \\ u^\delta \end{array}\right]=\left[\begin{array}{@{}c@{}} f^\delta \\ g^\delta \end{array}\right].
\ee
From $A\colon Y \rightarrow Y'$ being Lipschitz continuous and strongly monotone, it follows that so is $A_Y^\delta\colon Y^\delta \rightarrow (Y^\delta)'$, with constants as for $A$. In particular, $A_Y^\delta$ is invertible. 
By eliminating $\theta^\delta$ from \eqref{eq:Galsystem} we conclude the following result.

\begin{proposition} \label{prop:equivSchur} With $S^\delta=S_{f^\delta,g^\delta}^\delta\colon X^\delta \rightarrow (X^\delta)'$ defined by
$$
S^\delta z:=A_X^\delta z+(\gamma_T^\delta)'\gamma_T^\delta z+g^\delta-(\diff_t^\delta)' (A_Y^\delta)^{-1}(f^\delta-\diff_t^\delta z),
$$
\eqref{eq:Galsystem} is equivalent to
$$
S^\delta u^\delta=0,
$$
and $\theta^\delta:=(A_Y^\delta)^{-1}(f^\delta-\diff_t^\delta u^\delta)$.
\end{proposition}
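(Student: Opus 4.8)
The plan is to reduce the $2\times 2$ block system \eqref{eq:Galsystem} to its Schur complement by eliminating the first unknown $\lambda^\delta$. Since $A_Y^\delta\colon Y^\delta\to(Y^\delta)'$ is Lipschitz continuous and strongly monotone (inheriting the constants of $A$ as noted just before the statement), the discussion in Section~\ref{sec:LipMono} guarantees that $(A_Y^\delta)^{-1}\colon(Y^\delta)'\to Y^\delta$ exists. Hence the first block equation $A_Y^\delta\lambda^\delta+\diff_t^\delta u^\delta=f^\delta$ can be solved uniquely for $\lambda^\delta$, yielding $\lambda^\delta=(A_Y^\delta)^{-1}(f^\delta-\diff_t^\delta u^\delta)$, which is precisely the asserted formula.

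Next I would substitute this expression for $\lambda^\delta$ into the second block equation, namely $(\diff_t^\delta)'\lambda^\delta-(A_X^\delta+(\gamma_T^\delta)'\gamma_T^\delta)u^\delta=g^\delta$. Rearranging, this reads
$$
A_X^\delta u^\delta+(\gamma_T^\delta)'\gamma_T^\delta u^\delta+g^\delta-(\diff_t^\delta)'(A_Y^\delta)^{-1}(f^\delta-\diff_t^\delta u^\delta)=0,
$$
which is exactly $S^\delta u^\delta=0$ with $S^\delta$ as defined in the statement. Conversely, given any $u^\delta\in X^\delta$ solving $S^\delta u^\delta=0$, setting $\lambda^\delta:=(A_Y^\delta)^{-1}(f^\delta-\diff_t^\delta u^\delta)$ recovers the first block equation by construction (apply $A_Y^\delta$), and the definition of $S^\delta$ recovers the second. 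So the two problems are equivalent, with the $(\lambda^\delta,u^\delta)\leftrightarrow u^\delta$ correspondence being a bijection.

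Since the argument is essentially bookkeeping — one equation is solved explicitly and back-substituted — I do not anticipate a genuine obstacle. The only point requiring minor care is the justification that $A_Y^\delta$ is invertible with a well-defined (single-valued) inverse on all of $(Y^\delta)'$; but this is already furnished by the general theory of Lipschitz continuous and strongly monotone operators recalled in Section~\ref{sec:LipMono}, applied to the operator $v\mapsto A_Y^\delta v-f^\delta$ on the Hilbert space $Y^\delta$. No stability estimates or norm computations are needed here — those will be used later — so the proof reduces to writing out the two implications above.
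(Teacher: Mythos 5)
Your proof is correct and matches the paper's approach: the paper simply states that the proposition follows ``by eliminating $\lambda^\delta$ from \eqref{eq:Galsystem}'', which is precisely the elimination-and-backsubstitution you spell out, with the invertibility of $A_Y^\delta$ supplied by the general theory of Section~\ref{sec:LipMono} as you note.
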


\begin{lemma} \label{lem:Schur} $S^\delta:X^\delta \rightarrow (X^\delta)'$ is Lipschitz continuous with constant $L_{S^\delta}=L_{S}:=\max\big(1,L_A,\tfrac{1}{m_A} \big)$, and strongly monotone with constant $m_{S^\delta}=m_{S}=\min\big(1,m_A,\tfrac{m_A}{L_A^2}\big)$.
\end{lemma}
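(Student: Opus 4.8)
The plan is to test the difference $S^\delta z - S^\delta w$ against an arbitrary $v \in X^\delta$ (for Lipschitz continuity) and against $v = z-w$ (for strong monotonicity), and to reduce everything to the three elementary inequalities recorded in Section~\ref{sec:LipMono}. Note first that since $S^\delta$ enters only through differences, the data $f^\delta$, $g^\delta$ drop out: for $z,w,v \in X^\delta$,
\[
(S^\delta z - S^\delta w)(v) = (A_X^\delta z - A_X^\delta w)(v) + \langle \gamma_T^\delta(z-w),\gamma_T^\delta v\rangle_H - \bigl((A_Y^\delta)^{-1}f - (A_Y^\delta)^{-1}g\bigr)(\diff_t^\delta v),
\]
where $f := f^\delta - \diff_t^\delta z$ and $g := f^\delta - \diff_t^\delta w$ in $(Y^\delta)'$, so that $f - g = -\diff_t^\delta(z-w)$. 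Since $A\colon Y \to Y'$, and hence $A_Y^\delta\colon Y^\delta \to (Y^\delta)'$, is Lipschitz continuous and strongly monotone with the constants of $A$, Section~\ref{sec:LipMono} gives that $(A_Y^\delta)^{-1}$ is Lipschitz continuous with constant $\tfrac1{m_A}$ and strongly monotone with constant $\tfrac{m_A}{L_A^2}$. I will also use the identity $\|y\|_{X^\delta}^2 = \|E_X^\delta y\|_Y^2 + \|\diff_t^\delta y\|_{(Y^\delta)'}^2 + \|\gamma_T^\delta y\|_H^2$ for $y \in X^\delta$, which is immediate from the definitions of $\|\bigcdot\|_{X,\delta}$, $\diff_t^\delta$ and $\gamma_T^\delta$.

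For Lipschitz continuity I would bound the three terms separately: $|(A_X^\delta z - A_X^\delta w)(v)| \le L_A\|E_X^\delta(z-w)\|_Y\|E_X^\delta v\|_Y$ by Lipschitz continuity of $A$; the middle term by $\|\gamma_T^\delta(z-w)\|_H\|\gamma_T^\delta v\|_H$; and the last by $\|(A_Y^\delta)^{-1}f - (A_Y^\delta)^{-1}g\|_{Y^\delta}\|\diff_t^\delta v\|_{(Y^\delta)'} \le \tfrac1{m_A}\|\diff_t^\delta(z-w)\|_{(Y^\delta)'}\|\diff_t^\delta v\|_{(Y^\delta)'}$. A discrete Cauchy--Schwarz inequality over these three contributions, followed by the norm identity above applied to both $z-w$ and $v$, then yields $(S^\delta z - S^\delta w)(v) \le \max\bigl(1,L_A,\tfrac1{m_A}\bigr)\|z-w\|_{X^\delta}\|v\|_{X^\delta}$; taking the supremum over $0 \neq v \in X^\delta$ gives $L_{S^\delta} \le L_S$.

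For strong monotonicity I would put $v = z-w$. The first term is $\ge m_A\|E_X^\delta(z-w)\|_Y^2$ by strong monotonicity of $A$ (applied to $E_X^\delta z, E_X^\delta w \in X \hookrightarrow Y$); the middle term is exactly $\|\gamma_T^\delta(z-w)\|_H^2 \ge 0$; and the Schur correction term equals $\bigl((A_Y^\delta)^{-1}f - (A_Y^\delta)^{-1}g\bigr)(f-g) \ge \tfrac{m_A}{L_A^2}\|\diff_t^\delta(z-w)\|_{(Y^\delta)'}^2$, where the sign comes out right precisely because $f-g = -\diff_t^\delta(z-w)$, and one invokes strong monotonicity of $(A_Y^\delta)^{-1}$. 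Summing the three bounds and using the norm identity gives $(S^\delta z - S^\delta w)(z-w) \ge \min\bigl(1,m_A,\tfrac{m_A}{L_A^2}\bigr)\|z-w\|_{X^\delta}^2 = m_S\|z-w\|_{X^\delta}^2$.

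I do not expect a genuine obstacle: the argument is essentially bookkeeping, assembling the estimates from Section~\ref{sec:LipMono} for $A_X^\delta$ and $(A_Y^\delta)^{-1}$ and matching them against the three blocks of $\|\bigcdot\|_{X^\delta}^2$. The only steps that require a moment's care are the sign/pairing in the monotonicity estimate of the Schur correction term (writing $-(\,\cdot\,)(\diff_t^\delta(z-w))$ as $(\,\cdot\,)(f-g)$ before applying monotonicity of $(A_Y^\delta)^{-1}$), and keeping track of which of $1$, $L_A$, $\tfrac1{m_A}$ (respectively $1$, $m_A$, $\tfrac{m_A}{L_A^2}$) dominates so that the final constant is a clean maximum (respectively minimum); that all constants are independent of $f^\delta$, $g^\delta$ and of $\delta$ is visible from the fact that only differences occur.
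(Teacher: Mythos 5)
Your proof is correct and follows the same route as the paper: split $(S^\delta z-S^\delta w)(v)$ into the $A_X^\delta$, trace, and Schur-correction terms, bound each by the corresponding constant ($L_A$ or $m_A$, $1$, $\tfrac1{m_A}$ or $\tfrac{m_A}{L_A^2}$) from Section~\ref{sec:LipMono}, and recombine via the three-block decomposition of $\|\bigcdot\|_{X^\delta}^2$. The only cosmetic difference is that you spell out the sign bookkeeping $f-g=-\diff_t^\delta(z-w)$ before invoking strong monotonicity of $(A_Y^\delta)^{-1}$, which the paper leaves implicit.
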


\begin{proof} For $w,z, v \in X^\delta$, we have
\begin{align*}
\big(S^\delta&w-S^\delta z\big)(v)=\big(A_X^\delta w-A_X^\delta z\big)(v)+\langle \gamma_T^\delta(w-z),\gamma_T^\delta v\rangle_H\\
&\hspace*{9em}  +(\diff_t^\delta v)\big((A_Y^\delta)^{-1}(f^\delta-\diff^\delta_t z)-(A_Y^\delta)^{-1}(f^\delta-\diff^\delta_t w)\big)\\
&\leq L_A \|E_X^\delta(w-z)\|_Y \|E_X^\delta v\|_Y+\| \gamma_T^\delta(w-z)\|_H \|\gamma_T^\delta v\|_H\\
&\hspace*{9em} +\|\diff_t^\delta v\|_{(Y^\delta)'} \tfrac{1}{m_A} \|\diff^\delta_t(w-z)\|_{(Y^\delta)'} \\
& \leq \max\big(1,L_A,\tfrac{1}{m_A} \big)\|w-z\|_{X^\delta} \|v\|_{X^\delta},
\end{align*}
and for $w,z \in X^\delta$, we have
\begin{align*}
\big(S^\delta w&-S^\delta z\big)(w-z) \\
&\geq m_A\|E_X^\delta(w-z)\|_Y^2+\|\gamma_T^\delta(w-z)\|_H^2+\tfrac{m_A}{L_A^2}\|\diff_t^\delta(w-z)\|_{(Y^\delta)'}^2\\
& \geq \min\big(1,m_A,\tfrac{m_A}{L_A^2}\big)\|w-z\|_{X^\delta}^2. \qedhere
\end{align*}
\end{proof}
 
\begin{corollary} \label{corol:stability} $\NN^\delta\colon Y^\delta \times X^\delta \rightarrow (Y^\delta)' \times (X^\delta)'$ is invertible, with $(\NN^\delta)^{-1}$  being \emph{Lipschitz continuous} with constant
 $$
 L_{(\NN^\delta)^{-1}}= L_{\NN^{-1}}:=\tfrac{1}{m_A}+\max\big(1,\tfrac{1}{m_A})\big(\tfrac{1}{m_S}(1+\tfrac{1}{m_A})\big).
 $$ 
\end{corollary}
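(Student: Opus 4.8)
The plan is to derive the Lipschitz continuity of $(\NN^\delta)^{-1}$ from the structure already established: $\NN^\delta$ has a block form in which the $(1,1)$-block $A_Y^\delta$ is Lipschitz and strongly monotone (hence boundedly invertible with $L_{(A_Y^\delta)^{-1}} = 1/m_A$), and whose Schur complement $S^\delta$ has just been shown in Lemma~\ref{lem:Schur} to be Lipschitz and strongly monotone, hence boundedly invertible with $L_{(S^\delta)^{-1}} = 1/m_S$. Given data $(f^\delta,g^\delta), (\tilde f^\delta,\tilde g^\delta) \in (Y^\delta)' \times (X^\delta)'$ with corresponding solutions $(\lambda^\delta,u^\delta)$ and $(\tilde\lambda^\delta,\tilde u^\delta)$, I want to bound $\|u^\delta - \tilde u^\delta\|_{X^\delta} + \|\lambda^\delta - \tilde\lambda^\delta\|_{Y^\delta}$ by a constant times $\|f^\delta - \tilde f^\delta\|_{(Y^\delta)'} + \|g^\delta - \tilde g^\delta\|_{(X^\delta)'}$.

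The key steps, in order, are as follows. First, using Proposition~\ref{prop:equivSchur}, observe that $u^\delta$ solves $S^\delta_{f^\delta,g^\delta} u^\delta = 0$ and $\tilde u^\delta$ solves $S^\delta_{\tilde f^\delta,\tilde g^\delta} \tilde u^\delta = 0$; write out the difference $S^\delta_{f^\delta,g^\delta} w - S^\delta_{\tilde f^\delta,\tilde g^\delta} w$ for a fixed $w$, which equals $(g^\delta - \tilde g^\delta) - (\diff_t^\delta)'\big((A_Y^\delta)^{-1}(f^\delta - \diff_t^\delta w) - (A_Y^\delta)^{-1}(\tilde f^\delta - \diff_t^\delta w)\big)$, and bound its $(X^\delta)'$-norm by $\|g^\delta - \tilde g^\delta\|_{(X^\delta)'} + \tfrac{1}{m_A}\|f^\delta - \tilde f^\delta\|_{(Y^\delta)'}$, since $\|(\diff_t^\delta)' v^*\|_{(X^\delta)'} \le \|v^*\|_{(Y^\delta)'}$ by the definition of $\|\bigcdot\|_{X^\delta}$ (the temporal-derivative term appears in the norm). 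Second, invoke strong monotonicity of $S^\delta_{\tilde f^\delta,\tilde g^\delta}$ (constant $m_S$) together with the fact that $S^\delta_{\tilde f^\delta,\tilde g^\delta} \tilde u^\delta = 0 = S^\delta_{f^\delta,g^\delta} u^\delta = S^\delta_{\tilde f^\delta,\tilde g^\delta} u^\delta + (S^\delta_{f^\delta,g^\delta} - S^\delta_{\tilde f^\delta,\tilde g^\delta}) u^\delta$ to get $\|u^\delta - \tilde u^\delta\|_{X^\delta} \le \tfrac{1}{m_S}\big(\|g^\delta - \tilde g^\delta\|_{(X^\delta)'} + \tfrac{1}{m_A}\|f^\delta - \tilde f^\delta\|_{(Y^\delta)'}\big) \le \tfrac{1}{m_S}(1 + \tfrac{1}{m_A})\big(\|f^\delta - \tilde f^\delta\|_{(Y^\delta)'} + \|g^\delta - \tilde g^\delta\|_{(X^\delta)'}\big)$. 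Third, from $\lambda^\delta = (A_Y^\delta)^{-1}(f^\delta - \diff_t^\delta u^\delta)$ and the analogous formula for $\tilde\lambda^\delta$, use Lipschitz continuity of $(A_Y^\delta)^{-1}$ (constant $1/m_A$) plus $\|\diff_t^\delta(u^\delta - \tilde u^\delta)\|_{(Y^\delta)'} \le \|u^\delta - \tilde u^\delta\|_{X^\delta}$ (again from the definition of the norm on $X^\delta$) to obtain $\|\lambda^\delta - \tilde\lambda^\delta\|_{Y^\delta} \le \tfrac{1}{m_A}\big(\|f^\delta - \tilde f^\delta\|_{(Y^\delta)'} + \|u^\delta - \tilde u^\delta\|_{X^\delta}\big)$. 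Fourth, add the two bounds: the $\lambda$-estimate contributes $\tfrac{1}{m_A}\|f^\delta - \tilde f^\delta\|_{(Y^\delta)'}$ directly and $\tfrac{1}{m_A}$ times the $u$-bound, while the $u$-bound stands on its own; combining and comparing with the claimed constant $L_{\NN^{-1}} = \tfrac{1}{m_A} + \max(1,\tfrac{1}{m_A})\big(\tfrac{1}{m_S}(1+\tfrac{1}{m_A})\big)$ — where the $\max(1,\tfrac{1}{m_A})$ factor accounts for the $u$-bound being weighted by $1 + \tfrac{1}{m_A} \le \max(1,\tfrac{1}{m_A})\cdot(\text{something})$, or more precisely for reusing the $u$-estimate once with coefficient $1$ (in $\|u - \tilde u\|$) and once with coefficient $\tfrac{1}{m_A}$ (inside $\|\lambda - \tilde\lambda\|$) — closes the argument. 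Invertibility of $\NN^\delta$ itself follows from the equivalence in Proposition~\ref{prop:equivSchur}: $S^\delta$ is invertible by Lemma~\ref{lem:Schur}, so $u^\delta$ exists and is unique, and then $\lambda^\delta$ is determined.

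The main obstacle is purely bookkeeping: getting the arithmetic of the constants to match the stated $L_{\NN^{-1}}$ exactly, in particular tracking how the factor $\max(1,\tfrac{1}{m_A})$ arises from bounding $1 + \tfrac{1}{m_A}$ against the recycled $u$-estimate, and being careful that all dual-norm estimates use the mesh-dependent norm $\|\bigcdot\|_{X^\delta}$ correctly — specifically the inequalities $\|(\diff_t^\delta)' v^*\|_{(X^\delta)'} \le \|v^*\|_{(Y^\delta)'}$ and $\|\diff_t^\delta z\|_{(Y^\delta)'} \le \|z\|_{X^\delta}$, both of which are immediate from the definition $\|z\|_{X^\delta}^2 = \|E_X^\delta z\|_Y^2 + \|(E_Y^\delta)'\diff_t z\|_{(Y^\delta)'}^2 + \|\gamma_T z\|_H^2$. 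No genuinely hard analysis is involved; everything reduces to the Schur-complement reduction plus the two one-line estimates from Section~\ref{sec:LipMono}.
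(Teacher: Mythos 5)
Your proposal is correct and follows the same route as the paper: reduce to the Schur complement $S^\delta$ via Proposition~\ref{prop:equivSchur}, bound the perturbation $S^\delta_{f,g}-S^\delta_{\tilde f,\tilde g}$ in $(X^\delta)'$-norm using $\|(\diff_t^\delta)'\|\le1$ and $L_{(A_Y^\delta)^{-1}}=1/m_A$, invoke strong monotonicity of $S^\delta$ to get the $u$-estimate, and then recover $\lambda$ from $(A_Y^\delta)^{-1}$. The one small wobble is in your explanation of where $\max(1,\tfrac{1}{m_A})$ comes from: it arises not from the total weight $1+\tfrac{1}{m_A}$ on the $u$-estimate (that factor is the separate $(1+\tfrac{1}{m_A})$ already visible in $L_{\NN^{-1}}$), but from bounding the mixed expression $\|g^\delta-\tilde g^\delta\|_{(X^\delta)'}+\tfrac{1}{m_A}\|f^\delta-\tilde f^\delta\|_{(Y^\delta)'}$ appearing in your $u$-estimate by $\max(1,\tfrac{1}{m_A})$ times the sum norm on the data — if instead you upgrade this to $(1+\tfrac{1}{m_A})$ as you do at the end of your step two, you land on a strictly larger constant than the one stated.
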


\begin{proof} For $(f^\delta,g^\delta), (\tilde{f}^\delta,\tilde{g}^\delta) \in (Y^\delta)' \times (X^\delta)'$,  consider
$$
\NN^\delta \left[\begin{array}{@{}c@{}} \theta^\delta \\ u^\delta \end{array}\right]=\left[\begin{array}{@{}c@{}} f^\delta \\ g^\delta \end{array}\right], \quad
\NN^\delta \left[\begin{array}{@{}c@{}} \tilde{\theta}^\delta \\ \tilde{u}^\delta \end{array}\right]=\left[\begin{array}{@{}c@{}} \tilde{f}^\delta \\ \tilde{g}^\delta \end{array}\right].
$$
or, equivalently,
$$
S^\delta u^\delta=0,\, \theta^\delta=(A_Y^\delta)^{-1}(f^\delta-\diff_t^\delta u^\delta),\quad
\tilde{S}^\delta \tilde{u}^\delta=0,\, \tilde{\theta}^\delta=(A_Y^\delta)^{-1}(\tilde{f}^\delta-\diff_t^\delta \tilde{u}^\delta)
$$
where $\tilde{S}^\delta:=S_{\tilde{f}^\delta,\tilde{g}^\delta}^\delta$.

Using that $\|(\diff^\delta_t)'\|_{Y^\delta \rightarrow (X^\delta)'} \leq 1$, and $(A_Y^\delta)^{-1}\colon (Y^\delta)' \rightarrow Y^\delta$ is Lipschitz continuous with constant $\frac{1}{m_A}$, one  infers that
\begin{align*}
\|S^\delta \widetilde{u}^\delta\|_{(X^\delta)'}&=\|S^\delta \widetilde{u}^\delta-\widetilde{S}^\delta \widetilde{u}^\delta\|_{(X^\delta)'}\\ & \leq
\|g^\delta-\widetilde{g}^\delta\|_{(X^\delta)'}+\|(A_Y^\delta)^{-1}(\widetilde{f}^\delta-\diff_t^\delta \widetilde{u}^\delta)-(A_Y^\delta)^{-1}(f^\delta-\diff_t^\delta \widetilde{u}^\delta)\|_{Y^\delta}
\\ & \leq \|g^\delta-\widetilde{g}^\delta\|_{(X^\delta)'}+\tfrac{1}{m_A} \|f^\delta-\widetilde{f}^\delta\|_{(Y^\delta)'},
\end{align*}
and so
\begin{align*}
m_{S} \|\widetilde{u}^\delta-u^\delta\|_{X^\delta}^2 &\leq \big(S^\delta \widetilde{u}^\delta-S^\delta u^\delta \big)(\widetilde{u}^\delta-u^\delta) \\
&\leq
\big(\|g^\delta-\widetilde{g}^\delta\|_{(X^\delta)'}+\tfrac{1}{m_A}\|f^\delta-\widetilde{f}^\delta\|_{(Y^\delta)'}\big)\|\widetilde{u}^\delta-u^\delta\|_{X^\delta},
\end{align*}
i.e.,
\be \label{eq:u}
\|\widetilde{u}^\delta-u^\delta\|_{X^\delta} \leq \tfrac{1}{m_{S}}\big(\|g^\delta-\widetilde{g}^\delta\|_{(X^\delta)'}+\tfrac{1}{m_A}\|f^\delta-\widetilde{f}^\delta\|_{(Y^\delta)'}\big).
\ee
Similarly, we infer that
\begin{align} \nonumber
\|\theta^\delta-\widetilde{\theta}^\delta\|_{Y^\delta} &\leq \tfrac{1}{m_A} \big(\|f^\delta-\widetilde{f}^\delta\|_{(Y^\delta)'}+\|\diff_t^\delta (u^\delta-\widetilde{u}^\delta)\|_{(Y^\delta)'}\big)\\ \label{eq:lambda}
&\leq \tfrac{1}{m_A} \big(\|f^\delta-\widetilde{f}^\delta\|_{(Y^\delta)'}+\|u^\delta-\widetilde{u}^\delta\|_{X^\delta}).
\end{align}
The combination of \eqref{eq:u} and \eqref{eq:lambda} completes the proof.
\end{proof}

As has been already noticed earlier, invertibility of $B_e\colon X \rightarrow Y' \times H$ is well-known, even for a larger class of operators $A\colon Y \rightarrow Y'$ than the Lipschitz continuous and strongly monotone operators that we consider. For this class of operators $A$, however, we could not find the following fundamental result about the Lipschitz continuous dependency of the solution $u$ of $B_e u= (\ell, u_0)$ on the data.

\begin{corollary} \label{corol:Lipstab} $B_e^{-1}\colon Y' \times H \rightarrow X$ is \emph{Lipschitz continuous} with constant $L_{B_e^{-1}}=\frac{1}{m_S}(1+\frac{1}{m_A})$.
\end{corollary}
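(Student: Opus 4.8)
The plan is to exploit the identification established in Theorem~\ref{thm:basic}: for data $(\ell,u_0) \in Y' \times H$, the solution $u = B_e^{-1}(\ell,u_0) \in X$ coincides with the second component $u^\delta$ (for $\delta = \mbox{`\,\,'}$, i.e.\ the continuous setting, which is included in $\Delta$) of the solution of the system \eqref{eq:BEN}. By Theorem~\ref{thm:basic}, that system reads $N\left[\begin{smallmatrix} \lambda\\ u\end{smallmatrix}\right] = \left[\begin{smallmatrix} f\\ g\end{smallmatrix}\right]$ with the specific right-hand side $f = \ell$ and $g = -(\ell + \gamma_0' u_0)$. Thus I would apply Corollary~\ref{corol:stability} — or more directly the estimate \eqref{eq:u} in its proof, which is exactly the Lipschitz bound for the $u$-component of the Schur-reduced system — with $\delta = \mbox{`\,\,'}$, so that $\|\bigcdot\|_{X,\delta} = \|\bigcdot\|_X$, $\|\bigcdot\|_{(X^\delta)'} = \|\bigcdot\|_{X'}$, and $\|\bigcdot\|_{(Y^\delta)'} = \|\bigcdot\|_{Y'}$.

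The key steps, in order: first, given two data pairs $(\ell,u_0)$ and $(\tilde\ell,\tilde u_0)$, let $u = B_e^{-1}(\ell,u_0)$ and $\tilde u = B_e^{-1}(\tilde\ell,\tilde u_0)$, and record via Theorem~\ref{thm:basic} that these solve \eqref{eq:BEN} with right-hand sides determined by the respective data. Second, read off from \eqref{eq:u} (instantiated at $\delta = \mbox{`\,\,'}$) that
$$
\|u - \tilde u\|_X \;\le\; \tfrac{1}{m_S}\Big(\|g - \tilde g\|_{X'} + \tfrac{1}{m_A}\|\ell - \tilde\ell\|_{Y'}\Big),
$$
where $g - \tilde g = -(\ell - \tilde\ell) - \gamma_0'(u_0 - \tilde u_0)$. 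Third, estimate $\|g - \tilde g\|_{X'}$: by duality, $\|\gamma_0'(u_0 - \tilde u_0)\|_{X'} = \|\gamma_0 E_X^\delta\|_{X \to H}\,\|u_0 - \tilde u_0\|_H \le \|u_0 - \tilde u_0\|_H$ using Corollary~\ref{corol:2} with $X^\delta = Y^\delta = \bigcdot$ (here $X = Y$ fails in general, so one must instead note directly that for the continuous space $X \subset Y$ trivially holds, so Corollary~\ref{corol:2} applies; alternatively use \eqref{eq:embedding} with $C_\cJ$, but the clean constant comes from Corollary~\ref{corol:2}), and $\|\ell - \tilde\ell\|_{X'} \le \|\ell - \tilde\ell\|_{Y'}$ since $\|\bigcdot\|_Y \le \|\bigcdot\|_X$ on $X$. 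Combining, $\|g - \tilde g\|_{X'} \le \|\ell - \tilde\ell\|_{Y'} + \|u_0 - \tilde u_0\|_H$. Fourth, substitute to obtain
$$
\|u - \tilde u\|_X \;\le\; \tfrac{1}{m_S}\Big(\|\ell - \tilde\ell\|_{Y'} + \|u_0 - \tilde u_0\|_H + \tfrac{1}{m_A}\|\ell - \tilde\ell\|_{Y'}\Big)
\;\le\; \tfrac{1}{m_S}\big(1 + \tfrac{1}{m_A}\big)\|(\ell - \tilde\ell, u_0 - \tilde u_0)\|_{Y' \times H},
$$
using the product norm convention $\|(\cdot,\cdot)\| = \|\cdot\| + \|\cdot\|$ and $1 \le 1 + \tfrac1{m_A}$; this is precisely $L_{B_e^{-1}} = \tfrac1{m_S}(1 + \tfrac1{m_A})$.

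The main obstacle — really the only subtle point — is handling the $\gamma_0'$ term cleanly. One must be careful that $X$ and $Y$ are genuinely different spaces, so $\|\gamma_0\|_{X \to H}$ needs the bound from Corollary~\ref{corol:2} (valid since $X \subset Y$) rather than some generic embedding constant; getting the constant $1$ here is what makes the final constant match the claimed $\tfrac1{m_S}(1+\tfrac1{m_A})$ exactly. Everything else is a direct specialization of Corollary~\ref{corol:stability} and its proof to the continuous case together with the elementary norm comparison $\|\bigcdot\|_{Y'} \ge \|\bigcdot\|_{X'}$ on $Y' \subset X'$ (dual to $\|\bigcdot\|_X \ge \|\bigcdot\|_Y$). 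I do not anticipate any genuinely hard estimate; the proof is essentially bookkeeping that translates the already-proven abstract stability of $N^{-1}$ into a statement about $B_e^{-1}$ via the equivalence of Theorem~\ref{thm:basic}.
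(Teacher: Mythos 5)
Your proof is correct and follows essentially the same route as the paper: both instantiate \eqref{eq:u} at $\delta = \mbox{`\,\,'}$ with the right-hand sides from Theorem~\ref{thm:basic}, bound $\|\gamma_0'\|_{H\to X'}\le 1$ via Corollary~\ref{corol:2} (valid since $X\subset Y$), use $\|\bigcdot\|_{X'}\le\|\bigcdot\|_{Y'}$, and absorb the resulting constants. Your aside about $X^\delta=Y^\delta$ is a momentary slip -- Corollary~\ref{corol:2} only requires the inclusion $X^\delta\subset Y^\delta$, which you correctly settle on -- and does not affect the argument.
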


\begin{proof} Let $B_e u=(\ell,u_0)$, $B_e \widetilde{u}=(\widetilde{\ell},\widetilde{u}_0)$.
Theorem~\ref{thm:basic} and Proposition~\ref{prop:equivSchur} show that $S_{\ell,-(\ell+\gamma_0' u_0)} u=0$ and $S_{\widetilde{\ell},-(\widetilde{\ell}+\gamma_0' \widetilde{u}_0)} \widetilde{u}=0$. Using that $\|\bigcdot\|_{X'} \leq \|\bigcdot\|_{Y'}$, and, as a consequence of Corollary~\ref{corol:2}, $\|\gamma_0'\|_{H \rightarrow X'}\leq 1$, equation \eqref{eq:u} shows that 
$$
\|\widetilde{u}-u\|_X \leq \tfrac{1}{m_S}((1+\tfrac{1}{m_A})\|\widetilde{\ell}-\ell\|_{Y'}+\|\widetilde{u}_0-u_0\|_H),
$$
which completes the proof.
\end{proof}

\begin{remark}[Time periodic problems] Instead of with the initial condition $u(0)=u_0$, the first order evolution problem~\eqref{eq:IVP} can also be completed by the condition $u(t+T)=u(t)$. Applications are found in e.g.~electric engineering \cite{70.32}. By taking $X:=L_2(\Xi;V) \cap H^1_{\text{per}}(\Xi;V')$, $\|\bigcdot\|_{X,\delta}:=\sqrt{\| \bigcdot\|^2_{Y}+\|(E_Y^\delta)' \diff_t \bigcdot\|^2_{(Y^\delta)'}}$\,, $B_e:=B$, $\NN:=\left[\begin{array}{@{}cc@{}} A & \diff_t\\ \diff_t'& -A\end{array}\right]\colon Y \times X \rightarrow Y' \times X'
$, and
$\NN
\left[\begin{array}{@{}c@{}} \theta \\ u \end{array}\right]=
\left[\begin{array}{@{}c@{}} \ell \\ -\ell \end{array}\right]$, using that $\diff_t+\diff_t'=0$ as mapping $X \rightarrow X'$ the preceding (and subsequent) analysis easily generalizes to this case.
\end{remark}
\section{Quasi-optimal discretizations} \label{sec:5}

Below, in \eqref{eq:LBB}, we impose a uniform `inf-sup' or Ladyzhenskaya–Babu\v{s}ka-Brezzi (LBB) condition \eqref{eq:LBB}, under which the norms $\|\bigcdot\|_{X^\delta}$ and $\|E_X^\delta \bigcdot\|_{X}$ on $X^\delta$ are \emph{uniformly equivalent}. 
For $(f,g) \in Y' \times X'$, considering the systems
\be \label{eq:36}
\NN \left[\begin{array}{@{}c@{}} \theta \\ u \end{array}\right]=\left[\begin{array}{@{}c@{}} f \\ g \end{array}\right],\quad
\NN^\delta \left[\begin{array}{@{}c@{}} \theta^\delta \\ u^\delta \end{array}\right]=\left[\begin{array}{@{}c@{}} (E_Y^\delta)' f \\ (E_X^\delta)' g \end{array}\right],
\ee
we show that under this condition the Galerkin approximation $(\theta^\delta,u^\delta)$ for $(\theta,u)$ is \emph{quasi-optimal}.

\begin{proposition}[Quasi-optimal Galerkin approximation] \label{prop:9} Assume that
\be \label{eq:LBB}
\gamma_\Delta=\gamma_\Delta\big((X^\delta,Y^\delta)_{\delta \in \Delta}\big):=\inf_{\delta \in \Delta} \inf_{\{z \in X^\delta\colon \diff_t z \neq 0\}} \frac{\|\diff_t^\delta z \|_{(Y^\delta)'}}{\|\diff_t  E_X^\delta z\|_{Y'}}>0.
\ee
Then for the solutions of the systems in \eqref{eq:36}, we have
\begin{align*}
\|\theta-E_Y^\delta \theta^\delta\|_Y+\|u&-E_X^\delta u^\delta\|_X 
\\ &\leq \big(1+\tfrac{L_{N^{-1}} L_N}{\gamma_\Delta^2}\big)
\inf_{(\bar{\theta}^\delta,\bar{u}^\delta) \in Y^\delta \times X^\delta} 
\|\theta-E_Y^\delta \bar{\theta}^\delta\|_Y+\|u-E_X^\delta \bar{u}^\delta\|_X.
\end{align*}
If for some $(\ell,u_0) \in Y' \times H$, $(f,g)=(\ell,-(\ell+\gamma_0' u_0))$ (i.e.~$B_e u=(\ell, u_0)$), and $X^\delta \subset Y^\delta$, then
$$
\|u-E_X^\delta u^\delta\|_X \leq 2 \big(1+\tfrac{L_{N^{-1}} L_N}{\gamma_\Delta^2}\big) \inf_{\bar{u}^\delta \in  X^\delta} 
\|u-E_X^\delta \bar{u}^\delta\|_X.
$$
\end{proposition}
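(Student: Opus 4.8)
The plan is to combine the Lipschitz continuity of $\NN^\delta$ (Proposition~\ref{prop:Lip}), the Lipschitz continuity of $(\NN^\delta)^{-1}$ (Corollary~\ref{corol:stability}), and a \emph{norm-equivalence} consequence of the LBB condition \eqref{eq:LBB}, to run the standard C\'{e}a-type argument for quasi-optimality. First I would observe that \eqref{eq:LBB} gives, for all $\delta \in \Delta$ and all $z \in X^\delta$,
$$
\gamma_\Delta \|\diff_t E_X^\delta z\|_{Y'} \leq \|\diff_t^\delta z\|_{(Y^\delta)'} \leq \|\diff_t E_X^\delta z\|_{Y'},
$$
the right inequality being immediate since $(E_Y^\delta)'$ has norm $\leq 1$; hence $\gamma_\Delta \|E_X^\delta z\|_X \leq \|z\|_{X^\delta} \leq \|E_X^\delta z\|_X$, i.e. the discrete norm on $X^\delta$ and the restriction of $\|\bigcdot\|_X$ are uniformly equivalent (with the $Y$- and $\gamma_T$-components untouched). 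Consequently the norm on $(X^\delta)'$ is uniformly equivalent to the dual-restriction norm as well.

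Next, for the first (two-component) estimate I would argue as follows. Write $(\lambda,u)$ for the continuous solution and $(\lambda^\delta,u^\delta)$ for the Galerkin solution in \eqref{eq:36}. For any $(\bar\lambda^\delta,\bar u^\delta) \in Y^\delta \times X^\delta$ the Galerkin orthogonality says that $\NN^\delta$ applied to $(\lambda^\delta,u^\delta)$ agrees with the restriction of $\NN$ applied to $(\lambda,u)$ tested against $Y^\delta\times X^\delta$; subtracting the equation tested at $(\bar\lambda^\delta,\bar u^\delta)$, one bounds the discrete error $(\lambda^\delta-\bar\lambda^\delta, u^\delta-\bar u^\delta)$ in the $Y^\delta\times X^\delta$-norm by $L_{(\NN^\delta)^{-1}}$ times the residual, which in turn is controlled by $L_{\NN}$ times the distance from $(\lambda,u)$ to $(E_Y^\delta\bar\lambda^\delta, E_X^\delta\bar u^\delta)$ — except that the residual is measured against $N$ acting on $Y^\delta\times X^\delta$, and one must convert $\|\diff_t^\delta(\bar u^\delta - u^\delta)\|_{(Y^\delta)'}$-type quantities appearing in the Lipschitz bound into $X$-norms, losing a factor $1/\gamma_\Delta$ on the test side and again on the trial side, explaining the $\gamma_\Delta^{-2}$. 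Then a triangle inequality $\|(\lambda,u)-(E_Y^\delta\lambda^\delta,E_X^\delta u^\delta)\| \leq \|(\lambda,u)-(E_Y^\delta\bar\lambda^\delta,E_X^\delta\bar u^\delta)\| + \|E_Y^\delta(\bar\lambda^\delta-\lambda^\delta)\| + \|E_X^\delta(\bar u^\delta - u^\delta)\|$, together with $\|E_X^\delta\bigcdot\|_X \leq \|\bigcdot\|_{X^\delta}$, yields the factor $1 + L_{\NN^{-1}}L_{\NN}/\gamma_\Delta^2$ after taking the infimum over $(\bar\lambda^\delta,\bar u^\delta)$.

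For the second estimate I would specialize to $(f,g)=(\ell,-(\ell+\gamma_0'u_0))$, so that by Theorem~\ref{thm:basic} the continuous solution satisfies $\lambda=u=B_e^{-1}(\ell,u_0)$, and restrict the infimum in the first estimate to the diagonal $\bar\lambda^\delta=\bar u^\delta$, which is legitimate precisely because $X^\delta\subset Y^\delta$; this replaces the two-term infimum by $2\inf_{\bar u^\delta\in X^\delta}\|u-E_X^\delta\bar u^\delta\|_X$ (using $\lambda=u$, and $\|E_Y^\delta\bigcdot\|_Y\leq\|E_X^\delta\bigcdot\|_X$ on $X^\delta$), and dropping the nonnegative $\lambda$-error on the left gives the claim. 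The main obstacle — and the only place that needs genuine care — is the bookkeeping in the residual estimate: tracking exactly where the two powers of $\gamma_\Delta^{-1}$ enter when passing between $\|\diff_t^\delta\bigcdot\|_{(Y^\delta)'}$ and $\|\diff_t E_X^\delta\bigcdot\|_{Y'}$ on both the trial and the test arguments, and making sure the constant comes out as stated rather than with an extra stray factor.
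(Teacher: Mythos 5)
Your approach is the same as the paper's: use \eqref{eq:LBB} to relate $\|\bigcdot\|_{X^\delta}$ and $\|E_X^\delta\bigcdot\|_X$, exploit the Galerkin identity $\NN^\delta(\lambda^\delta,u^\delta)=\big((E_Y^\delta)',(E_X^\delta)'\big)\NN(\lambda,u)$, bound the discrete error by $L_{\NN^{-1}}$ times the discrete residual, convert to continuous norms via Lipschitz continuity of $\NN$, and close with a triangle inequality; restricting to the diagonal $\bar{\lambda}^\delta=\bar{u}^\delta$ and using $\lambda=u$, $\|\bigcdot\|_Y\leq\|\bigcdot\|_X$ for the second estimate is also exactly what the paper does.

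There is, however, a bookkeeping slip that would derail the computation if carried out literally. After correctly deriving $\gamma_\Delta\|E_X^\delta z\|_X \leq \|z\|_{X^\delta} \leq \|E_X^\delta z\|_X$, you invoke $\|E_X^\delta\bigcdot\|_X \leq \|\bigcdot\|_{X^\delta}$ in the triangle-inequality step, which is the \emph{reverse} of your own two-sided bound; what that step actually needs is $\|E_X^\delta\bigcdot\|_X \leq \gamma_\Delta^{-1}\|\bigcdot\|_{X^\delta}$, and this is precisely where one of the two powers of $\gamma_\Delta^{-1}$ enters. The other power enters in the residual step: by Galerkin orthogonality the discrete residual at $(\bar{\lambda}^\delta,\bar{u}^\delta)$ equals $\big((E_Y^\delta)',(E_X^\delta)'\big)$ applied to $\NN(E_Y^\delta\bar{\lambda}^\delta,E_X^\delta\bar{u}^\delta)-\NN(\lambda,u)$, and the restriction $\big((E_Y^\delta)',(E_X^\delta)'\big)\colon Y'\times X'\rightarrow (Y^\delta)'\times(X^\delta)'$ has norm $\leq \gamma_\Delta^{-1}$; after that, $L_{\NN}$ is applied purely in continuous norms with no further loss. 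Your narrative attributes both powers of $\gamma_\Delta^{-1}$ to the residual estimate (``on the test side and again on the trial side''); that accounting is inconsistent with the wrong-direction inequality in the triangle step, and with the correct split as above the final constant $1+\tfrac{L_{\NN^{-1}}L_\NN}{\gamma_\Delta^2}$ comes out as stated.
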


\begin{proof}
With the shorthand notations $\vec{E}^\delta:=(E_Y^\delta,E_X^\delta)$, $\vec{u}^\delta:=(\theta^\delta,u^\delta)$, and $\vec{u}:=(\theta,u)$, we have
$\|\vec{E}^\delta\|_{Y^\delta \times X^\delta \rightarrow Y \times X}=
\|(\vec{E}^\delta)'\|_{Y' \times X' \rightarrow (Y^\delta)' \times (X^\delta)' } \leq \frac{1}{\gamma_\Delta}$, and
 the Galerkin property $(\vec{E}^\delta)' N \vec{u}=N^\delta \vec{u}^\delta$. For arbitrary $\vec{\underline{u}}^\delta \in Y^\delta \times X^\delta$,
we estimate
$$
\|\vec{u}-\vec{E}^\delta \vec{u}^\delta\|_{Y \times X} \leq \|\vec{u}-\vec{E}^\delta \vec{\underline{u}}^\delta\|_{Y \times X}+  \tfrac{1}{\gamma_\Delta} \|\vec{\underline{u}}^\delta-\vec{u}^\delta\|_{Y^\delta \times X^\delta},
$$
and, using Corollary~\ref{corol:stability} and Proposition~\ref{prop:Lip},
\begin{align*}
\|\vec{\underline{u}}^\delta-\vec{u}^\delta\|_{Y^\delta \times X^\delta} &\leq L_{N^{-1}}\|N^\delta \vec{\underline{u}}^\delta -N^\delta \vec{u}^\delta\|_{(Y^\delta)' \times (X^\delta)'}\\
&= L_{N^{-1}}\|(\vec{E}^\delta)' (N \vec{E}^\delta \vec{\underline{u}}^\delta -N \vec{E}^\delta\vec{u}^\delta)\|_{(Y^\delta)' \times (X^\delta)'} \\
&= L_{N^{-1}}\|(\vec{E}^\delta)' (N \vec{E}^\delta \vec{\underline{u}}^\delta -N \vec{u})\|_{(Y^\delta)' \times (X^\delta)'}\\
& \leq L_{N^{-1}} \tfrac{1}{\gamma_\Delta}\| N \vec{E}^\delta \vec{\underline{u}}^\delta -N \vec{u}\|_{Y' \times X'}\\
& \leq L_{N^{-1}} L_N  \tfrac{1}{\gamma_\Delta}\|\vec{E}^\delta \vec{\underline{u}}^\delta - \vec{u}\|_{Y\times X},
\end{align*}
so that
\begin{align*}
\|\theta-E_Y^\delta \theta^\delta\|_Y+\|u&-E_X^\delta u^\delta\|_X 
\\ &\leq \big(1+\tfrac{L_{N^{-1}} L_N}{\gamma_\Delta^2}\big)
\inf_{(\bar{\theta}^\delta,\bar{u}^\delta) \in Y^\delta \times X^\delta} 
\|\theta-E_Y^\delta \bar{\theta}^\delta\|_Y+\|u-E_Y^\delta \bar{u}^\delta\|_X.
\end{align*}
The final statement is a consequence of Theorem~\ref{thm:basic} and $X^\delta \subset Y^\delta$.
\end{proof}

Notice that the inf-sup condition \eqref{eq:LBB} depends on $(X^\delta,Y^\delta)_{\delta \in \Delta}$, but not on the operator $A$.
It is the same condition as studied in previous works about minimal residual discretizations for the case of a linear $A$.

\subsection{`Inf-sup' stability condition \eqref{eq:LBB}}
We briefly summarize results from \cite{11,249.99,249.992,249.991} about situations for which validity of \eqref{eq:LBB} with $\dim Y^\delta \lesssim \dim X^\delta$ has been verified. Concerning this last condition, \eqref{eq:LBB} holds obviously true with $\gamma_\Delta=1$ when $Y^\delta=Y$, whilst $\dim Y^\delta \lesssim \dim X^\delta$ means that the dimension of the Galerkin system to be solved is of the order of the dimension of the trial space $X^\delta$.
\begin{example}[Tensor product setting] \label{ex:tensor}
For $\delta \in \Delta \setminus \delta_*$, let 
$$
X^\delta = X_t^\delta \otimes X_x^\delta,\quad Y^\delta = Y_t^\delta \otimes Y_x^\delta.
$$
Then a tensor product argument shows that with
\be \label{eq:tensor}
\gamma_{\Delta,t}:=\inf_{\delta \in \Delta} \! \inf_{\{z_t \in X_t^\delta\colon \tfrac{\diff z_t}{\diff t} \neq 0\}}\hspace*{-1.2em}\frac{\sup_{0 \neq v_t \in Y_t^\delta} \frac{\int_\Xi \tfrac{\diff z_t}{\diff t} v_t\diff t}{{\|v_t\|_{L_2(\Xi)}}}}{\|\tfrac{\diff z_t}{\diff t}\|_{L_2\Xi)}}, \,\,
\gamma_{\Delta,x}:=\inf_{\delta \in \Delta}\inf_{0 \neq z_x \in X_x^\delta}\hspace*{-0.8em}\frac{\sup_{0 \neq v_x \in Y_x^\delta} \frac{\langle z_x, v_x\rangle_H}{\|v_x\|_V}}{\|z_x\|_{V'}},
\ee
it holds that $\gamma_{\Delta} \geq \gamma_{\Delta,t} \gamma_{\Delta,x}$.
Clearly, $\gamma_{\Delta,t} >0$ if $\frac{\diff}{\diff t} X_t^\delta \subset Y_t^\delta$ (which is, however, not a necessary condition (see \cite[Prop.~6.1]{11} for an example)).

If $Y_x^\delta=X_x^\delta$, then $\gamma_{\Delta,x}=\sup_{\delta \in \Delta} \|P^\delta\|_{V \rightarrow V}^{-1}$ where $P^\delta$ is the $H$-orthogonal projector onto $X_x^\delta$.
For $(H,V)=(L_2(\Omega),H^1_0(\Omega))$ for some domain $\Omega \subset \R^d$, uniform boundedness in $H^1(\Omega)$-norm of the $L_2(\Omega)$-orthogonal projector onto finite element spaces $X_x^\delta$ is an intensively studied subject.
Recently in \cite{64.596} it was shown that for $d<7$ this uniform boundedness holds true for the family of all Lagrange finite element spaces of arbitrary degree w.r.t.~all conforming newest vertex bisection meshes that can be created from an initial mesh.
\end{example}

Above example shows that \eqref{eq:LBB} with $\dim Y^\delta \lesssim \dim X^\delta$ can be realized for trial and test spaces being finite element spaces w.r.t.~partitions of $\Sigma=\Xi \times \Omega$ that are products of a partition of $\Xi$ and a partition of $\Omega$. The next example shows that this setting can be generalized to the situation that in different time-intervals different spatial partitions are employed.

\begin{example}[Time-slab setting]
Let $(\bar{X}^\delta, \bar{Y}^\delta)_{\delta \in \bar{\Delta}}$ be a family of pairs of closed subspaces of $X$ and $Y$
for which $\gamma_{\bar{\Delta}}=\gamma_{\bar{\Delta}} \big((\bar{Y}^\delta \times \bar{X}^\delta)_{\delta \in \bar{\Delta}}\big)>0$.
Then if, for $\delta \in \Delta$, $X^\delta$ and $Y^\delta$ are such that for some
finite partition $\Xi^\delta=([t_{i-1}^\delta,t_{i}^\delta])_i$ of $\Xi$, with
$G^\delta_i(t):=t_{i-1}^\delta+\frac{t}{T}(t_{i}^\delta-t_{i-1}^\delta)$ and arbitrary $\delta_i \in \bar{\Delta}$ it holds that
\begin{align*}
X^\delta \subseteq \{u \in C(\overline{\Xi};V) &\colon u|_{(t^\delta_{i-1},t^\delta_i)} \circ G^\delta_i \in \bar{X}^{\delta_i}\},\\
Y^\delta \supseteq \{v \in L_2(\Xi;V) &\colon v|_{(t^\delta_{i-1},t^\delta_i)} \circ G^\delta_i \in \bar{Y}^{\delta_i}\},
\end{align*}
then $\gamma^\delta \geq \gamma_{\bar{\Delta}}>0$. 
\end{example}

\begin{remark} Another generalization outside the field of finite elements of the `full' tensor product setting from Example~\ref{eq:tensor} is that to `sparse' tensor products, see \cite{11}.
With the application of \emph{wavelets-in-time} instead of finite elements, this `sparse' setting has been generalized by allowing (adaptive)  `refinements' (more accurately called `enrichments') that are simultaneously localized in time \emph{and} space, see \cite{249.991}. 
\end{remark}

\section{An a posteriori verifiable condition for quasi-optimality} \label{sec:Pjotr}
For $(\ell,u_0) \in Y' \times H$, and $Y^\delta \times X^\delta \subset Y \times X$, recall the equations 
$$
B_e u=(\ell,u_0),\quad \NN^\delta
\left[\begin{array}{@{}c@{}} \theta^\delta \\ u^\delta \end{array}\right]=
\left[\begin{array}{@{}c@{}} (E_Y^\delta)'\ell \\ -(E_X^\delta)'(\ell+\gamma_0' u_0) \end{array}\right],
$$
or equivalently,
$$
Su=S_{\ell,-(\ell+\gamma_0' u_0)} u=0,\quad S^\delta u^\delta=S^\delta_{(E_Y^\delta)'\ell,-(E_X^\delta)'(\ell+\gamma_0' u_0)} u^\delta=0.
$$

As we have seen in Proposition~\ref{prop:9}, under the uniform inf-sup condition \eqref{eq:LBB}, 
$E_X^\delta u^\delta$ is, w.r.t.~the norm on $X$, a quasi-best approximation to $u$. 

Since $u$ may have singularities that are simultaneously localized in time and space, we would like to consider families of finite element partitions that contain partitions that are locally refined simultaneously in time and space. Such partitions, however, do not allow a subdivision into time-slabs.
As will be shown in \cite[Appendix A]{75.293}, with such partitions \eqref{eq:LBB} \emph{cannot} be expected to be valid.
 For that reason in the current section we will investigate an approach that circumvents condition \eqref{eq:LBB}.
We will derive an \emph{a posteriori} verifiable condition \eqref{eq:pjotr} for quasi-optimality.  Whereas  \eqref{eq:LBB} guarantees quasi-optimality \emph{uniformly in the data} $(\ell,u_0) \in Y' \times H$, \eqref{eq:pjotr} will only ensure quasi-optimality for the \emph{data at hand}. The potential benefit is that the latter kind of quasi-optimality might be realizable for smaller $Y^\delta$.

As a preparation, first we show that  $\|u-E_X^\delta u^\delta\|_{X,\delta} \lesssim \inf_{\bar{u}^\delta \in X^\delta} \|u-E_X^\delta \bar{u}^\delta\|_{X}$ whenever $X^\delta \subset Y^\delta$.

\begin{theorem} \label{thm:quasi-opt} With $C_1:=1+\tfrac{1}{m_S}\Big(1+\sqrt{(1+L_A^2)(1 + \frac{1}{m_A^2})}\, \Big)$, 
for $X^\delta \subset Y^\delta$ it holds that
$$
\max\big(\|u-E_X^\delta u^\delta\|_{X,\delta},\|u_0-\gamma_0 E_X^\delta u^\delta\|_H\big)  \leq C_1 \inf_{\bar{u}^\delta \in X^\delta} \|u-E_X^\delta \bar{u}^\delta\|_{X}.
$$
\end{theorem}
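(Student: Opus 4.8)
The plan is to exploit the already-established Lipschitz continuity and strong monotonicity of $S^\delta$ (Lemma~\ref{lem:Schur}) together with the fact that $u$ itself satisfies a continuous analogue of the Schur equation. First I would recall from Theorem~\ref{thm:basic} and Proposition~\ref{prop:equivSchur} (applied in the continuous setting $\delta=\mbox{`\,\,'}$, which is included in $\Delta$) that $u$ solves $S_{\ell,-(\ell+\gamma_0'u_0)}u=0$, and that $u^\delta$ solves $S^\delta u^\delta=0$ with the discretized data $(f^\delta,g^\delta)=((E_Y^\delta)'\ell,-(E_X^\delta)'(\ell+\gamma_0'u_0))$. The strategy is then a C\'{e}a-type argument purely in $X^\delta$ equipped with $\|\bigcdot\|_{X^\delta}=\|E_X^\delta\bigcdot\|_{X,\delta}$: for an arbitrary $\bar u^\delta\in X^\delta$ write
$$
\|u-E_X^\delta u^\delta\|_{X,\delta}\le \|u-E_X^\delta\bar u^\delta\|_{X,\delta}+\|\bar u^\delta-u^\delta\|_{X^\delta},
$$
and bound the second term by $\tfrac{1}{m_S}\|S^\delta\bar u^\delta\|_{(X^\delta)'}$ using strong monotonicity of $S^\delta$ and $S^\delta u^\delta=0$.

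The crucial step is to estimate $\|S^\delta\bar u^\delta\|_{(X^\delta)'}$. Since $S^\delta u^\delta=0$, it equals $\|S^\delta\bar u^\delta-S^\delta u^\delta\|_{(X^\delta)'}$, but a cleaner route is to compare $S^\delta\bar u^\delta$ directly against the continuous residual: for $v^\delta\in X^\delta$, using the Galerkin/restriction structure $A_X^\delta=(E_X^\delta)'AE_X^\delta$, $\diff_t^\delta=(E_Y^\delta)'\diff_tE_X^\delta$, $\gamma_T^\delta=\gamma_TE_X^\delta$, and $(A_Y^\delta)^{-1}(E_Y^\delta)'\ell$ versus the continuous $A^{-1}(\ell-\diff_t u)=u$ (recall $\lambda=u$), I would rewrite $(S^\delta\bar u^\delta)(v^\delta)$ as $\big(S_{\ell,-(\ell+\gamma_0'u_0)}\bar u^\delta\big)(E_X^\delta v^\delta)$ plus a correction coming from the difference between $(A_Y^\delta)^{-1}(E_Y^\delta)'(\ell-\diff_tE_X^\delta\bar u^\delta)$ and $A^{-1}(\ell-\diff_tE_X^\delta\bar u^\delta)$. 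Because $u$ solves the continuous Schur equation, $\big(S_{\ell,-(\ell+\gamma_0'u_0)}\bar u^\delta\big)=\big(S_{\ell,-(\ell+\gamma_0'u_0)}\bar u^\delta-S_{\ell,-(\ell+\gamma_0'u_0)}u\big)$, whose $X'$-norm is at most $L_S\|\bar u^\delta-u\|_X$ by Lipschitz continuity of the continuous $S$; restricting to $v^\delta\in X^\delta$ and using $\|E_X^\delta v^\delta\|_X\le\|v^\delta\|_{X^\delta}$ when $X^\delta\subset Y^\delta$ (so that the mesh-dependent dual norm dominates, i.e.\ $\|\diff_t^\delta z\|_{(Y^\delta)'}\le\|\diff_tE_X^\delta z\|_{Y'}$) turns this into a bound by $L_S\|\bar u^\delta-u\|_X$.

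The main obstacle is handling the $(A_Y^\delta)^{-1}$ versus $A^{-1}$ discrepancy cleanly and obtaining the stated constant $\sqrt{(1+L_A^2)(1+\tfrac1{m_A^2})}$. The idea is: let $\lambda^{\delta}=(A_Y^\delta)^{-1}(E_Y^\delta)'(\ell-\diff_tE_X^\delta\bar u^\delta)\in Y^\delta$, which is the Galerkin approximation in $Y^\delta$ of the solution $w\in Y$ of $Aw=\ell-\diff_tE_X^\delta\bar u^\delta$; since $A$ is Lipschitz and strongly monotone, this Galerkin approximation satisfies the C\'{e}a estimate $\|w-E_Y^\delta\lambda^\delta\|_Y\le\tfrac{L_A}{m_A}\inf_{\bar\lambda^\delta}\|w-E_Y^\delta\bar\lambda^\delta\|_Y$, but more to the point $w$ itself equals $A^{-1}(\ell-\diff_tE_X^\delta\bar u^\delta)$ and $\|w-u\|_Y=\|A^{-1}(\ell-\diff_tE_X^\delta\bar u^\delta)-A^{-1}(\ell-\diff_tu)\|_Y\le\tfrac1{m_A}\|\diff_t(E_X^\delta\bar u^\delta-u)\|_{Y'}\le\tfrac1{m_A}\|u-E_X^\delta\bar u^\delta\|_X$. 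Collecting the two types of contributions — the $A_X^\delta,\gamma_T^\delta$ part bounded using $\max(1,L_A)$-type factors and Corollary~\ref{corol:2}, and the $(\diff_t^\delta)'\lambda^\delta$ part bounded via the triangle inequality $\|\lambda^\delta-u\|\le\|\lambda^\delta-w\|+\|w-u\|$ (the first piece being a Galerkin error for $A$ on $Y^\delta$ that also vanishes to the order of $\inf_{\bar u^\delta}\|u-E_X^\delta\bar u^\delta\|_X$) — and using $\|(\diff_t^\delta)'\|_{Y^\delta\to(X^\delta)'}\le1$, one arrives after a Cauchy–Schwarz combination of the $Y$- and $Y'$-type terms at the factor $\sqrt{(1+L_A^2)(1+\tfrac1{m_A^2})}$, and hence at $C_1$. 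Finally, $\|u_0-\gamma_0E_X^\delta u^\delta\|_H=\|\gamma_0(u-E_X^\delta u^\delta)\|_H\le\|u-E_X^\delta u^\delta\|_{X,\delta}$ by Corollary~\ref{corol:2} (using $X^\delta\subset Y^\delta$ so that $\|\gamma_0E_X^\delta\|_{X^\delta\to H}\le1$, and the same bound holds for the continuous part), so the second entry of the $\max$ is controlled by the first, completing the proof. I expect the bookkeeping of constants to be the only genuinely delicate point; the structure is a standard perturbed-C\'{e}a argument.
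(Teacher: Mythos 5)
Your high-level scaffolding (triangle inequality, strong monotonicity of $S^\delta$ giving $\|\bar u^\delta-u^\delta\|_{X^\delta}\le\tfrac1{m_S}\|S^\delta\bar u^\delta\|_{(X^\delta)'}$, then bounding the residual) matches the paper's. The problem is in how you propose to estimate $\|S^\delta\bar u^\delta\|_{(X^\delta)'}$, and it is a genuine directionality error, not bookkeeping. You assert $\|E_X^\delta v^\delta\|_X\le\|v^\delta\|_{X^\delta}$, but the inequality goes the other way: since $Y^\delta\subset Y$, one has $\|(E_Y^\delta)'\diff_t E_X^\delta v^\delta\|_{(Y^\delta)'}\le\|\diff_t E_X^\delta v^\delta\|_{Y'}$, hence $\|v^\delta\|_{X^\delta}\le\|E_X^\delta v^\delta\|_X$, with the gap between the two norms potentially large. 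Consequently, when you split $(S^\delta\bar u^\delta)(v^\delta)$ into $(S E_X^\delta\bar u^\delta - S u)(E_X^\delta v^\delta)$ (bounded by $L_S\|u-E_X^\delta\bar u^\delta\|_X\,\|E_X^\delta v^\delta\|_X$) plus an $A^{-1}$-versus-$(A_Y^\delta)^{-1}$ correction, neither piece can be divided by $\|v^\delta\|_{X^\delta}$ to produce the $(X^\delta)'$-dual norm, precisely because $\|E_X^\delta v^\delta\|_X$ is \emph{not} controlled by $\|v^\delta\|_{X^\delta}$.

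The correction term has the same defect. Writing $w=A^{-1}(\ell-\diff_t E_X^\delta\bar u^\delta)$ and $w^\delta=(A_Y^\delta)^{-1}(E_Y^\delta)'(\ell-\diff_t E_X^\delta\bar u^\delta)$, it reads $(\diff_t E_X^\delta v^\delta)(w-E_Y^\delta w^\delta)$. Since $w-E_Y^\delta w^\delta\notin Y^\delta$, this pairing does not factor through $(E_Y^\delta)'\diff_t E_X^\delta v^\delta$; you only get $\|\diff_t E_X^\delta v^\delta\|_{Y'}\|w-E_Y^\delta w^\delta\|_Y$, again the full dual norm of the time derivative rather than the mesh-dependent one. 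The paper circumvents exactly this: it never introduces $A^{-1}$, but instead substitutes $(\ell,u_0)=B_e u$, uses $\gamma_T'\gamma_T=\diff_t+\diff_t'+\gamma_0'\gamma_0$, and rearranges $(S^\delta\bar u^\delta)(\bar u^\delta-u^\delta)$ so that the time derivative always acts on the \emph{approximand} $E_X^\delta\bar u^\delta-u$ and every remaining $\diff_t$-pairing involving $\bar u^\delta-u^\delta$ is tested against an element of $Y^\delta$ (which is where $X^\delta\subset Y^\delta$ enters): the only inverse that ever appears is $(A_Y^\delta)^{-1}$, evaluated on two arguments whose difference is controlled by $L_A$ and $\|(E_Y^\delta)'\diff_t(E_X^\delta\bar u^\delta-u)\|_{(Y^\delta)'}$. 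Your plan would need a fundamentally different mechanism to get the discretized dual norm on the test side, and as written it does not supply one. (A smaller issue: $\|\gamma_0(u-E_X^\delta u^\delta)\|_H\le\|u-E_X^\delta u^\delta\|_{X,\delta}$ is also not justified, since $u-E_X^\delta u^\delta\notin Y^\delta$; the paper instead splits via the triangle inequality before applying $\gamma_0$, using $\|\gamma_0\|_{X\to H}\le1$ for the $u-E_X^\delta\bar u^\delta$ part and Corollary~\ref{corol:2} for the $\bar u^\delta-u^\delta$ part.)
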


\begin{proof} For $\bar{u} \in X^\delta$, we have
\begin{align*}
\|u-E_X^\delta u^\delta\|_{X,\delta}  & \leq \|u-E_X^\delta \bar{u}^\delta\|_X+\| \bar{u}^\delta-u^\delta\|_{X^\delta},
\intertext{as also}
\|u_0-\gamma_0 E_X^\delta u^\delta\|_H & \leq  \|\gamma_0 (u-E_X^\delta \bar{u}^\delta)\|_H+\|\gamma_0 E_X^\delta(\bar{u}^\delta-u^\delta)\|_H\\
&\leq  \|u-E_X^\delta \bar{u}^\delta\|_X+\| \bar{u}^\delta-u^\delta\|_{X^\delta}.
\end{align*}
by two applications of Corollary~\ref{corol:2}.

Recall that
\be \label{eq:37}
S^\delta z:=A_X^\delta z+(\gamma_T^\delta)'\gamma_T^\delta z-(E_X^\delta)'(\ell+\gamma_0' u_0)-(\diff_t^\delta)' (A_Y^\delta)^{-1}((E_Y^\delta)'\ell-\diff_t^\delta z),
\ee
where Lemma~\ref{lem:Schur} shows that $S^\delta\colon X^\delta \rightarrow (X^{\delta})'$ is strongly monotone with constant $m_{S^\delta}=m_{S}=\min\big(1,m_A,\tfrac{m_A}{L_A^2}\big)$.
We estimate
$$
m_{S} \| \bar{u}^\delta-u^\delta\|_{X^\delta}^2 \leq (S^\delta \bar{u}^\delta-S^\delta u^\delta)(\bar{u}^\delta-u^\delta)=(S^\delta \bar{u}^\delta)(\bar{u}^\delta-u^\delta).
$$
Substituting $(\ell,u_0)=B_e u$ in the definition of $S^\delta$, and using $\gamma_T' \gamma_T=\diff_t+\diff_t'+\gamma_0' \gamma_0$ as mappings $X \rightarrow X'$, and $X^\delta \subset Y^\delta$, the latter which allows to write $E_X^\delta=(A_Y^\delta)^{-1} (E_Y^\delta)' A E_X^\delta$ for deriving the third equality below, we infer that
\begin{align} \nonumber
(S^\delta \bar{u}^\delta)(\bar{u}^\delta&-u^\delta)=
\big(A E_X^\delta \bar{u}^\delta+\gamma_T '\gamma_T E_X^\delta \bar{u}^\delta-(\diff_t+A)u-\gamma_0' \gamma_0 u \big)\big(E_X^\delta(\bar{u}^\delta-u^\delta)\big)\\ \nonumber
&\hspace{1.3cm} -\big((E_Y^\delta)' \diff_t E_X^\delta(\bar{u}^\delta-u^\delta)\big)\big((A_Y^\delta)^{-1}(E_Y^\delta)'(Au+\diff_t(u- E_X^\delta \bar{u}^\delta))\big) \\ \nonumber
=&\,\big(A E_X^\delta \bar{u}^\delta-Au+\diff_t(E_X^\delta \bar{u}^\delta-u)+\gamma_0' \gamma_0 (E_X^\delta \bar{u}^\delta-u)\big)\big(E_X^\delta(\bar{u}^\delta-u^\delta)\big)\\ \nonumber
&+\big(\diff_t E_X^\delta (\bar{u}^\delta-u^\delta)\big) \big(E_X^\delta \bar{u}^\delta\big)
\\ \nonumber
&-\big((E_Y^\delta)' \diff_t E_X^\delta(\bar{u}^\delta-u^\delta)\big)\Big((A_Y^\delta)^{-1}(E_Y^\delta)'(Au+\diff_t(u- E_X^\delta \bar{u}^\delta))\big) \\
=&\,\big(A E_X^\delta \bar{u}^\delta-Au+\diff_t(E_X^\delta \bar{u}^\delta-u)+\gamma_0' \gamma_0 (E_X^\delta \bar{u}^\delta-u)\big)\big(E_X^\delta(\bar{u}^\delta-u^\delta)\big)\label{eq:18}
\end{align}
\vspace*{-4ex}
\be \label{eq:19}
\begin{split}
\hspace{1em}+\big((E_Y^\delta)' \diff_t E_X^\delta(\bar{u}^\delta-u^\delta)&\big)\Big((A_Y^\delta)^{-1} (E_Y^\delta)' A E_X^\delta \bar{u}^\delta\\
&-(A_Y^\delta)^{-1}(E_Y^\delta)'(Au+\diff_t(u-E_X^\delta \bar{u}^\delta))\Big).
\end{split}
\ee

Lipschitz continuity of $A \colon Y \rightarrow Y'$, and $\|\gamma_0 E_X^\delta\|_{X^\delta \rightarrow H}\leq 1$ and $\|\gamma_0\|_{X \rightarrow H}\leq 1$ by applications of  Corollary~\ref{corol:2}, show that the expression in \eqref{eq:18} is bounded by
\begin{align*}
L_A \|E_X^\delta \bar{u}^\delta-u\|_Y \|E_X^\delta(\bar{u}^\delta-u^\delta)\|_Y&+\|(E_Y^\delta)' \diff_t(E_X^\delta \bar{u}^\delta-u)\|_{(Y^\delta)'} \|\|E_X^\delta(\bar{u}^\delta-u^\delta)\|_Y\\
&+
\|E_X^\delta \bar{u}^\delta-u\|_{X}\|\bar{u}^\delta-u^\delta\|_{X^\delta}.
\end{align*}
By Lipschitz continuity of $(A_Y^\delta)^{-1}$ and $A$, the expression in \eqref{eq:19} is bounded by
\begin{align*}
\|(E_Y^\delta)' \diff_tE_X^\delta( \bar{u}^\delta-u^\delta)\|_{(Y^\delta)'} \tfrac{1}{m_A}\Big(L_A \|E_X^\delta \bar{u}^\delta-u\|_Y+\|(E_Y^\delta)' \diff_t(E_X^\delta \bar{u}^\delta-u)\|_{(Y^\delta)'}\big).
\end{align*}
Using these two upper bounds one infers that
\begin{align*}
    m_{S} \| \bar{u}^\delta&-u^\delta\|_{X^\delta}^2 \\
    & \leq \begin{pmatrix}
        L_A & 1 \\
        \frac{L_A}{m_A} & \frac{1}{m_A}
    \end{pmatrix} \begin{pmatrix}
        \|E_X^\delta \bar{u}^\delta-u\|_Y \\
        \|(E_Y^\delta)' \diff_t(E_X^\delta \bar{u}^\delta-u)\|_{(Y^\delta)'}
    \end{pmatrix} \cdot \begin{pmatrix}
        \|E_X^\delta(\bar{u}^\delta-u^\delta)\|_Y \\
        \|(E_Y^\delta)' \diff_tE_X^\delta( \bar{u}^\delta-u^\delta)\|_{(Y^\delta)'}
    \end{pmatrix}
    \\
    & \,\,\,\,\,\,+ \|E_X \bar{u}^\delta -u\|_X \|\bar{u}^\delta-u^\delta\|_{X^\delta}\\
    & \leq \big(1+ \sqrt{(1+L_A^2)(1 + \tfrac{1}{m_A^2})}\,\big) \| u-E_X^\delta \bar{u}^\delta\|_{X} \| \bar{u}^\delta-u^\delta\|_{X^\delta}  
\end{align*}
from which the proof follows easily.
\end{proof}

\begin{remark}In the case of a \emph{linear} $A\colon Y \rightarrow Y'$ with $A=A'$, a result as Theorem~\ref{thm:quasi-opt} can be found in \cite[Thm.~3.5]{249.99}, by using that for $X^\delta \subset Y^\delta$ and such $A$ it holds that $u^\delta=\argmin_{z \in X^\delta}\|(\ell,u_0)-B_e E_X^\delta z\|_{(Y^\delta)' \times H}^2$ (see \cite[Rem.~4.1]{249.992}). Apart from being more generally applicable, the current derivation is more transparant. 

For a plain Galerkin discretization of the heat equation with homogeneous initial condition, an estimate as in Theorem~\ref{thm:quasi-opt} (with `$Y^\delta$'$=X^\delta$ in the definition of $\|\bigcdot\|_{X,\delta}$) can already be found in \cite[Thm.~3.2]{249.2}.
\end{remark}

\begin{corollary} \label{corol:1} For $X^\delta \subset Y^\delta$, it holds that
$$
\|\theta^\delta-u^\delta\|_{Y^\delta} +\tfrac{\sqrt{1+L_A^2}}{m_A} \|u_0-\gamma_0  E_X^\delta u^\delta\|_H \leq 2 C_1 \tfrac{\sqrt{1+L_A^2}}{m_A} \inf_{\bar{u}^\delta \in X^\delta} \|u-E_X^\delta \bar{u}^\delta\|_X.
$$
\end{corollary}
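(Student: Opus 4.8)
The plan is to read off $\lambda^\delta$ from the first of the two block-equations of the Galerkin system and to compare it, \emph{in} $Y^\delta$, with $u^\delta$; here and below $u^\delta \in X^\delta$ is tacitly identified with its image in $Y^\delta$ under the inclusion $X^\delta \subset Y^\delta$, which is what makes $\|\lambda^\delta - u^\delta\|_{Y^\delta}$ in the statement meaningful and which gives $A_Y^\delta u^\delta = (E_Y^\delta)' A E_X^\delta u^\delta$ and $\diff_t^\delta u^\delta = (E_Y^\delta)' \diff_t E_X^\delta u^\delta$. That first block-equation reads $A_Y^\delta \lambda^\delta + \diff_t^\delta u^\delta = (E_Y^\delta)'\ell$, i.e.
$$
A_Y^\delta \lambda^\delta = (E_Y^\delta)'\big(\ell - \diff_t E_X^\delta u^\delta\big).
$$

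First I would substitute $\ell = \diff_t u + A u$, which holds because $B_e u = (\ell,u_0)$, and subtract the identity $A_Y^\delta u^\delta = (E_Y^\delta)' A E_X^\delta u^\delta$, to get
$$
A_Y^\delta \lambda^\delta - A_Y^\delta u^\delta = (E_Y^\delta)'\big(A u - A E_X^\delta u^\delta + \diff_t(u - E_X^\delta u^\delta)\big).
$$
Since $A_Y^\delta \colon Y^\delta \to (Y^\delta)'$ is strongly monotone with the same constant $m_A$ as $A$, one has $m_A \|\lambda^\delta - u^\delta\|_{Y^\delta} \le \|A_Y^\delta \lambda^\delta - A_Y^\delta u^\delta\|_{(Y^\delta)'}$, so it remains to bound the right-hand side of the last display. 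Testing against $v \in Y^\delta$ and using $\|(E_Y^\delta)'\|_{Y' \to (Y^\delta)'} \le 1$, Lipschitz continuity of $A \colon Y \to Y'$ together with $\|E_Y^\delta v\|_Y = \|v\|_{Y^\delta}$ for the $A$-part, and the definition of $\|\cdot\|_{(Y^\delta)'}$ for the $\diff_t$-part, one bounds the tested expression by $\|v\|_{Y^\delta}$ times the Euclidean inner product of $(L_A,1)$ with $\big(\|u - E_X^\delta u^\delta\|_Y,\ \|(E_Y^\delta)'\diff_t(u - E_X^\delta u^\delta)\|_{(Y^\delta)'}\big)$; Cauchy--Schwarz then yields
$$
\|A_Y^\delta \lambda^\delta - A_Y^\delta u^\delta\|_{(Y^\delta)'} \le \sqrt{1 + L_A^2}\;\|u - E_X^\delta u^\delta\|_{X,\delta},
$$
where at the end the three-term mesh-dependent norm $\|\cdot\|_{X,\delta}$ is used to dominate its two-term partial sum. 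This Cauchy--Schwarz step is the one place deserving care: it is precisely what produces the sharp factor $\sqrt{1 + L_A^2}$ rather than the cruder $L_A + 1$.

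Combining the two displayed estimates gives $\|\lambda^\delta - u^\delta\|_{Y^\delta} \le \tfrac{\sqrt{1+L_A^2}}{m_A}\,\|u - E_X^\delta u^\delta\|_{X,\delta}$. To finish, I would invoke Theorem~\ref{thm:quasi-opt}, which bounds both $\|u - E_X^\delta u^\delta\|_{X,\delta}$ and $\|u_0 - \gamma_0 E_X^\delta u^\delta\|_H$ by $C_1 \inf_{\bar u^\delta \in X^\delta} \|u - E_X^\delta \bar u^\delta\|_X$; multiplying the bound for $\|u_0 - \gamma_0 E_X^\delta u^\delta\|_H$ by $\tfrac{\sqrt{1+L_A^2}}{m_A}$ and adding it to the bound for $\|\lambda^\delta - u^\delta\|_{Y^\delta}$ produces the claim with constant $2 C_1 \tfrac{\sqrt{1+L_A^2}}{m_A}$. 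Beyond this, I expect no real obstacle --- only the bookkeeping of the identification of $u^\delta$ with an element of $Y^\delta$ and of carrying the mesh-dependent norm $\|\cdot\|_{X,\delta}$ (rather than $\|\cdot\|_X$) all the way to the final application of Theorem~\ref{thm:quasi-opt}.
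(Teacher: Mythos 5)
Your proof is correct and follows essentially the same route as the paper: use the first block-equation and strong monotonicity of $A_Y^\delta$ to reduce to $\|(E_Y^\delta)'(\ell - B E_X^\delta u^\delta)\|_{(Y^\delta)'}$, substitute $\ell = Bu$, apply Lipschitz continuity of $A$ and Cauchy--Schwarz to obtain the factor $\sqrt{1+L_A^2}\,\|u - E_X^\delta u^\delta\|_{X,\delta}$, and finish with Theorem~\ref{thm:quasi-opt} applied to both terms on the right.
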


\begin{proof} It holds that
\begin{align*}
m_A \|\theta^\delta-u^\delta\|_{Y^\delta} & \leq \|A_Y^\delta \theta^\delta- A_Y^\delta u^\delta\|_{(Y^\delta)'} =\|(E_Y^\delta)'(\ell-B E_X^\delta u^\delta)\|_{(Y^\delta)'}
\\&=\|(E_Y^\delta)'(\diff_t(u-E_X^\delta u^\delta)+A u -A E_X^\delta u^\delta)\|_{(Y^\delta)'}
\\& \leq
\|(E_Y^\delta)'(\diff_t(u-E_X^\delta u^\delta)\|_{(Y^\delta)'}+L_A \|u-E_X^\delta u^\delta\|_{Y} \\
& \leq \sqrt{1+L_A^2} \|u-E_X^\delta u^\delta\|_{X,\delta}.
\end{align*}
The proof is completed by an application of Theorem~\ref{thm:quasi-opt}.
\end{proof}

In the framework of minimal residual approximations w.r.t.~a discretized dual norm for the solution of a linear PDE, the principle behind the following theorem was introduced in \cite[Lemma~3.3]{45.44}, and in a modified form, further investigated in \cite{64.18}.

\begin{theorem}[Quasi-optimal Galerkin approximation] \label{thm:pjotr}
Let $X^\delta \subset Y^\delta$. For $\varrho>0$ being any constant, suppose that, 
with
$$
\theta_\delta:=A^{-1}(\ell - \diff_t E_X^\delta u^\delta),
$$
it holds that
\be \label{eq:pjotr}
\|\theta_\delta- E_Y^\delta \theta^\delta\|_Y \leq \varrho \big( \|\theta^\delta- u^\delta\|_{Y^\delta} + \tfrac{\sqrt{1+L_A^2}}{m_A} \|u_0-\gamma_0  E_X^\delta u^\delta\|_H\big).
\ee
Then
$$
\|u-E_X^\delta u^\delta\|_X \leq \big(\tfrac{L_S}{m_S} +L_{\NN^{-1}} L_A \varrho \tfrac{2 C_1 \sqrt{1+L_A^2}}{m_A}\big) \inf_{\bar{u}^\delta \in X^\delta}\|u- E_X^\delta \bar{u}^\delta\|_X.
$$
\end{theorem}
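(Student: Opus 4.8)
The plan is to bound $\|u - E_X^\delta u^\delta\|_X$ by splitting it into two pieces: the ``consistency'' error $\|u - E_X^\delta u^\delta\|_{X,\delta}$ measured in the mesh-dependent norm (already controlled by Theorem~\ref{thm:quasi-opt} in terms of the best approximation error), plus the discrepancy between $\|\bigcdot\|_{X,\delta}$ and $\|\bigcdot\|_X$ on the single vector $u - E_X^\delta u^\delta$. Since the two norms differ only in how they measure the temporal derivative — $\|\diff_t\bigcdot\|_{Y'}$ versus $\|(E_Y^\delta)'\diff_t\bigcdot\|_{(Y^\delta)'}$ — and since $\|\bigcdot\|_{X,\delta} \le \|\bigcdot\|_X$ always holds, I need to control the excess $\|\diff_t(u - E_X^\delta u^\delta)\|_{Y'}$ (or rather the part of it not seen by $Y^\delta$) using the a posteriori hypothesis \eqref{eq:pjotr}.

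The key observation linking \eqref{eq:pjotr} to the temporal derivative is the definition $\lambda_\delta := A^{-1}(\ell - \diff_t E_X^\delta u^\delta)$, which rearranges to $A\lambda_\delta = \ell - \diff_t E_X^\delta u^\delta$; together with $B_e u = (\ell,u_0)$, i.e.\ $Au + \diff_t u = \ell$, this gives $\diff_t(u - E_X^\delta u^\delta) = A\lambda_\delta - Au = A\lambda_\delta - A\lambda$ (using $\lambda = u$ from Theorem~\ref{thm:basic}). Hence by Lipschitz continuity of $A$, $\|\diff_t(u - E_X^\delta u^\delta)\|_{Y'} \le L_A\|\lambda_\delta - u\|_Y$. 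Now I would insert $E_Y^\delta\lambda^\delta$: $\|\lambda_\delta - u\|_Y \le \|\lambda_\delta - E_Y^\delta\lambda^\delta\|_Y + \|E_Y^\delta\lambda^\delta - u\|_Y$. The first term is exactly what \eqref{eq:pjotr} bounds by $\varrho(\|\lambda^\delta - u^\delta\|_{Y^\delta} + \tfrac{\sqrt{1+L_A^2}}{m_A}\|u_0 - \gamma_0 E_X^\delta u^\delta\|_H)$, and Corollary~\ref{corol:1} bounds \emph{that} bracket by $2C_1\tfrac{\sqrt{1+L_A^2}}{m_A}\inf_{\bar u^\delta}\|u - E_X^\delta\bar u^\delta\|_X$. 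For the second term, $\|E_Y^\delta\lambda^\delta - u\|_Y$, I would use the stability of $(\NN^\delta)^{-1}$ (Corollary~\ref{corol:stability}) applied to the Galerkin system versus the continuous one: since $\lambda = u$, this is part of $\|\vec u - \vec E^\delta\vec u^\delta\|$, and comparing the residuals — the Galerkin equation gives $\NN^\delta \vec u^\delta = (\vec E^\delta)'\NN\vec u$ — one gets $\|E_Y^\delta\lambda^\delta - u\|_Y \lesssim L_{\NN^{-1}}L_A$ times the relevant approximation error; more directly, I expect to bound $\|E_Y^\delta\lambda^\delta - u\|_Y$ by feeding the best-approximation estimate into the $L_{\NN^{-1}}$-stability, which also explains the $L_{\NN^{-1}}L_A$ factor and the additive $\tfrac{L_S}{m_S}$ (from the Schur-complement route for the $u^\delta$ part) appearing in the final constant.

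Assembling: $\|u - E_X^\delta u^\delta\|_X \le \|u - E_X^\delta u^\delta\|_{X,\delta} + \|\diff_t(u - E_X^\delta u^\delta)\|_{Y'}$ (up to the harmless reorganization that $\|\bigcdot\|_X^2 - \|\bigcdot\|_{X,\delta}^2$ only involves the temporal-derivative terms, so the difference of the $X$- and $X,\delta$-norms is at most $\|\diff_t(u-E_X^\delta u^\delta)\|_{Y'}$), then substitute the Theorem~\ref{thm:quasi-opt} bound for the first summand and the chain above (\eqref{eq:pjotr} $\to$ Corollary~\ref{corol:1}, plus the $L_{\NN^{-1}}$-stability bound on $\|E_Y^\delta\lambda^\delta - u\|_Y$) for the second, collecting all constants into $1 + \tfrac{L_S}{m_S} + L_{\NN^{-1}}L_A\varrho\tfrac{2C_1\sqrt{1+L_A^2}}{m_A}$.

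The main obstacle I anticipate is getting the bookkeeping of constants to land exactly on the stated expression — in particular justifying cleanly that $\|E_Y^\delta\lambda^\delta - u\|_Y$ (equivalently the $\lambda$-component error of the Galerkin solution) contributes precisely the $(1 + \tfrac{L_S}{m_S})$ part without needing the inf-sup constant $\gamma_\Delta$ anywhere. This is the crux: unlike Proposition~\ref{prop:9}, here there is no $\gamma_\Delta$, so the comparison $\|E_Y^\delta\lambda^\delta - u\|_Y$ must be routed not through $\|(\vec E^\delta)'\|$ but through the Schur complement $S^\delta$ for $u^\delta$ (giving $\tfrac{L_S}{m_S}$ via Lemma~\ref{lem:Schur}) and then recovering $\lambda^\delta$ from $u^\delta$ via $\lambda^\delta = (A_Y^\delta)^{-1}((E_Y^\delta)'\ell - \diff_t^\delta u^\delta)$ and comparing to $\lambda_\delta$ — which is where hypothesis \eqref{eq:pjotr} re-enters to close the loop. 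Making that circular-looking dependence rigorous (it is not truly circular because \eqref{eq:pjotr} is an \emph{assumed} a posteriori bound, not something to be derived) is the delicate point.
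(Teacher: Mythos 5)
Your overall approach differs from the paper's, and it has a genuine gap at the step you yourself flag as ``the crux.'' You propose to bound $\|E_Y^\delta\lambda^\delta - u\|_Y$ by routing through the Schur complement and then ``closing the loop'' with \eqref{eq:pjotr}, but you never carry this out, and the routes you sketch do risk circularity: $\lambda^\delta$ is the Galerkin approximation in $Y^\delta$ of $\lambda_\delta$ (not of $u$), so a direct stability argument for $\|E_Y^\delta\lambda^\delta - u\|_Y$ loops back to $\|\lambda_\delta - u\|_Y$, which is precisely the quantity you were trying to bound via $\|\diff_t(u-E_X^\delta u^\delta)\|_{Y'}\le L_A\|\lambda_\delta - u\|_Y$; the hypothesis \eqref{eq:pjotr} only controls $\|\lambda_\delta - E_Y^\delta\lambda^\delta\|_Y$, not $\|E_Y^\delta\lambda^\delta - u\|_Y$. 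The circularity \emph{can} be broken: insert $E_X^\delta u^\delta$ to get $\|E_Y^\delta\lambda^\delta - u\|_Y \le \|\lambda^\delta - u^\delta\|_{Y^\delta} + \|u - E_X^\delta u^\delta\|_Y$ and invoke Corollary~\ref{corol:1} and Theorem~\ref{thm:quasi-opt} (the $Y$-norm is part of $\|\cdot\|_{X,\delta}$). But if you assemble the estimate this way, your constant comes out of the form $C_1(1+L_A) + 2C_1 L_A\sqrt{1+L_A^2}(1+\varrho)/m_A$, which does \emph{not} collapse to the advertised $1+L_S/m_S$ as $\varrho\to 0$ — the $C_1$ factor from Theorem~\ref{thm:quasi-opt} on the leading $\|\cdot\|_{X,\delta}$ term already spoils the sharp constant, and so do the $\varrho$-independent pieces coming from $\|E_Y^\delta\lambda^\delta - u\|_Y$. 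So the norm-switching decomposition $\|u - E_X^\delta u^\delta\|_X \le \|u - E_X^\delta u^\delta\|_{X,\delta} + \|\diff_t(u - E_X^\delta u^\delta)\|_{Y'}$, while correct as an inequality, proves a strictly weaker theorem.

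The paper's proof is built around a device your plan does not anticipate: an \emph{intermediate} operator $\NN_\delta\colon Y\times X^\delta\to Y'\times(X^\delta)'$ in which the Lagrange-multiplier space is kept \emph{continuous} ($Y$) while the trial space is discrete ($X^\delta$). Its solution $(\underline\lambda_\delta,\underline u^\delta)$ satisfies $E_X^\delta S (E_X^\delta)'\underline u^\delta = 0$ with the \emph{continuous} Schur complement $S\colon X\to X'$, which is Lipschitz and strongly monotone with constants $L_S, m_S$; the standard C\'ea estimate for such operators then gives $\|u - E_X^\delta\underline u^\delta\|_X \le (1+L_S/m_S)\inf_{\bar u^\delta}\|u - E_X^\delta\bar u^\delta\|_X$ with \emph{no} inf-sup constant and no $C_1$. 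The residual of $(E_Y^\delta\lambda^\delta, u^\delta)$ in the $\NN_\delta$-system is computed to be exactly $(AE_Y^\delta\lambda^\delta - A\lambda_\delta, 0)$, so Corollary~\ref{corol:stability} applied to $\NN_\delta$ yields $\|E_X^\delta(u^\delta-\underline u^\delta)\|_X \le L_{\NN^{-1}} L_A \|E_Y^\delta\lambda^\delta - \lambda_\delta\|_Y$, and this is where \eqref{eq:pjotr} and Corollary~\ref{corol:1} enter, producing the additive $\varrho$-term. The intermediate operator is what isolates the discretization-of-$Y$ perturbation as exactly the a posteriori quantity $\|E_Y^\delta\lambda^\delta - \lambda_\delta\|_Y$, with everything else being the $\gamma_\Delta$-free C\'ea estimate; this clean split is what your decomposition cannot reproduce.
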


\begin{proof} We consider the operator $\NN_\delta := \left[\begin{array}{@{}cc@{}} A & \diff_t E_X^\delta\\ (E_X^\delta)' \diff_t'& -(A_X^\delta+(\gamma^\delta_T)' \gamma^\delta_T)\end{array}\right]\colon Y \times X^\delta \rightarrow Y' \times (X^\delta)'$. 
Let $(\underline{\theta}_\delta,\underline{u}^\delta) \in Y \times X^\delta$ solve
\be \label{eq:38}
\NN_\delta \left[\begin{array}{@{}c@{}} \underline{\theta}_\delta \\ \underline{u}^\delta \end{array}\right]=\left[\begin{array}{@{}c@{}} \ell \\  (E^\delta_X)' (-\ell-\gamma_0' u_0) \end{array}\right].
\ee
By eliminating $\underline{\theta}_\delta$, one infers that $E_X^\delta S (E_X^\delta)' \underline{u}^\delta=0$ with $S$ from \eqref{eq:37},\footnote{Reading $\delta=\delta_*$. Notice that $E_X^\delta S (E_X^\delta)' \neq S^\delta$ (unless $\delta=\delta_*$).} i.e., $\underline{u}^\delta$ is the Galerkin approximation of the solution $u$ of $Su=0$.

Since $S$ is Lipschitz continuous and strongly monotone, it is well-known  that $E_X^\delta \underline{u}^\delta$ is a quasi-best approximation for $u$ from $X^\delta$. Indeed, for arbitrary $\bar{u}^\delta \in X^\delta$, we have
\begin{align*}
 m_S \|u-E_X^\delta \underline{u}^\delta\|_X^2& \leq (S u-S E_X^\delta \underline{u}^\delta)(u-E_X^\delta \underline{u}^\delta)\\
& =(S u-S E_X^\delta \underline{u}^\delta)(u-E_X^\delta \bar{u}^\delta)
\leq L_S \|u-E_X^\delta \underline{u}^\delta\|_X\|u-E_X^\delta \bar{u}^\delta\|_X,
\end{align*}
or
\be \label{eq:16}
\|u-E_X^\delta \underline{u}^\delta\|_X \leq \tfrac{L_S}{m_S}\inf_{\bar{u}^\delta \in X^\delta}\|u-E_X^\delta \bar{u}^\delta\|_X.
\ee

From the second equation in $\left[\begin{array}{@{}cc@{}} A_Y^\delta & \diff_t^\delta\\ (\diff_t^\delta)'& -(A_X^\delta+(\gamma^\delta_T)' \gamma^\delta_T)\end{array}\right] \left[\begin{array}{@{}c@{}} \theta^\delta \\ u^\delta \end{array}\right]=\left[\begin{array}{@{}c@{}} (E^\delta_Y)' \ell \\  (E^\delta_X)' (-\ell-\gamma_0' u_0) \end{array}\right]$, and the definition of $\theta_\delta$, we have
\be \label{eq:24}
\begin{split}
&\NN_\delta \left[\begin{array}{@{}c@{}} E_Y^\delta \theta^\delta \\ u^\delta \end{array}\right] -\NN_\delta \left[\begin{array}{@{}c@{}} \underline{\theta}_\delta \\ \underline{u}^\delta \end{array}\right]\\
&=\left[\begin{array}{@{}c@{}} A E_Y^\delta \theta^\delta +\diff_t E_X^\delta u^\delta \\ (E_X^\delta)' \diff_t' E_Y^\delta \theta^\delta-(A_X^\delta+(\gamma^\delta_T)' \gamma^\delta_T) u^\delta\end{array}\right]
-
\left[\begin{array}{@{}c@{}} \ell \\ (E_X^\delta)'(-\ell-\gamma_0' u_0) \end{array}\right]\\
&=
\left[\begin{array}{@{}c@{}} A E_Y^\delta \theta^\delta -A \theta_\delta\\ 0
\end{array}\right].
\end{split}
\ee
Because Corollary~\ref{corol:stability} also applies to $\NN_\delta$, one infers that
\be \label{eq:17}
\begin{split}
\|E_X^\delta(u^\delta-\underline{u}^\delta)\|_{X} & \leq \|E_Y^\delta \theta^\delta-\underline{\theta}_\delta\|_Y+\|E_X^\delta(u^\delta-\underline{u}^\delta)\|_{X} 
\\ &\leq 
L_{\NN^{-1}} \left\| \NN_\delta \left[\begin{array}{@{}c@{}} E_Y^\delta \theta^\delta \\ u^\delta \end{array}\right] -\NN_\delta \left[\begin{array}{@{}c@{}} \underline{\theta}_\delta \\ \underline{u}^\delta \end{array}\right] \right\|_{Y' \times (X^\delta)'} \\
& \leq L_{\NN^{-1}} L_A \|E_Y^\delta \theta^\delta -\theta_\delta\|_Y \\
&\leq L_{\NN^{-1}} L_A \varrho \big( \|\theta^\delta- u^\delta\|_{Y^\delta} + \tfrac{\sqrt{1+L_A^2}}{m_A} \|u_0-\gamma_0  E_X^\delta u^\delta\|_H\big)\\
& \leq L_{\NN^{-1}} L_A \varrho \tfrac{2 C_1 \sqrt{1+L_A^2}}{m_A} \inf_{\bar{u}^\delta \in X^\delta} \|u-E_X^\delta \bar{u}^\delta \|_X
\end{split}
\ee
by Corollary~\ref{corol:1}.
The proof is completed by the combination of \eqref{eq:16} and \eqref{eq:17} and a triangle inequality.
\end{proof}

Under the harmless assumption that $A(0)=0$,
next we show that, for $u \not\in X^\delta$, \eqref{eq:pjotr} will always be true by taking $Y^\delta \supset X^\delta$ sufficiently large.  Although $\theta^\delta$ is the Galerkin approximation from $Y^\delta$ to the solution of $A \theta_\delta=\ell-\diff_t E_X^\delta u^\delta$, this statement is not immediate because also $u^\delta$ at the right-hand side in \eqref{eq:pjotr} depends on $Y^\delta$.

\begin{proposition} \label{prop:2} Let $A(0)=0$, and $(\delta_i)_{i \in \N}$ such that $Y^{\delta_i} \subset Y^{\delta_{i+1}}$ and $\overline{\cup_{i \in \N} Y^{\delta_i}}=Y$. Then for $(\ell, u_0) \in Y' \times H$ and a (fixed) finite dimensional $X^\delta$, for $u \not\in X^\delta$ condition \eqref{eq:pjotr} is valid for $Y^\delta  \supset Y^{\delta_i} \cup X^\delta$ and $i$ being sufficiently large.
\end{proposition}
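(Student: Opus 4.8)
The plan is to argue by contradiction, exploiting the observation (already recorded just before the statement) that $E_Y^\delta\lambda^\delta$ is precisely the Galerkin approximation from $Y^\delta$ to $\lambda_\delta=A^{-1}(\ell-\diff_t E_X^\delta u^\delta)$: the first block of the Galerkin system reads $(A E_Y^\delta\lambda^\delta)(v)=(\ell-\diff_t E_X^\delta u^\delta)(v)$ for all $v\in Y^\delta$, while $A\lambda_\delta=\ell-\diff_t E_X^\delta u^\delta$ in $Y'$. Hence the left-hand side of \eqref{eq:pjotr} is a genuine Galerkin error for the Lipschitz continuous and strongly monotone operator $A$, and the same two-line computation as for \eqref{eq:16} (with $A$ in the role of $S$) gives the C\'ea-type bound $\|\lambda_\delta-E_Y^\delta\lambda^\delta\|_Y\le\big(1+\tfrac{L_A}{m_A}\big)\dist_Y(\lambda_\delta,Y^\delta)$. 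I would first show that this quantity tends to $0$ along the family in the statement, and then that the right-hand side of \eqref{eq:pjotr} cannot also tend to $0$ without forcing $u\in X^\delta$.

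For the first part I need the $\lambda_\delta$'s to form a precompact subset of $Y$, so that density of $\bigcup_i Y^{\delta_i}$ yields $\sup\dist_Y(\lambda_\delta,Y^{\delta_i})\to0$ and, since $Y^\delta\supseteq Y^{\delta_i}$, $\|\lambda_\delta-E_Y^\delta\lambda^\delta\|_Y\to0$. The crucial input is uniform boundedness of $u^\delta$: because $A(0)=0$ we have $\NN^\delta(0,0)=0$, so Corollary~\ref{corol:stability} gives $\|(\lambda^\delta,u^\delta)\|_{Y^\delta\times X^\delta}\le L_{\NN^{-1}}\|((E_Y^\delta)'\ell,-(E_X^\delta)'(\ell+\gamma_0'u_0))\|$, whose right-hand side is $\le L_{\NN^{-1}}(2\|\ell\|_{Y'}+\|u_0\|_H)$ uniformly in $\delta$ (using $\|\cdot\|_{X,\delta}\ge\|\cdot\|_Y$ together with Corollary~\ref{corol:2} to estimate $\|(E_X^\delta)'(\ell+\gamma_0'u_0)\|_{(X^\delta)'}$). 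Since $X^\delta$ is a fixed finite-dimensional space, on which $\|E_X^\delta\,\cdot\,\|_X$, $\|E_X^\delta\,\cdot\,\|_Y$ and $\|\cdot\|_{X^\delta}$ are all equivalent, $\{E_X^\delta u^\delta\}$ stays in a fixed bounded, hence precompact, subset of $X$; applying the continuous map $v\mapsto A^{-1}(\ell-\diff_t v)$ (continuity of $\diff_t\colon X\to Y'$, Lipschitz continuity of $A^{-1}$) then puts all $\lambda_\delta$ in one precompact set $K\subset Y$, as required.

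For the second part, suppose \eqref{eq:pjotr} fails along indices $i_k\to\infty$. By the first part the left-hand sides tend to $0$, hence so do the right-hand sides: $\|\lambda^\delta-u^\delta\|_{Y^\delta}\to0$ and $\|u_0-\gamma_0 E_X^\delta u^\delta\|_H\to0$. By precompactness of $\{E_X^\delta u^\delta\}$ I pass to a further subsequence with $E_X^\delta u^\delta\to u^*\in X^\delta$ in $X$; then $\gamma_0 u^*=u_0$, and since $\|E_Y^\delta\lambda^\delta-E_X^\delta u^\delta\|_Y=\|\lambda^\delta-u^\delta\|_{Y^\delta}\to0$ while $\|\lambda_\delta-E_Y^\delta\lambda^\delta\|_Y\to0$, also $\lambda_\delta\to u^*$. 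On the other hand $\lambda_\delta=A^{-1}(\ell-\diff_t E_X^\delta u^\delta)\to A^{-1}(\ell-\diff_t u^*)$ by continuity, so $Au^*=\ell-\diff_t u^*$, i.e.\ $B_e u^*=(\ell,u_0)$. Uniqueness of the solution of the IVP forces $u^*=u$, contradicting $u\notin X^\delta$; hence \eqref{eq:pjotr} holds for all $i$ large enough.

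The main obstacle is exactly the circular dependence flagged in the text: $\lambda_\delta$ depends on $u^\delta$, which depends on $Y^\delta$. It is defused by the remark that every $\lambda_\delta$ is the image of a point in a \emph{fixed} bounded subset of the \emph{fixed finite-dimensional} space $X^\delta$ under one fixed continuous map, so $\{\lambda_\delta\}$ is precompact and its best-approximation error from $Y^{\delta_i}$ is controlled uniformly in $\delta$. Once this is established, the left-hand side of \eqref{eq:pjotr} is genuinely driven to $0$, and the remaining compactness-plus-uniqueness argument is routine.
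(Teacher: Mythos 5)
Your proof is correct, and the first half (uniform boundedness of $u^\delta$ via $\NN^\delta 0=0$ and Corollary~\ref{corol:stability}, precompactness of $\{E_X^\delta u^\delta\}$ because $X^\delta$ is a fixed finite-dimensional space, and hence precompactness of $\{\lambda_\delta\}$ in $Y$, so that the left-hand side of \eqref{eq:pjotr} tends to $0$) follows the paper's proof almost verbatim. The second half, where you must show the right-hand side of \eqref{eq:pjotr} does not tend to $0$, is argued by a genuinely different route. The paper introduces the auxiliary solution $\underline{u}^\delta\in X^\delta$ of the mixed system \eqref{eq:38} (a fixed element, independent of $Y^\delta$), observes from the perturbation identity \eqref{eq:24} that $\|E_Y^\delta\lambda^\delta-\lambda_\delta\|_Y\to0$ forces $\|u^\delta-\underline{u}^\delta\|_{X^\delta}\to0$, and then invokes the Lipschitz stability of $B_e^{-1}$ (Corollary~\ref{corol:Lipstab}) to bound $L_A\|\lambda_\delta-E_X^\delta\underline{u}^\delta\|_Y+\|u_0-\gamma_0 E_X^\delta\underline{u}^\delta\|_H$ below by the fixed positive constant $L_{B_e^{-1}}^{-1}\|u-E_X^\delta\underline{u}^\delta\|_X$; a triangle inequality then keeps the right-hand side of \eqref{eq:pjotr} bounded away from $0$. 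You instead argue by contradiction: if the right-hand side also tended to $0$ along some subsequence, precompactness of $\{E_X^\delta u^\delta\}$ in $X$ plus the relations $\lambda_\delta\to u^*$ and $\lambda_\delta=A^{-1}(\ell-\diff_t E_X^\delta u^\delta)\to A^{-1}(\ell-\diff_t u^*)$ show that the limit $u^*\in X^\delta$ solves $B_e u^*=(\ell,u_0)$, contradicting $u\notin X^\delta$ by uniqueness. Your approach avoids the auxiliary solution $\underline{u}^\delta$ and the explicit Lipschitz constant $L_{B_e^{-1}}$ altogether, requiring only uniqueness of the IVP; the paper's direct approach is constructive and yields an explicit lower bound on the right-hand side of \eqref{eq:pjotr}, which is useful when one wishes to quantify how large $i$ must be. Both are sound.
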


\begin{proof}
Since $A$ is Lipschitz continuous and strongly monotone, it is well-known that $\theta^\delta$ is a quasi-best approximation for $\theta_\delta$ from $Y^\delta$, cf.~\eqref{eq:16}.
As a consequence of $N^\delta 0=0$ and $\|(\gamma_0 E_X^\delta)'\|_{H \rightarrow (X^\delta)'} \leq 1$ by Corollary~\ref{corol:2}, Corollary~\ref{corol:stability} shows that $\|u^\delta\|_{X^\delta} \leq L_{N^{-1}}(2 \|\ell\|_{Y'}+\|u_0\|_H)$, i.e., $u^\delta$ is bounded uniformly in $Y^\delta$.
So since $X^\delta$ is a finite dimensional space, the set of these $u^\delta$ is precompact in $X$, and so because $\diff_t\colon X \rightarrow Y'$ and $A^{-1}\colon Y' \rightarrow Y$ are continuous, the corresponding set of solutions $\theta_\delta$ is precompact in $Y$. We conclude that for $Y^\delta  \supset Y^{\delta_i}$ and $i \rightarrow \infty$, it holds that $\|\theta_\delta-E_Y^\delta \theta^\delta\|_Y \rightarrow 0$.

Considering the right-hand side of \eqref{eq:pjotr}, with $\underline{u}^\delta \in X^\delta$ defined in \eqref{eq:38},
from \eqref{eq:24} it follows that $\|E_Y^\delta \theta^\delta - \theta_\delta\|_Y \rightarrow 0$ implies that $\|u^\delta  - \underline{u}^\delta\|_{X^\delta} \rightarrow 0$, so that $\|E_X^\delta(u^\delta  - \underline{u}^\delta)\|_{Y} \rightarrow 0$.
 Corollary~\ref{corol:Lipstab} shows that
\begin{align*}
L_A \|\theta_\delta - E_X^\delta \underline{u}^\delta\|_{Y}+\|u_0 -\gamma^\delta_0 E_X^\delta \underline{u}^\delta\|_H &\geq \|A \theta_\delta - A E_X^\delta \underline{u}^\delta\|_{Y'}+\|u_0 -\gamma^\delta_0 E_X^\delta \underline{u}^\delta\|_H\\
& =  \|\ell - (\diff_t+A) E_X^\delta \underline{u}^\delta\|_{Y'}+\|u_0 -\gamma^\delta_0 E_X^\delta \underline{u}^\delta\|_H\\
&\geq L_{B_e^{-1}}^{-1} \|u-E_X^\delta \underline{u}^\delta\|_X,
\end{align*}
which shows that, in contrast to the left-hand side, for $u \not\in X^\delta$ the right-hand side of \eqref{eq:pjotr} does \emph{not} tend to zero when $Y^\delta  \supset Y^{\delta_i}$ and $i \rightarrow \infty$, and thus completes the proof.
\end{proof}

\begin{remark} In the unlikely event that $u \in X^\delta$, \eqref{eq:pjotr} might never be satisfied. By taking $Y^\delta  \supset Y^{\delta_i}$ and letting $i \rightarrow \infty$, however, $u^\delta$ converges to $ \underline{u}^\delta$, which in this case equals $u$ by \eqref{eq:16}.
\end{remark}

The combination of Theorem~\ref{thm:pjotr} and Proposition~\ref{prop:2} suggests an iterative enlargement of $Y^\delta$ until \eqref{eq:pjotr} is satisfied.
The right-hand side of \eqref{eq:pjotr} is available, but since $\theta_\delta$ is not, in order to do so 
one needs a computable \emph{a posteriori error estimator} for the Galerkin error $\|\theta_\delta-\theta^\delta\|_Y$.
Preferably it is built from local error indicators, which allows an efficient \emph{adaptive refinement}.
Although the operator $A$ involved has the nice properties of being Lipschitz continuous and strongly monotone, because of the anisotropic nature of the space $Y=L_2(\Xi;V)$ the development of such an estimator is non-trivial. We postpone its construction for the quasi-linear parabolic PDEs from Example~\ref{ex:quasi} to a forthcoming work.
\medskip

A complementary advantage of guaranteeing quasi-optimality by enforcing \eqref{eq:pjotr} is 
that it permits to compute an a \emph{posteriori error estimator} for $\|u-E_X^\delta u^\delta\|_X$, which can be employed for an \emph{adaptive refinement} of $X^\delta$ in an \emph{outer iteration}.

\begin{proposition}[Efficiency and reliability] \label{prop:5} Let $X^\delta \subset Y^\delta$, and let condition~\eqref{eq:pjotr} be satisfied. Then, for $u \not\in X^\delta$, it holds that
\begin{align*}
\tfrac{m_A}{\sqrt{1+L_A^2 +m_A^2}} \leq &\frac{\|u-E_X^\delta u^\delta\|_X}{\sqrt{\|\theta^\delta-u^\delta\|_{Y^\delta}^2+\|u_0-\gamma^\delta_0 u^\delta\|_H^2}}\\
&\hspace*{5em} \leq L_{B_e^{-1}}
\sqrt{L_A^2(\varrho+1)^2+(L_A \varrho \tfrac{\sqrt{1+L_A^2}}{m_A}+1)^2}.
\end{align*}
\end{proposition}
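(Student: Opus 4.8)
The plan is to prove the two displayed inequalities separately. Throughout I abbreviate $a:=\|\lambda^\delta-u^\delta\|_{Y^\delta}$ and $b:=\|u_0-\gamma^\delta_0 u^\delta\|_H=\|u_0-\gamma_0 E_X^\delta u^\delta\|_H$, so that the denominator of the displayed ratio is $\sqrt{a^2+b^2}$; I also note that the hypothesis $u\notin X^\delta$ is used only to guarantee $\sqrt{a^2+b^2}>0$, which will come out a posteriori from the reliability bound.

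For the \emph{efficiency} (left) inequality I would reuse the opening chain of the proof of Corollary~\ref{corol:1}: from $A_Y^\delta\lambda^\delta=(E_Y^\delta)'(\ell-\diff_t E_X^\delta u^\delta)$ together with the identity $\ell=\diff_t u+Au$ (valid since $B_e u=(\ell,u_0)$), the strong monotonicity of $A_Y^\delta$ and the Lipschitz continuity of $A$ give $m_A a\le\|(E_Y^\delta)'\diff_t(u-E_X^\delta u^\delta)\|_{(Y^\delta)'}+L_A\|u-E_X^\delta u^\delta\|_Y\le\sqrt{1+L_A^2}\,\|u-E_X^\delta u^\delta\|_{X,\delta}$, while $\|\bigcdot\|_{X,\delta}\le\|\bigcdot\|_{X}$ because the space defining the $(Y^\delta)'$-term in $\|\bigcdot\|_{X,\delta}$ is a subspace of $Y$. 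Corollary~\ref{corol:2} applied to the continuous index (i.e.\ with $X^\delta=X$, $Y^\delta=Y$, so $\|\gamma_0\|_{X\to H}\le1$) gives $b=\|\gamma_0(u-E_X^\delta u^\delta)\|_H\le\|u-E_X^\delta u^\delta\|_X$. Adding the squares yields $a^2+b^2\le\tfrac{1+L_A^2+m_A^2}{m_A^2}\|u-E_X^\delta u^\delta\|_X^2$, which is the asserted lower bound on the ratio.

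For the \emph{reliability} (right) inequality I would start from Corollary~\ref{corol:Lipstab}: since $B_e u=(\ell,u_0)$ and $B_e(E_X^\delta u^\delta)=(BE_X^\delta u^\delta,\gamma_0 E_X^\delta u^\delta)$, one has $\|u-E_X^\delta u^\delta\|_X\le L_{B_e^{-1}}\big(\|\ell-BE_X^\delta u^\delta\|_{Y'}+b\big)$. The key algebraic observation is that, by the definition $\lambda_\delta=A^{-1}(\ell-\diff_t E_X^\delta u^\delta)$, we get $\ell-BE_X^\delta u^\delta=\ell-\diff_t E_X^\delta u^\delta-AE_X^\delta u^\delta=A\lambda_\delta-AE_X^\delta u^\delta$, hence $\|\ell-BE_X^\delta u^\delta\|_{Y'}\le L_A\|\lambda_\delta-E_X^\delta u^\delta\|_Y$. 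Because $X^\delta\subset Y^\delta$ the embedding of $u^\delta$ into $Y$ factors through $Y^\delta$, so $\|E_Y^\delta\lambda^\delta-E_X^\delta u^\delta\|_Y=\|\lambda^\delta-u^\delta\|_{Y^\delta}=a$, and a triangle inequality through $E_Y^\delta\lambda^\delta$ gives $\|\lambda_\delta-E_X^\delta u^\delta\|_Y\le\|\lambda_\delta-E_Y^\delta\lambda^\delta\|_Y+a$. Since $E_Y^\delta\lambda^\delta$ is precisely the $Y^\delta$-Galerkin approximation of $\lambda_\delta$, hypothesis~\eqref{eq:pjotr} applies and bounds $\|\lambda_\delta-E_Y^\delta\lambda^\delta\|_Y\le\varrho\big(a+\tfrac{\sqrt{1+L_A^2}}{m_A}b\big)$. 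Collecting terms produces $\|u-E_X^\delta u^\delta\|_X\le L_{B_e^{-1}}(c_1 a+c_2 b)$ with $c_1,c_2$ explicit in $L_A,m_A,\varrho$ (in particular $c_2=L_A\varrho\tfrac{\sqrt{1+L_A^2}}{m_A}+1$), and the Cauchy--Schwarz step $c_1 a+c_2 b\le\sqrt{c_1^2+c_2^2}\,\sqrt{a^2+b^2}$ gives the claimed upper bound. As this bound forces $u=E_X^\delta u^\delta$ when $\sqrt{a^2+b^2}=0$, the ratio is well defined for $u\notin X^\delta$.

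The place where condition~\eqref{eq:pjotr} does the essential work — and what I expect to be the conceptual crux — is that $\lambda_\delta$ solves the auxiliary Lipschitz and strongly monotone problem $A\lambda_\delta=\ell-\diff_t E_X^\delta u^\delta$ whose right-hand side itself depends on $u^\delta$, and hence on $Y^\delta$; thus $\|\lambda_\delta-E_Y^\delta\lambda^\delta\|_Y$ is the Galerkin error of a problem that is not fixed as $Y^\delta$ varies, and \eqref{eq:pjotr} is exactly the a posteriori assumption that lets one trade this uncomputable quantity for the computable residual indicators $a$ and $b$. The remaining technical obstacle is the bookkeeping of constants through the triangle inequality — in particular tracking the coefficient of $a$, which picks up a contribution from \eqref{eq:pjotr} as well as from the term $\|E_Y^\delta\lambda^\delta-E_X^\delta u^\delta\|_Y$ — so that the final constant comes out in the asserted closed form; everything else is routine manipulation of the Lipschitz and monotonicity estimates already in hand.
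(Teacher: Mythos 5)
Your argument mirrors the paper's on both sides. For the lower bound you reuse exactly the paper's chain: strong monotonicity of $A_Y^\delta$ plus Lipschitz continuity of $A$ give $m_A\,a\le\sqrt{1+L_A^2}\,\|u-E_X^\delta u^\delta\|_X$ (with $a:=\|\lambda^\delta-u^\delta\|_{Y^\delta}$), and Corollary~\ref{corol:2} gives $b:=\|u_0-\gamma_0^\delta u^\delta\|_H\le\|u-E_X^\delta u^\delta\|_X$; adding squares yields the asserted constant $m_A/\sqrt{1+L_A^2+m_A^2}$. For the upper bound you use the same sequence the paper does: Corollary~\ref{corol:Lipstab}, the rewriting $\ell-BE_X^\delta u^\delta=A\lambda_\delta-AE_X^\delta u^\delta$, Lipschitz continuity of $A$, the triangle inequality through $E_Y^\delta\lambda^\delta$, hypothesis~\eqref{eq:pjotr}, and finally Cauchy--Schwarz.

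The one place you should look again is the ``bookkeeping'' you flag at the end, because carried out it does \emph{not} return the constant asserted in the proposition. Your own decomposition gives
\[
\|\lambda_\delta-E_X^\delta u^\delta\|_Y\le\|\lambda_\delta-E_Y^\delta\lambda^\delta\|_Y+\|E_Y^\delta\lambda^\delta-E_X^\delta u^\delta\|_Y\le\big(\varrho a+\varrho\tfrac{\sqrt{1+L_A^2}}{m_A}b\big)+a,
\]
so $c_1=L_A(1+\varrho)$, not $L_A\varrho$, and the resulting ratio bound is $L_{B_e^{-1}}\sqrt{L_A^2(1+\varrho)^2+\big(L_A\varrho\tfrac{\sqrt{1+L_A^2}}{m_A}+1\big)^2}$, slightly larger than what Proposition~\ref{prop:5} states. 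Intriguingly, the paper's own proof has the same issue: it passes directly from $L_A\|\lambda_\delta-E_X^\delta u^\delta\|_Y+b$ to $L_A\varrho\,a+(L_A\varrho\tfrac{\sqrt{1+L_A^2}}{m_A}+1)b$, which would require $\|\lambda_\delta-E_X^\delta u^\delta\|_Y\le\varrho a+\varrho\tfrac{\sqrt{1+L_A^2}}{m_A}b$; but \eqref{eq:pjotr} only controls $\|\lambda_\delta-E_Y^\delta\lambda^\delta\|_Y$, and for nonlinear $A$ the Galerkin approximation $E_Y^\delta\lambda^\delta$ is not the $Y$-nearest element of $Y^\delta$ to $\lambda_\delta$, so one cannot simply drop the additional $\|E_Y^\delta\lambda^\delta-E_X^\delta u^\delta\|_Y=a$. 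So: your method and the paper's coincide, but the faithful execution you started gives $L_A(1+\varrho)$ in place of $L_A\varrho$; be explicit about $c_1$ rather than asserting it ``comes out in the asserted closed form,'' and note the discrepancy with the statement.
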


\begin{proof} Thanks to $X^\delta \subset Y^\delta$,
\begin{align*}
\|\theta^\delta- u^\delta\|_{Y^\delta}&=\|(A_Y^\delta)^{-1} ((E_Y^\delta)'\ell -\diff_t^\delta u^\delta)-(A_Y^\delta)^{-1} A_Y^\delta u^\delta\|_{Y^\delta}\\
&\leq \tfrac{1}{m_A} \|(E_Y^\delta)'\big(\ell -(\diff_t + A) E_X^\delta u^\delta\big)\|_{(Y^\delta)'}\\
&\leq \tfrac{1}{m_A} \|(\diff_t + A)u -(\diff_t + A) E_X^\delta u^\delta\|_{Y'}\\
& \leq \tfrac{1}{m_A}\big(\|\diff_t(u- E_X^\delta u^\delta)\|_{Y'}+ L_A\|u-E_X^\delta u^\delta\|_Y\big) \leq \sqrt{\tfrac{1+L_A^2}{m_A^2}}\, \|u-E_X^\delta  u^\delta\|_X,
\end{align*}
which, in combination with $\|\gamma_0\|_{X \rightarrow H} \leq 1$ (Corollary~\ref{corol:2}), shows the lower bound.

Using Corollary~\ref{corol:Lipstab}, the upper bound follows from
\begin{align*}
\|u-E_X^\delta u^\delta\|_X&=\|B_e^{-1} B_e u-B_e^{-1} B_e E_X^\delta u^\delta\|_X \\
&\leq L_{B_e^{-1}} \big(\|\ell - (\diff_t+A) E_X^\delta u^\delta\|_{Y'}+\|u_0 -\gamma^\delta_0 u^\delta\|_H\big)
\\ & = L_{B_e^{-1}} \big(\|A \theta_\delta - A E_X^\delta u^\delta\|_{Y'}+\|u_0 -\gamma^\delta_0 u^\delta\|_H\big)
\\ & \leq L_{B_e^{-1}} \big(L_A \|\theta_\delta - E_X^\delta u^\delta\|_{Y}+\|u_0 -\gamma^\delta_0 u^\delta\|_H\big)
\\ & \leq L_{B_e^{-1}} \big(L_A (\|\theta_\delta - \theta^\delta\|_{Y}+\|\theta^\delta - u^\delta\|_{Y^\delta})+\|u_0 -\gamma^\delta_0 u^\delta\|_H\big)
\\ & \leq L_{B_e^{-1}} \big(L_A (\varrho+1) \|\theta^\delta-u^\delta\|_{Y^\delta}+(L_A \varrho \tfrac{\sqrt{1+L_A^2}}{m_A}+1) \|u_0 -\gamma^\delta_0 u^\delta\|_H\big).\qedhere
\end{align*}
\end{proof}

\section{Inexact Uzawa} \label{sec:7}
We have seen that $u:=B_e^{-1}(\ell,u_0)$ solves $\NN
\left[\begin{array}{@{}c@{}} \theta \\ u \end{array}\right]=
\left[\begin{array}{@{}c@{}} \ell \\ -(\ell+\gamma_0' u_0) \end{array}\right]$, where $\theta=u$, and we have studied the accuracy of Galerkin discretizations of this system. In the current section we present and analyze an iterative method for solving such nonlinear Galerkin systems, i.e.,
$$
\NN^\delta \left[\begin{array}{@{}c@{}} \theta^\delta \\ u^\delta \end{array}\right]=\left[\begin{array}{@{}c@{}} f^\delta \\ g^\delta \end{array}\right],
$$
where $(f^\delta,g^\delta) \in (Y^\delta)' \times (X^\delta)'$.
The method we are considering is a generalization of the well-known inexact Uzawa iteration for linear (generalized) saddle point problems (e.g.~see \cite{34.67,322,234.7}) to the nonlinear saddle point problem \eqref{eq:Galsystem}. The adjective `inexact' refers to the fact that in these iterations the exact inverse of the `left upper block' (in our case the nonlinear operator $A_Y^\delta$)
is avoided.

From Proposition~\ref{prop:equivSchur} recall that \eqref{eq:Galsystem} is equivalent to 
$$
S^\delta u^\delta=0,\quad A_Y^\delta \theta^\delta =f^\delta-\diff_t^\delta u^\delta,
$$
where for $z \in X^\delta$, $S^\delta z  :=A_X^\delta z+(\gamma_T^\delta)'\gamma_T^\delta z+g^\delta-(\diff_t^\delta)' (A_Y^\delta)^{-1}(f^\delta-\diff_t^\delta z)$. Further, recall that $A_Y^\delta \colon Y^\delta \rightarrow (Y^\delta)'$ is Lipschitz continuous and strongly monotone with constants $L_A$ and $m_A$, and as shown in Lemma~\ref{lem:Schur},
$S^\delta\colon X^\delta \rightarrow (X^\delta)'$ is Lipschitz continuous and strongly monotone with constants $L_S$ and $m_S$.

Recalling our discussion of the Zaratonello iteration in Sect.~\ref{sec:LipMono}, these properties of  $A_Y^\delta$ and $S^\delta$ imply that with
$$
\zeta^*_S:=\frac{m_S}{L_S^2},\quad \sigma_S:=\sqrt{1-\frac{m_S^2}{L_S^2}},\quad \zeta^*_A:=\frac{m_A}{L_A^2},\quad \sigma_A:=\sqrt{1-\frac{m_A^2}{L_A^2}},
$$
and the Riesz maps $R_{X^\delta}\colon X^\delta \rightarrow (X^\delta)'$  and $R_{Y^\delta}\colon Y^\delta \rightarrow (Y^\delta)'$, for $z \in X^\delta$ it holds that
\begin{align*}
\| u^\delta-\big(z-\zeta^*_S R_{X^\delta}^{-1} S^\delta z\big)\|_{X^\delta} & \leq \sigma_S \|u^\delta-z\|_{X^\delta},
\intertext{and with $\mu^\delta:=(A_Y^\delta)^{-1}(f^\delta -\diff_t^\delta z)$ for some $z \in X^\delta$, for $\xi \in Y^\delta$ it holds that}
\| \mu^\delta-\big(\xi-\zeta^*_A R_{Y^\delta}^{-1} \big[A_Y^\delta \xi-(f^\delta - \diff_t^\delta z)\big]\big)\|_{Y^\delta} &\leq \sigma_A \|\mu^\delta - \xi\|_{Y^\delta}.
\end{align*}

A problem is that given an approximation $z$ for $u^\delta$, the improved approximation provided by the exact Zaratonello iterand $z-\zeta^*_S R_{X^\delta}^{-1} S^\delta z$ is not computable because of the presence of the term $(A_Y^\delta)^{-1}(f^\delta-\diff_t^\delta z)$ in the expression for $S^\delta z$. The idea of the inexact Uzawa iteration is to approximate this term, with a sufficient accuracy, by the application of a few iterations of some iterative method, in our case Zaratonello, for approximating $(A_Y^\delta)^{-1}(f^\delta-\diff_t^\delta z)$. The starting value of these iterations  is the analogously previously computed approximation of $(A_Y^\delta)^{-1}(f^\delta-\diff_t^\delta \widehat{z})$, with 
$\widehat{z}$ being the previous approximation for $u^\delta$.

The resulting algorithm reads as follows.

\begin{algorithm}[Inexact Uzawa] \label{alg:InexactUzawa}. 
{\rm%
\begin{algotab}
\> Let $L \in \N$ be a constant that will be specified in Theorem~\ref{thm:Uzawa}.\\
\> Let $(\theta^{(0)},u^{(0)}) \in Y^\delta \times X^\delta$ be given. \\
\> \texttt{for} \= $k=0,1,\ldots$ \texttt{do}\\
\>\> $\theta_0^{(k+1)}:=\theta^{(k)}$\\
\>\> \texttt{for} \= $i=0,1,\ldots L-1$ \texttt{do}\\
\>\>\> $\theta_{i+1}^{(k+1)}:=\theta_{i}^{(k+1)}- \zeta^*_A R_{Y^\delta}^{-1} \big[A_Y^\delta \theta_{i}^{(k+1)}-(f^\delta - \diff_t^\delta u^{(k)})\big]\big)$\\
\>\> \texttt{endfor} \\
\>\>  $\theta^{(k+1)}:=\theta_L^{(k+1)}$\\
\>\>  $u^{(k+1)}:=u^{(k)}-\zeta_S^* R_{X^\delta}^{-1}\big[A_X^\delta u^{(k)} +(\gamma_T^\delta)'\gamma_T^\delta u^{(k)}+g^\delta-(\diff_t^\delta)'\theta^{(k+1)}\big]$\\
\> \texttt{endfor}
\end{algotab}}
\end{algorithm}

\begin{theorem}[Convergence and a priori error estimate] \label{thm:Uzawa}
For $\widehat{\sigma}_S \in (\sigma_S,1)$, let $C_3:=\frac{1}{\widehat{\sigma}_S}\big(\frac{\widehat{\sigma}_S-\sigma_S}{\zeta_S^*}+\frac{1}{m_A}\big)$, and let $L \in \N$ be such that 
$\sigma_A^L(C_3+\frac{1}{m_A}) \leq \frac{\widehat{\sigma}_S-\sigma_S}{\zeta_S^*}$. Then with $C_4:=\max\{\frac{1}{C_3}\|\theta^\delta-\theta^{(0)}\|_{Y^\delta},\|u^\delta-u^{(0)}\|_{X^\delta}\}$, it holds that
\be \label{eq:uzawa}
\tfrac{1}{C_3} \|\theta^\delta-\theta^{(k)}\|_{Y^\delta} \leq \widehat{\sigma}_S^k C_4, \quad
\|u^\delta-u^{(k)}\|_{X^\delta} \leq \widehat{\sigma}_S^k C_4.
\ee
\end{theorem}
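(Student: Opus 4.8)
The plan is to track the two error quantities $e_\lambda^{(k)} := \|\lambda^\delta - \lambda^{(k)}\|_{Y^\delta}$ and $e_u^{(k)} := \|u^\delta - u^{(k)}\|_{X^\delta}$ simultaneously and show by induction on $k$ that the bounds in \eqref{eq:uzawa} hold, i.e.\ $\tfrac{1}{C_3} e_\lambda^{(k)} \le \widehat{\sigma}_S^k C_4$ and $e_u^{(k)} \le \widehat{\sigma}_S^k C_4$. The base case $k=0$ is immediate from the definition of $C_4$. For the inductive step, the key is to establish two one-step recursions: one controlling the error after the inner Zarantonello loop (the $\lambda$-update), and one controlling the error after the outer Zarantonello step (the $u$-update), each in terms of the previous errors.

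First I would analyze the inner loop. Fix $k$ and abbreviate $\mu := (A_Y^\delta)^{-1}(f^\delta - \diff_t^\delta u^{(k)})$, the exact solution of the inner problem for the current outer iterate $u^{(k)}$. The contraction property of the Zarantonello iteration recalled in Sect.~\ref{sec:LipMono} gives, after $L$ steps starting from $\lambda^{(k)}$,
\[
\|\mu - \lambda^{(k+1)}\|_{Y^\delta} \le \sigma_A^L \|\mu - \lambda^{(k)}\|_{Y^\delta}.
\]
Next I would relate $\mu$ to $\lambda^\delta$: since $\lambda^\delta = (A_Y^\delta)^{-1}(f^\delta - \diff_t^\delta u^\delta)$ and $(A_Y^\delta)^{-1}$ is Lipschitz with constant $\tfrac{1}{m_A}$ while $\|(\diff_t^\delta)'\|_{Y^\delta\to(X^\delta)'}\le 1$ (equivalently $\|\diff_t^\delta\|_{X^\delta\to(Y^\delta)'}\le 1$), one gets $\|\lambda^\delta - \mu\|_{Y^\delta} \le \tfrac{1}{m_A} e_u^{(k)}$. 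Similarly $\|\mu - \lambda^{(k)}\|_{Y^\delta} \le e_\lambda^{(k)} + \tfrac{1}{m_A} e_u^{(k)}$ by inserting $\lambda^\delta$. Combining these via the triangle inequality yields a bound of the shape
\[
e_\lambda^{(k+1)} \le \sigma_A^L\big(e_\lambda^{(k)} + \tfrac{1}{m_A} e_u^{(k)}\big) + \tfrac{1}{m_A} e_u^{(k)}.
\]

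For the outer step, I would note that $u^{(k+1)} = u^{(k)} - \theta_S^* R_{X^\delta}^{-1} S^\delta u^{(k)}$ \emph{would} hold if $\lambda^{(k+1)}$ were exactly $\mu$; the actual update uses $\lambda^{(k+1)}$, so the discrepancy from the exact Zarantonello iterand is $\theta_S^* R_{X^\delta}^{-1}(\diff_t^\delta)'(\lambda^{(k+1)} - \mu)$, whose $X^\delta$-norm is at most $\theta_S^* \|\lambda^{(k+1)} - \mu\|_{Y^\delta} \le \theta_S^* \sigma_A^L(e_\lambda^{(k)} + \tfrac{1}{m_A} e_u^{(k)})$. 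Using the contraction $\|u^\delta - (u^{(k)} - \theta_S^* R_{X^\delta}^{-1} S^\delta u^{(k)})\|_{X^\delta} \le \sigma_S e_u^{(k)}$ from Sect.~\ref{sec:LipMono} and the triangle inequality gives
\[
e_u^{(k+1)} \le \sigma_S\, e_u^{(k)} + \theta_S^* \sigma_A^L\big(e_\lambda^{(k)} + \tfrac{1}{m_A} e_u^{(k)}\big).
\]
Now I would feed in the inductive hypotheses $e_\lambda^{(k)} \le C_3 \widehat{\sigma}_S^k C_4$ and $e_u^{(k)} \le \widehat{\sigma}_S^k C_4$ into both recursions, factor out $\widehat{\sigma}_S^k C_4$, and verify that the resulting constants are dominated by $C_3 \widehat{\sigma}_S^{k+1} C_4$ and $\widehat{\sigma}_S^{k+1} C_4$ respectively. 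This is precisely where the definitions $C_3 = \tfrac{1}{\widehat{\sigma}_S}(\tfrac{\widehat{\sigma}_S - \sigma_S}{\theta_S^*} + \tfrac{1}{m_A})$ and the condition $\sigma_A^L(C_3 + \tfrac{1}{m_A}) \le \tfrac{\widehat{\sigma}_S - \sigma_S}{\theta_S^*}$ are engineered to make the bookkeeping close: the $u$-recursion becomes $e_u^{(k+1)} \le (\sigma_S + \theta_S^*\sigma_A^L(C_3 + \tfrac{1}{m_A}))\widehat{\sigma}_S^k C_4 \le (\sigma_S + (\widehat{\sigma}_S - \sigma_S))\widehat{\sigma}_S^k C_4 = \widehat{\sigma}_S^{k+1} C_4$, and the $\lambda$-recursion is checked analogously against $C_3 \widehat{\sigma}_S^{k+1} C_4$.

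The main obstacle I anticipate is the coupled nature of the induction: the $\lambda$-error and $u$-error feed into each other, so neither can be closed in isolation, and the constant $C_3$ serves as the \emph{ratio} between the two error scales that makes the joint recursion self-consistent. Getting the constants to line up — in particular checking that the $\lambda$-recursion, after substituting the hypotheses, gives exactly $\le C_3 \widehat{\sigma}_S^{k+1} C_4$ and not merely something bounded — requires carefully using both the definition of $C_3$ and the smallness condition on $L$; this is the delicate part, though still a routine (if fiddly) computation once the two one-step recursions above are in hand.
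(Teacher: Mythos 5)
Your proposal is correct and follows essentially the same inductive argument as the paper: you isolate the inner-loop contraction on $\|\mu-\lambda^{(k)}\|_{Y^\delta}$ (the paper writes $\mu=(A_Y^\delta)^{-1}(f^\delta-\diff_t^\delta u^{(k)})$ explicitly), insert $\lambda^\delta$ via the triangle inequality using $\|(A_Y^\delta)^{-1}\|_{\rm Lip}\le\tfrac1{m_A}$ and $\|\diff_t^\delta\|\le1$, and split the outer update into the exact Zarantonello step plus the $\theta_S^*\|\lambda^{(k+1)}-\mu\|_{Y^\delta}$ perturbation, just as in the paper. The definitions of $C_3$ and the condition on $L$ then close the coupled induction exactly as you describe.
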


\begin{proof} The statement is valid for $k=0$, and let it be valid for some $k \in \N$.
Then using $\|\diff_t^\delta\|_{X^\delta \rightarrow (Y^\delta)'} \leq 1$, we have
\begin{align*}
\|(A_Y^{\delta})^{-1}(f^\delta-&\diff_t^\delta u^{(k)})-\theta^{(k)}\|_{Y^\delta}\\ &\leq
\|\theta^\delta-\theta^{(k)}\|_{Y^\delta}+\|(A_Y^{\delta})^{-1}(f^\delta-\diff_t^\delta u^\delta)-(A_Y^{\delta})^{-1}(f^\delta-\diff_t^\delta u^{(k)})\|_{Y^\delta}\\
&\leq C_3 \widehat{\sigma}_S^k C_4+ \tfrac{1}{m_A}  \widehat{\sigma}_S^k C_4,
\end{align*}
and so
$$
\|(A_Y^{\delta})^{-1}(f^\delta-\diff_t^\delta u^{(k)})-\theta^{(k+1)}\|_{Y^\delta} \leq \sigma_A^L \big(C_3+\tfrac{1}{m_A}\big) \widehat{\sigma}_S^k C_4.
$$
We infer that
\begin{align*}
\|\theta^\delta - \theta^{(k+1)}\|_{Y^\delta}  \leq& \|(A_Y^{\delta})^{-1}(f^\delta-\diff_t^\delta u^{(k)} )-\theta^{(k+1)}\|_{Y^\delta}\\
&+\|(A_Y^{\delta})^{-1}(f^\delta-\diff_t^\delta u^{\delta})-(A_Y^{\delta})^{-1}(f^\delta-\diff_t^\delta u^{(k)})\|_{Y^\delta}\\
 \leq &(\sigma_A^L(C_3 + \tfrac{1}{m_A})+\tfrac{1}{m_A})\widehat{\sigma}_S^k C_4 \leq C_3 \widehat{\sigma}_S^{k+1} C_4,
\end{align*}
and
\begin{align*}
\|u^\delta-u^{(k+1)}\|_{X^\delta} \leq & \|u^\delta-\big(u^{(k)}-\zeta_S^* R_{X^\delta}^{-1} S^\delta u^{(k)}\big)\|_{X^\delta}
\\
&+\zeta_S^* \|(\diff_t^\delta)'\big[(A_Y^\delta)^{-1}(f^\delta-\diff_t^\delta u^{(k)})-\theta^{(k+1)}\big]\|_{(X^\delta)'}\\
 \leq & \sigma_S \|u^\delta-u^{(k)}\|_{X^\delta}+ \zeta_S^* \sigma_A^L(C_3 + \tfrac{1}{m_A}) \widehat{\sigma}_S^k C_4\\
 \leq & \big(\sigma_S+\zeta_S^* \sigma_A^L(C_3 + \tfrac{1}{m_A})\big) \widehat{\sigma}_S^k C_4\\
\leq &\widehat{\sigma}_S^{k+1} C_4. \qedhere
\end{align*}
\end{proof}

\begin{remark} Inexact Uzawa type algorithms for classes of nonlinear systems can be found in the literature. We are, however, not aware of an analysis similar to Theorem~\ref{thm:Uzawa} of its convergence exclusively in terms of the Lipschitz and monotonicity constants of $A_Y^\delta$ and `generalized Schur complement' $S^\delta$, analogously to the analysis in \cite{34.67} for the linear case.
\end{remark}

\begin{remark} For quasi-linear parabolic PDEs it might be beneficial to replace the inner Zarantonello iteration by a faster converging iteration like the Ka\v{c}anov iteration, or the (damped) Newton iteration (see \cite{137,137.6}). With those iterations, however, the linear operator that has to be inverted changes per iteration, which, in any case with the application of direct linear solvers, strongly inflates the cost. Due to the structure of the operator $S^\delta$, it is less obvious to find an alternative for the outer Zarantonello iteration.
\end{remark}

\subsection{A posteriori error estimation}
The a priori upper bounds \eqref{eq:uzawa} for the error in $(\theta^{(k)},u^{(k)})$ can be expected to be increasingly pessimistic for $k \rightarrow \infty$.
We therefore provide an a posteriori computable error bound.

\begin{proposition}[A posteriori error estimate] \label{prop:3} Let $(\tilde{\theta}^\delta,\tilde{u}^\delta) \in Y^\delta \times X^\delta$ approximate the solution $(\theta^\delta,u^\delta)$ of \eqref{eq:Galsystem}. Then with
$$
\left[\begin{array}{@{}c@{}} r_Y^\delta \\ r_X^\delta \end{array}\right]
=\left[\begin{array}{@{}c@{}} r_Y^\delta(\tilde{\theta}^\delta,\tilde{u}^\delta)  \\ r_X^\delta(\tilde{\theta}^\delta,\tilde{u}^\delta)  \end{array}\right]
:=\left[\begin{array}{@{}c@{}} f^\delta \\ g^\delta \end{array}\right] -\NN^\delta \left[\begin{array}{@{}c@{}} \tilde{\theta}^\delta \\ \tilde{u}^\delta \end{array}\right],
$$
for $(\tilde{\theta}^\delta,\tilde{u}^\delta) \neq (\theta^\delta,u^\delta)$ it holds that
$$
L_N^{-1} \leq
 \frac{\|\theta^\delta-\tilde{\theta}^\delta\|_{Y^\delta}+\|u^\delta-\tilde{u}^\delta\|_{X^\delta}}
{\sqrt{r_Y^\delta(R_{Y^\delta}^{-1}r_Y^\delta)}+\sqrt{r_X^\delta(R_{X^\delta}^{-1}r_X^\delta)}} \leq L_{N^{-1}}.
$$
\end{proposition}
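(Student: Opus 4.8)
The plan is to recognize the quotient in the statement as the ratio of $\|N^\delta \vec u^\delta - N^\delta \vec{\tilde u}^\delta\|$, measured in the dual norm on $(Y^\delta)'\times(X^\delta)'$, to $\|\vec u^\delta - \vec{\tilde u}^\delta\|$ on $Y^\delta\times X^\delta$, after which the two-sided bound is immediate from Proposition~\ref{prop:Lip} and Corollary~\ref{corol:stability}. The first step is the elementary observation that for a Hilbert space $Z$ with Riesz map $R_Z$ and any $r\in Z'$, the Riesz representative $R_Z^{-1}r\in Z$ satisfies
$$
\|r\|_{Z'}^2=\|R_Z^{-1}r\|_Z^2=\langle R_Z^{-1}r,R_Z^{-1}r\rangle_Z=r(R_Z^{-1}r),
$$
so that $\sqrt{r(R_Z^{-1}r)}=\|r\|_{Z'}$. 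Applying this with $Z=Y^\delta$ and with $Z=X^\delta$ shows that the denominator in the claimed estimate equals $\|r_Y^\delta\|_{(Y^\delta)'}+\|r_X^\delta\|_{(X^\delta)'}$, which, by the product-norm convention fixed in the Notations, is exactly $\|(r_Y^\delta,r_X^\delta)\|_{(Y^\delta)'\times(X^\delta)'}$.

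Next I would use that $(\lambda^\delta,u^\delta)$ solves \eqref{eq:Galsystem}, i.e.\ $N^\delta(\lambda^\delta,u^\delta)=(f^\delta,g^\delta)$, so that by definition of the residual $(r_Y^\delta,r_X^\delta)=N^\delta(\lambda^\delta,u^\delta)-N^\delta(\tilde\lambda^\delta,\tilde u^\delta)$. Hence the quotient in the statement is precisely
$$
\frac{\|(\lambda^\delta,u^\delta)-(\tilde\lambda^\delta,\tilde u^\delta)\|_{Y^\delta\times X^\delta}}{\|N^\delta(\lambda^\delta,u^\delta)-N^\delta(\tilde\lambda^\delta,\tilde u^\delta)\|_{(Y^\delta)'\times(X^\delta)'}},
$$
which is well defined since $(\tilde\lambda^\delta,\tilde u^\delta)\neq(\lambda^\delta,u^\delta)$ and $N^\delta$ is injective by Corollary~\ref{corol:stability}. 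The upper bound then follows by applying the Lipschitz continuity of $(N^\delta)^{-1}$ with constant $L_{N^{-1}}$ (Corollary~\ref{corol:stability}) to the data $N^\delta(\lambda^\delta,u^\delta)$ and $N^\delta(\tilde\lambda^\delta,\tilde u^\delta)$, bounding the numerator by $L_{N^{-1}}$ times the denominator; the lower bound follows by applying the Lipschitz continuity of $N^\delta$ with constant $L_N=L_\NN$ (Proposition~\ref{prop:Lip}), bounding the denominator by $L_N$ times the numerator, which rearranges to the inequality $L_N^{-1}\le$ quotient.

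I do not expect a genuine obstacle here: the only non-routine ingredient is the identity $\|r\|_{Z'}=\sqrt{r(R_Z^{-1}r)}$, and once the quotient has been rewritten in terms of $N^\delta$ the estimate is a direct consequence of the two already-established Lipschitz bounds on $N^\delta$. The one point requiring a little care is to keep consistent use of the additive product-norm convention $\|(x,y)\|=\|x\|_{\mathcal X}+\|y\|_{\mathcal Y}$, so that the sums appearing in both the numerator and the denominator of the displayed ratio match the norms on $Y^\delta\times X^\delta$ and $(Y^\delta)'\times(X^\delta)'$ to which Proposition~\ref{prop:Lip} and Corollary~\ref{corol:stability} apply.
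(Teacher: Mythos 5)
Your proposal is correct and is essentially the same argument as the paper's proof: rewrite the residual as $N^\delta(\lambda^\delta,u^\delta)-N^\delta(\tilde\lambda^\delta,\tilde u^\delta)$, apply the Lipschitz bounds on $N^\delta$ and $(N^\delta)^{-1}$ from Proposition~\ref{prop:Lip} and Corollary~\ref{corol:stability}, and invoke the Riesz-map identity $\|r_Z^\delta\|_{(Z^\delta)'}^2=r_Z^\delta(R_{Z^\delta}^{-1}r_Z^\delta)$ to identify the denominator.
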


\begin{proof} Applications of Proposition~\ref{prop:Lip} and Corollary~\ref{corol:stability} show that
$$
L_N^{-1} \leq
 \frac{\|\theta^\delta-\tilde{\theta}^\delta\|_{Y^\delta}+\|u^\delta-\tilde{u}^\delta\|_{X^\delta}}
{\|r_Y^\delta\|_{(Y^\delta)' }+\|r_X^\delta\|_{(X^\delta)'}}\leq L_{N^{-1}},
$$
whereas for $Z\in \{Y,X\}$, $\|r_Z^\delta\|_{(Z^\delta)'}^2=\|(R_Z^{\delta})^{-1} r_Z^\delta\|_{Z^\delta}^2=r_Z^\delta((R_Z^{\delta})^{-1} r_Z^\delta)$.
\end{proof}

When we apply this a posteriori error estimate to $(\tilde{\theta}^\delta,\tilde{u}^\delta)=(\theta^{(k+1)},u^{(k)})$ produced in the inexact Uzawa iteration, it requires computing
$$
r_Y^\delta:=r_Y^\delta(\theta^{(k+1)},u^{(k)}),\quad R_{Y^\delta}^{-1}(r_Y^\delta),\quad  r_X^\delta:=r_X^\delta(\theta^{(k+1)},u^{(k)}), \quad R_{X^\delta}^{-1}(r_X^\delta),
$$
as well as  $r_Y^\delta\big(R_{Y^\delta}^{-1}(r_Y^\delta)\big)$ and $r_X^\delta\big(R_{X^\delta}^{-1}(r_X^\delta)\big)$. The computation of the latter two boils down to computing 
the inner product between the vectors that represent the residual and the `preconditioned' residual.

If the a posteriori error estimate shows an insufficient accuracy of $(\theta^{(k+1)},u^{(k)})$, then $u^{(k+1)}$ is obtained as $u^{(k)}-\zeta^* R_{X^\delta}^{-1}(r_X^\delta)$, with both terms already being available. For the computation of $\theta_1^{(k+1)}$  in the next inner iteration one saves the computation of $A_Y^\delta \theta^{(k+1)}-f^\delta$, because it was already computed for $r_Y^\delta$. In other words, monitoring the error in $(\theta^{(k+1)},u^{(k)})$ while performing the inexact Uzawa iteration adds a small percentage to its cost, whereas it allows to stop as soon as the desired accuracy is attained.
\medskip

\subsection{The application of the inverse Riesz maps}
The execution of the inexact Uzawa algorithm as well as the computation of the a posteriori error estimate requires recurrent applications of $R_{Y^\delta}^{-1}$ and $R_{X^\delta}^{-1}$, which we discuss now.
It holds that $R_{Y^\delta}=(E_Y^\delta)' R_Y E_Y^\delta$. So for $h^\delta \in (Y^\delta)'$, $v^\delta=R_{Y^\delta}^{-1} h^\delta$ is the solution of the discretized (linear) coercive problem
$$
\langle E_Y^\delta v^\delta,E_Y^\delta z^\delta\rangle_Y=h^\delta(z^\delta)\quad  (z^\delta \in Y^\delta). 
$$

The definition of $\|\bigcdot\|_{X^\delta}$ shows that
$$
R_{X^\delta}=(E_X^\delta)' R_Y E_X^\delta+(\gamma_T^\delta)' \gamma_T^\delta+(\diff_t^\delta)'((E_Y^\delta)' R_Y E_Y^\delta)^{-1} \diff_t^\delta,
$$
so that
$$
(R_{X^\delta})^{-1}=-
\left[\begin{array}{@{}cc@{}} 0 & \identity \end{array} \right]
\left[\begin{array}{@{}cc@{}} (E_Y^\delta)' R_Y E_Y^\delta & \diff_t^\delta\\ (\diff_t^\delta)'& -((E_X^\delta)' R_Y E_X^\delta+(\gamma^\delta_T)' \gamma^\delta_T)\end{array}\right]^{-1}
\left[\begin{array}{@{}c@{}} 0 \\ \identity \end{array}  \right].
$$
So for $h^\delta \in (X^\delta)'$, $v^\delta=R_{X^\delta}^{-1} h^\delta$ is found as the second component of the solution
of a discretized (linear) parabolic problem in the form
$$
\left[\begin{array}{@{}cc@{}} (E_Y^\delta)' R_Y E_Y^\delta & \diff_t^\delta\\ (\diff_t^\delta)'& -((E_X^\delta)' R_Y E_X^\delta+(\gamma^\delta_T)' \gamma^\delta_T)\end{array}\right]
\left[\begin{array}{@{}c@{}}  \varsigma^\delta \\ v^\delta \end{array}\right]=\left[\begin{array}{@{}c@{}}  0 \\ -h^\delta \end{array}\right].
$$

\subsection{The application of optimal preconditioners}
For $Z$ reading as either $Y$ or $X$, let $\tilde{R}_{Z^\delta}\colon Z^\delta \rightarrow (Z^\delta)'$  be such that $(\tilde{R}_{Z^\delta} \bigcdot)(\bigcdot)^\frac12$  is a scalar product on $Z^\delta \times Z^\delta$ whose induced norm is equivalent, uniformly in $\delta \in \Delta$, to $\|\bigcdot\|_{Z^\delta}$, and such that $\tilde{R}_{Z^\delta}^{-1}$, known as a \emph{preconditioner}, can be applied `efficiently', in particular preferably at linear cost.
Then the application of the exact inverses of $R_{Y^\delta}$ and $R_{X^\delta}$ in the inexact Uzawa algorithm can be \emph{replaced} by the application of these preconditioners $\tilde{R}_{Y^\delta}$ and $\tilde{R}_{X^\delta}$.
After making obvious adaptations to the constants $m_A$, $L_A$ and $m_S$, $L_S$ that account for the replacement of the norms $\|\bigcdot\|_{Y^\delta}$ and $\|\bigcdot\|_{X^\delta}$ by $(\tilde{R}_{Y^\delta} \bigcdot)(\bigcdot)^\frac12$ and $(\tilde{R}_{X^\delta} \bigcdot)(\bigcdot)^\frac12$, respectively, the analysis of the resulting algorithm applies without modification.

\begin{remark} The above is an alternative for the \emph{approximate} application of the inverse Riesz operator by means of an \emph{inner iteration} that involves the repeated application of a preconditioner. The latter has been analyzed in the literature in the framework of numerically solving quasi-linear elliptic PDEs in, e.g.~\cite{81}.
\end{remark}

The issue of the construction of optimal preconditioners $(\tilde{R}_{Z^\delta}^{-1})_{\delta \in \Delta}$ depends on $(Z^\delta)_{\delta \in \Delta} \subset Z$, but not on the operators $A^\delta_Y$ or $S^\delta$, and so it is the same issue as studied earlier in the literature in the linear case.

For the tensor product setting from Example~\ref{ex:tensor}, in Sect.~\ref{sec:precond} we recall a construction of optimal preconditioners  based on ideas from \cite{12.5} (see also \cite{234.7}, \cite[Sect.~5.6]{249.991}).

\section{Conclusion and outlook} \label{sec:8}
For $V \hookrightarrow H \simeq H' \hookrightarrow V'$, $Y:=L_2(\Xi;V)$, $X:=Y \cap H^1(\Xi;V')$, $A \colon Y \rightarrow Y'$ Lipschitz continuous and strongly monotone, and data $(\ell,u_0) \in Y' \times H$, 
$$
\left[\begin{array}{@{}cc@{}} A & \diff_t\\ \diff_t'& -(A+\gamma_T' \gamma_T)\end{array}\right]
\left[\begin{array}{@{}c@{}} \theta \\ u \end{array}\right]=
\left[\begin{array}{@{}c@{}} \ell \\ -(\ell+\gamma_0' u_0) \end{array}\right].
$$
is an equivalent formulation of the IVP in time-space variational form $\diff_t u+Au =\ell $, $\gamma_0 u=u_0$, where $\theta=u$.

For families $(Y^\delta,X^\delta)_{\delta \in \Delta}$ of closed subspaces $Y^\delta \supset X^\delta$ of $Y$ and $X$ that satisfy a uniform inf-sup condition, the $u^\delta$-component of the Galerkin approximation $(\theta^\delta,u^\delta)$ for $(\theta,u)$ is a quasi-best approximation of the solution $u$ w.r.t.~$\|\bigcdot\|_X$.
These Galerkin approximations can be numerically computed by an inexact Uzawa type algorithm.

The uniform inf-sup condition has been demonstrated for finite element spaces $Y^\delta$ and $X^\delta$ w.r.t.~to partitions of the time-space cylinder that allow a subdivision into time-slabs. To circumvent this restriction, an a posteriori condition for quasi-optimality of $u^\delta$ has been derived on the size of $Y^\delta$ in relation to that of $X^\delta$.

The computational evaluation of this a posteriori condition requires an a posteriori estimator for the error in the Galerkin approximation from $Y^\delta$ of the solution $\theta_\delta \in Y$ of $A \theta_\delta=\ell-d_t u^\delta$.
The derivation of such an estimator for $A$ resulting from a quasi-linear parabolic PDE, and the presentation of numerical results for the resulting double-adaptivity algorithm will be postponed to forthcoming work.

\appendix
\section{Optimal preconditioners in the tensor product setting} \label{sec:precond}
For a domain $\Omega \subset \R^d$, let $H=L_2(\Omega)$, $V=H^1_0(\Omega)$, equipped with $\|\nabla \bigcdot\|_{L_2(\Omega)^d}$.
We set the Poincar\'{e}-Friedrich constant by $C_{\text{PF}}:=\sup_{0 \neq v \in H^1_0(\Omega)} \frac{\|v\|_{L_2(\Omega)}}{\|\nabla v\|_{L_2(\Omega)^d}}$.
For $\delta \in \Delta \setminus \delta_*$, let $X^\delta=X_t^\delta \otimes X_x^\delta$, $Y^\delta=Y_t^\delta \otimes X_x^\delta$ with $\gamma_\Delta\geq \gamma_{\Delta,t}\gamma_{\Delta,x}>0$ (cf.~\eqref{eq:tensor}).

To construct an optimal preconditioner for the Riesz map $X^\delta \rightarrow (X^\delta)'$, on $X^\delta$ we define the alternative (squared) norm
$$
\nrm \bigcdot\nrm_{X^\delta}^2:=\|E_X^\delta\bigcdot\|_Y^2+\|E_X^\delta\bigcdot\|_{H^1(\Xi) \otimes (X_x^\delta)'}^2,
$$
where $\|\bigcdot\|_{(X_x^\delta)'}:=\sup_{0 \neq v_x \in X_x^\delta} \frac{\langle\bigcdot,v_x\rangle_{L_2(\Omega)}}{\|\nabla v_x\|_{L_2(\Omega)^d}}$.
The scalar products corresponding to $\nrm \bigcdot\nrm_{X^\delta}$ and $\|\bigcdot\|_{(X_x^\delta)'}$ will be denoted by
 $\langle\!\langle \bigcdot,\bigcdot\rangle\!\rangle_{X^\delta}$ and $\langle\bigcdot,\bigcdot\rangle_{(X_x^\delta)'}$.

From
\begin{align*}
\|\bigcdot\|^2_{H^1(\Xi)\otimes (X_x^\delta)'} \leq \|\bigcdot\|^2_{H^1(\Xi)\otimes H^{-1}(\Omega)} &=
\|\bigcdot\|^2_{L_2(\Xi)\otimes H^{-1}(\Omega)}+\|\diff_t \bigcdot\|^2_{Y'}\\
& \leq C_{\text{PF}}^4 \|\bigcdot\|_Y^2+\|\diff_t \bigcdot\|^2_{Y'},
\end{align*}
we have
$$
\tfrac{\nrm\bigcdot\nrm_{X^\delta}^2}{1+C_{\text{PF}}^4}
\leq
\|E_X^\delta \bigcdot\|_Y^2+\|\diff_t E_X^\delta \bigcdot\|^2_{Y'}+\|\gamma_T E_X^\delta \bigcdot\|^2_{L_2(\Omega)}
=\|E_X^\delta \bigcdot\|^2_X \leq \tfrac{\|\bigcdot\|_{X^\delta}^2}{\gamma_\Delta^2}.
$$
On the other hand, with $C_\Xi$ defined in \eqref{eq:embedding},
\begin{align*}
\nrm\bigcdot\nrm_{X^\delta}^2 &\geq \|E_X^\delta\bigcdot\|_Y^2+\|\diff_t E_X^\delta\bigcdot\|_{L_2(\Xi) \otimes (X_x^\delta)'}^2 \geq \gamma_{\Delta,x}^2( \|E_X^\delta\bigcdot\|_Y^2+\|\diff_t E_X^\delta\bigcdot\|_{Y'}^2)\\
& \geq \tfrac{\gamma_{\Delta,x}^2}{1+C_\Xi^2} \|E_X^\delta\bigcdot\|_X^2 \geq \tfrac{\gamma_{\Delta,x}^2}{1+C_\Xi^2} \|\bigcdot\|_{X^\delta}^2,
\end{align*}
so that $\|\bigcdot\|_{X^\delta}$ and $\nrm\bigcdot\nrm_{X^\delta}$ are equivalent norms on $X^\delta$, uniformly in $\delta \in \Delta$.
\medskip

Let $R_{X^\delta}\colon X^\delta \rightarrow (X^\delta)'$ be the Riesz operator for $X^\delta$ when equipped with $\langle\!\langle \bigcdot,\bigcdot\rangle\!\rangle_{X^\delta}$.
Let $X^\delta$ be equipped with basis $\Phi^\delta$, formally viewed as a (column) vector, and $(X^\delta)'$ with the corresponding dual basis.
With $\cF_\delta\colon (X^\delta)' \rightarrow \R^{\dim X^\delta} \colon f \mapsto f(\Phi^\delta)$, and its adjoint $\cF_\delta'\colon \R^{\dim X^\delta} \rightarrow X^\delta\colon \vec{w} \mapsto \vec{w} \cdot \Phi^\delta$,
the \emph{matrix} ${\bf R}_X^\delta:=\cF_\delta R_{X^\delta} \cF_\delta'$ that represents $R_{X^\delta}$ is symmetric and positive definite. For ${\bf w}, {\bf v} \in \R^{\dim X^\delta}$, 
with $w:=\cF_\delta' {\bf w}$ and $v:=\cF_\delta' {\bf v}$, it holds that 
${\bf R}_X^\delta {\bf w} \cdot {\bf v} = (R_{X^\delta} w)(v)=\langle\!\langle w , v\rangle\!\rangle_{X^\delta}$.
For a symmetric and positive definite matrix
 ${\bf \tilde{R}}_X^\delta$, set $\tilde{R}_{X^\delta}:=\cF_\delta^{-1} {\bf \tilde{R}}_X^\delta (\cF_\delta')^{-1}$. Then
 $\frac{{\bf R}_X^\delta {\bf w} \cdot {\bf w}}{{\bf \tilde{R}}_X^\delta {\bf w} \cdot {\bf w}}=\frac{(R_{X^\delta} w)(w)}{(\tilde{R}_{X^\delta} w)(w)}$.
 So finding an optimal preconditioner for $R_{X^\delta}$ is equivalent to finding a matrix that is (uniformly) spectrally equivalent to ${\bf R}_X^\delta$, and whose inverse can be applied in linear complexity.
 
 Choosing $\Phi^\delta$ of tensor form $\Phi_t^\delta \otimes \Phi_x^\delta$, the matrix ${\bf R}_X^\delta$ is of the form
 $$
{\bf R}_X^\delta=  {\bf M}_t^\delta \otimes {\bf A}_x^\delta+({\bf M}_t^\delta + {\bf A}^\delta_t)\otimes {\bf M}_x^\delta ({\bf A}_x^\delta)^{-1}{\bf M}_x^\delta,
$$
where 
 ${\bf M}_t^\delta$ is the temporal mass matrix, 
 ${\bf A}_t^\delta$ is the temporal stiffness matrices,
 ${\bf M}_x^\delta$ is the spatial mass matrix, and 
  ${\bf A}_x^\delta$ is the spatial stiffness matrix, and where we used that
  $\|\bigcdot\|_{(X_x^\delta)'}=\|({\bf  A}_x^\delta)^{-\frac12} {\bf M}_x (\cF_\delta')^{-1}\bigcdot\|$.
  
In view of an efficient evaluation of operators, because of their local supports we have in mind $\Phi_t^\delta$ and $\Phi_x^\delta$ to be common nodal bases of finite element spaces.
Let $\Psi_t^\delta$ be an \emph{alternative} basis for $\Span \Phi_t^\delta$ that is (uniformly) stable  w.r.t.~\emph{both} $L_2(\Xi)$- and $H^1(\Xi)$-norm.
With $\widehat{\bf M}_t^\delta$ and  $\widehat{\bf A}_t^\delta$ denoting its corresponding mass and stiffness matrices, this means that 
 both $\widehat{\bf M}_t^\delta$ and $\widehat{\bf M}_t^\delta +\widehat{\bf A}^\delta_t$ are uniformly spectrally equivalent to their diagonals ${\bf D}^\delta_{t,1} \leq {\bf D}^\delta_{t,2}$. W.l.o.g.~we assume that ${\bf D}^\delta_{t,1} =\identity$.
 With $\widehat{\bf R}_X^\delta :=  \widehat{\bf M}_t^\delta \otimes {\bf A}_x^\delta+(\widehat{\bf M}_t^\delta + \widehat{\bf A}^\delta_t)\otimes {\bf M}_x^\delta ({\bf A}_x^\delta)^{-1}{\bf M}_x^\delta$, we conclude that with $\alpha_\psi:=\|\psi\|_{H^1(\Xi)}$, $\widehat{\bf R}_X^\delta$ is 
(uniformly) spectrally equivalent to
 $$
 \text{blockdiag}[{\bf A}_x^\delta+\alpha_\psi^2 {\bf M}_x^\delta ({\bf A}_x^\delta)^{-1}{\bf M}_x^\delta]_{\psi \in \Psi_t^\delta}.
 $$

 Since both ${\bf M}_x^\delta$ and ${\bf A}_x^\delta$ are symmetric positive definite, \cite[Thm.~4]{242.817} shows that
 $$
 \tfrac12 \big({\bf A}_x^\delta\!+\!\alpha_\phi^2 {\bf M}_x^\delta ({\bf A}_x^\delta)^{-1}{\bf M}_x^\delta\big)
 \!\leq\! 
 ({\bf A}_x^\delta\!+\!\alpha_\phi {\bf M}_x^\delta) ({\bf A}_x^\delta)^{-1} ({\bf A}_x^\delta\!+\!\alpha_\phi {\bf M}_x^\delta) 
 \!\leq\!
 {\bf A}_x^\delta\!+\!\alpha_\phi^2 {\bf M}_x^\delta ({\bf A}_x^\delta)^{-1}{\bf M}_x^\delta.
 $$
Consequently, if
 \be \label{eq:20}
 \widecheck{\bf R}_{\alpha_\psi}^\delta \eqsim {\bf A}_x^\delta+\alpha_\psi {\bf M}_x^\delta,
 \ee
then $(\widehat{\bf R}_X^\delta)^{-1}$ is 
(uniformly) spectrally equivalent to
 \be \label{eq:39}
  \text{blockdiag}[ (\widecheck{\bf R}_{\alpha_\psi}^\delta)^{-1} {\bf A}_x^\delta  (\widecheck{\bf R}_{\alpha_\psi}^\delta)^{-1}]_{\psi \in \Psi_t^\delta}.
 \ee
  
 Noticing that ${\bf A}_x^\delta+\alpha_\phi {\bf M}_x^\delta$ is the system matrix resulting from a diffusion-reaction problem, it is known that for $X_x^\delta$ being a finite element space, preconditioners $(\widecheck{\bf R}_{\alpha_\psi}^\delta)^{-1}$ for which \eqref{eq:20} is valid uniformly in $\alpha_\psi >0$ (and $\delta$), and whose application can be performed in linear complexity are provided by usual multigrid preconditioners \cite{241.1}. This holds true for finite element spaces w.r.t.~quasi-uniform meshes on domains $\Omega$ 
 where on $\{u \in H^1_0(\Omega)\colon \triangle u \in L_2(\Omega)\}$ it holds that
 $\|\Delta \bigcdot\|_{L_2(\Omega)} \eqsim \|\bigcdot\|_{H^2(\Omega)}$. It can be expected though that both these conditions are not necessary.
 
 Considering the construction of $\Psi_t^\delta$, wavelet collections are available that are (uniformly) stable  w.r.t.~both $L_2(\Xi)$- and $H^1(\Xi)$-norm, and that span 
 finite element spaces w.r.t.~partitions constructed by repeated uniform dyadic refinements (e.g.~see \cite[Fig.~2]{249.991} for the space of continuous piecewise linears). 
 With ${\bf T}^\delta$ denoting the wavelet-to-nodal basis transform, which can be applied in linear complexity, it holds that
 $\widehat{\bf R}_X^\delta=(({\bf T}^\delta)^\top \otimes \identity) {\bf R}_X^\delta ({\bf T}^\delta \otimes \identity)$. Knowing that $(\widehat{\bf R}_X^\delta)^{-1}$ is 
(uniformly) spectrally equivalent to the matrix in \eqref{eq:39}, we conclude that
 $$
( ({\bf T}^\delta) \otimes \identity)\,  \text{blockdiag}[ (\widecheck{\bf R}_{\alpha_\psi}^\delta)^{-1} {\bf A}_x^\delta  (\widecheck{\bf R}_{\alpha_\psi}^\delta)^{-1}]_{\psi \in \Psi_t^\delta}\,
( ({\bf T}^\delta)^\top \otimes \identity)
 $$ is the matrix representation of an optimal preconditioner for $R_X^\delta$.
 \medskip

Recalling that $Y^\delta=Y_t^\delta \otimes X_x^\delta$, the representation of the Riesz operator $R_{Y^\delta}\colon Y^\delta \rightarrow (Y^\delta)'$ is of the form
${\bf \widetilde{M}}_t^\delta \otimes {\bf A}_x^\delta$, where ${\bf A}_x^\delta$ is as above, and ${\bf \widetilde{M}}_t^\delta$ is the temporal mass matrix w.r.t.~the basis for $Y_t^\delta$. 
An optimal preconditioner for ${\bf A}_x^\delta$ is provided by multi-grid, whereas for a usual single scale basis for $Y^\delta_t$, $({\bf \widetilde{M}}_t^\delta)^{-1}$ can be applied exactly at linear cost.

\end{document}